\setlist[enumerate]{leftmargin=*}
\newtheorem{thm}{Theorem}[section]
\newtheorem{lem}[thm]{Lemma}
\newtheorem{prp}[thm]{Proposition}
\newtheorem{cor}[thm]{Corollary}
\theoremstyle{definition}
\newtheorem{rem}[thm]{Remark}
\newcommand{\statement}[3]{\begin{gather}\tag{#1}\label{#2}\parbox{\dimexpr\linewidth-4em}{#3}\end{gather}}
\newcommand{\CC}{\mathbb{C}}
\newcommand{\RR}{\mathbb{R}}
\newcommand{\NN}{\mathbb{N}}
\newcommand{\ZZ}{\mathbb{Z}}
\newcommand{\TT}{\mathbb{T}}
\newcommand{\Rpos}{{\mathbb{R}^+}}
\newcommand{\Rnon}{{\mathbb{R}^+_0}}
\newcommand{\meas}{\mu}
\newcommand{\Pmeas}{\sigma}
\newcommand{\GPmeas}{\varpi}
\newcommand{\tc}{:}
\DeclareMathOperator{\Span}{span}         
\newcommand{\opL}{\mathfrak{L}}           
\newcommand{\opML}{\mathfrak{M}}          
\newcommand{\Sz}{\mathcal{S}}             
\newcommand{\szP}{\mathcal{N}}                    
\newcommand{\Diff}{\mathfrak{D}}
\newcommand{\Hilb}{\mathcal{H}}
\newcommand{\HS}{\mathrm{HS}}
\newcommand{\irr}{\mathrm{irr}}
\newcommand{\Eigen}{\mathcal{P}}
\newcommand{\Val}{\vartheta}
\newcommand{\Four}{\mathcal{F}}
\newcommand{\Kern}{\mathcal{K}}
\newcommand{\group}[1]{\mathrm{#1}}       
\newcommand{\lie}{\mathfrak}
\newcommand{\Mot}{{\group{SE}(2)}}
\newcommand{\mot}{{\lie{se}(2)}}
\newcommand{\chr}{\mathbf{1}}
\numberwithin{equation}{section}
\begin{document}
\title[Sub-Laplacians on groups of polynomial growth]{Convolution kernels versus spectral multipliers for sub-Laplacians on groups of polynomial growth}

\author[A. Martini]{Alessio Martini}
\address[A. Martini]{School of Mathematics\\ University of Birmingham\\Edgbaston\\Birmingham B15 2TT \\ United Kingdom}
\email{a.martini@bham.ac.uk}
\author[F. Ricci]{Fulvio Ricci}
\address[F. Ricci]{Scuola Normale Superiore\\ Piazza dei Cavalieri 7\\56126 Pisa\\ Italy}
\email{fricci@sns.it}
\author[L. Tolomeo]{Leonardo Tolomeo}
\address[L. Tolomeo]{School of Mathematics \\The University of Edinburgh\\James Clerk Maxwell Building\\ King's Buildings \\ Edinburgh EH3 9JZ\\United Kingdom}
\email{L.Tolomeo@sms.ed.ac.uk}

\subjclass[2010]{Primary: 22E30, 43A32; Secondary: 22E25, 43A20}
\keywords{Schwartz class, sub-Laplacian, Lie group, Riemann--Lebesgue lemma}
\thanks{The first-named author is a member of the Gruppo Nazionale per l'Analisi Matematica, la Probabilit\`a e le loro Applicazioni (GNAMPA) of the Istituto Nazionale di Alta Matematica (INdAM). This work was partially supported by the EPSRC Grant ``Sub-Elliptic Harmonic Analysis'' (EP/P002447/1).}

\begin{abstract}
Let $\opL$ be a sub-Laplacian on a connected Lie group $G$ of polynomial growth. It is well known that, if $F : \RR \to \CC$ is in the Schwartz class $\Sz(\RR)$, then the convolution kernel $\Kern_{F(\opL)}$ of the operator $F(\opL)$ is in the Schwartz class $\Sz(G)$. Here we prove a sort of converse implication for a class of groups $G$ including all solvable noncompact groups of polynomial growth. We also discuss the problem whether integrability of $\Kern_{F(\opL)}$ implies continuity of $F$.
\end{abstract}

\maketitle

\section{Introduction}

Among the most fundamental and useful properties of the Fourier transformation $\Four$ on $\RR^n$ one can certainly include the \emph{invariance of the Schwartz class} (a function $f$ is in the Schwartz class of smooth and rapidly decaying functions on $\RR^n$ if and only if its Fourier transform $\Four f$ is in the Schwartz class) and the \emph{Riemann--Lebesgue Lemma} (if $f$ is an integrable function on $\RR^n$, then $\Four f$ is continuous and vanishing at infinity). It is natural to ask whether these properties hold in more general contexts than $\RR^n$, where an analogue of the Fourier transformation can be defined.

If $G$ is a connected Lie group of polynomial growth, then the Schwartz class on $G$ can be defined in a natural way. On the other hand, the group Fourier transformation, based on unitary representation theory, is a much more complicated construct, which is not immediately amenable to notions of smoothness (the group Fourier transform of an integrable function on $G$ is an operator-valued function on the ``dual object'' $\widehat{G}$, which in general has no differentiable manifold structure). Studies of the Schwartz class ``on the Fourier transform side'' are available in the literature for particular groups \cite{astengo_fourier_2013,geller_fourier_1977,geller_fourier_1980,ludwig_fourier_2007}, but developing a general theory appears to be a very difficult problem.

The matter simplifies in case one restricts to subclasses of functions on $G$ which are commutative under convolution, where scalar-valued analogues of the Fourier transform, such as the Gelfand transform, are available. A considerable attention has been given to the case of the spherical Fourier transform on Gelfand pairs, where the subclass of functions is determined by a compact group of symmetries such that the corresponding convolution algebra is commutative \cite{astengo_gelfand_2007,astengo_gelfand_2009,benson_spherical_1998,fischer_gelfand_2009,fischer_nilpotent_2012,fischer_nilpotent_preprint2012,fischer_nilpotent_2013,fischer_nilpotent_preprint2017}. However the existence of such a group of symmetries is a fairly strong constraint, considerably reducing the examples that one can consider. On the other hand, it is possible to state in great generality and without symmetry constraints (other than translation-invariance) the problem of the invariance of the Schwartz class in relation to the functional calculus for a sub-Laplacian (or, more generally, for a subelliptic system of commuting differential operators \cite{martini_multipliers_2010,martini_spectral_2011}).

Let $G$ be a connected Lie group of polynomial growth with fixed Haar measure $\meas$. Let $\{X_j\}_j$ be a system of left-invariant vector fields on $G$ that satisfy H\"ormander's condition. Let $\opL = -\sum_j X_j^2$ be the corresponding sub-Laplacian.

The operator $\opL$ is essentially self-adjoint and nonnegative on $L^2(G)$ and has a functional calculus defined via the spectral theorem. For all bounded Borel functions $F : \Rnon \to \CC$, where $\Rnon = [0,\infty)$, let $\Kern_{F(\opL)}$ denote the convolution kernel of the operator $F(\opL)$. In general, $\Kern_{F(\opL)}$ is only a distribution on $G$. However, as soon as $F$ is compactly supported, $K_{F(\opL)} \in L^2(G)$, and there exists a regular Borel measure $\Pmeas_\opL$ on $\Rnon$, whose support is the $L^2$-spectrum $\Sigma_\opL$ of $\opL$, such that the ``Plancherel identity''
\begin{equation}\label{eq:opLplancherel}
\int_G |\Kern_{F(\opL)}(x)|^2 \,d\meas(x) = \int_\Rnon |F(\lambda)|^2 \,d\Pmeas_\opL(\lambda)
\end{equation}
holds (see, e.g., \cite[Theorem 3.10]{martini_spectral_2011}); we call $\Pmeas_\opL$ the Plancherel measure associated with the sub-Laplacian $\opL$.
It should be noted that $\Kern_{F(\opL)}$ depends only on the restriction $F|_{\Sigma_\opL}$, and $\Kern_{F(\opL)} = 0$ if and only if $F$ vanishes $\sigma_\opL$-almost everywhere.
 In particular the map $T_\opL : F \mapsto \Kern_{F(\opL)}$ extends to an isometric isomorphism from $L^2(\Rnon, \Pmeas_\opL)$ to a closed subspace $\Gamma_\opL^2$ of $L^2(G)$, and moreover $T_\opL$ maps $L^1(\Rnon,\Pmeas_\opL)$ into the space $C_0(G)$ of continuous functions on $G$ vanishing at infinity.

The analogies of the ``kernel transform'' $T_\opL$ with the Fourier transform do not end here.
Let $\Sz(\Rnon)$ denote the space of restrictions to $\Rnon = [0,\infty)$ of elements of $\Sz(\RR)$; the space $\Sz(\Rnon)$ has a natural Fr\'echet structure as a quotient of $\Sz(\RR)$.
Then the following result holds.

\begin{thm}[Hulanicki \cite{hulanicki_functional_1984}]\label{thm:hulanicki}
Let $G$ be a connected Lie group of polynomial growth and $\opL$ be a sub-Laplacian thereon. If $F : \Rnon \to \CC$ is in the Schwartz class $\Sz(\Rnon)$, then $\Kern_{F(\opL)}$ is in the Schwartz class $\Sz(G)$.
\end{thm}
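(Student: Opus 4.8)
The plan is to derive the Schwartz bounds from two families of weighted $L^2$ estimates—one encoding smoothness, the other spatial decay—and then to promote these to pointwise bounds by a subelliptic Sobolev embedding. Recall that, with respect to a fixed left-invariant control distance $|\cdot|$ attached to the $X_j$ (finite by H\"ormander's condition), a smooth $g$ on $G$ lies in $\Sz(G)$ precisely when $\sup_x(1+|x|)^N|X^\alpha g(x)|<\infty$ for every word $X^\alpha$ in the $X_j$ and every $N\in\NN$. I would first reduce matters to proving, for all $k,N\in\NN$,
\[
\|\opL^k \Kern_{F(\opL)}\|_{2}<\infty,\qquad \|\chr_{\{|x|>R\}}\,\opL^k\Kern_{F(\opL)}\|_{2}\le C_{k,N}(1+R)^{-N}.
\]
Indeed, fixing $x_0$ with $|x_0|=R$, the local subelliptic regularity estimates of H\"ormander type bound $\sum_{|\alpha|\le m}\|X^\alpha g\|_{L^2(B(x_0,1))}$ by a fixed power $\|(1+\opL)^{p}g\|_{L^2(B(x_0,2))}$, while the Sobolev embedding on $G$—uniform across $G$ by left-invariance, and applicable for $m$ larger than half the local homogeneous dimension $d$—bounds $|X^\alpha g(x_0)|$ by such an $L^2$ norm over $B(x_0,1)$. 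Restricting to the region $\{|x|>R-2\}$ and invoking the second estimate then yields $(1+|x_0|)^N|X^\alpha g(x_0)|\lesssim 1$.

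The first estimate is immediate from the Plancherel identity \eqref{eq:opLplancherel}: since $\opL^k\Kern_{F(\opL)}=\Kern_{(\lambda^k F)(\opL)}$, we have $\|\opL^k\Kern_{F(\opL)}\|_2^2=\int_{\Rnon}|\lambda^kF(\lambda)|^2\,d\Pmeas_\opL(\lambda)$. As $F\in\Sz(\Rnon)$ the integrand decays faster than any power of $\lambda$, and a Tauberian argument applied to the heat trace $\int_{\Rnon}e^{-2t\lambda}\,d\Pmeas_\opL(\lambda)=\|\Kern_{e^{-t\opL}}\|_2^2=\Kern_{e^{-2t\opL}}(e)\lesssim t^{-d/2}$ (valid for small $t$) shows $\Pmeas_\opL([0,\Lambda])\lesssim(1+\Lambda)^{d/2}$; hence the integral converges. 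Thus every $\opL^k\Kern_{F(\opL)}$, and therefore—by subellipticity—every $X^\alpha\Kern_{F(\opL)}$, lies in $L^2(G)$, so $\Kern_{F(\opL)}$ is smooth with bounded derivatives.

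For the decay estimate I would exploit the \emph{finite propagation speed} of the wave operator associated with $\opL$: the convolution kernel of $\cos(t\sqrt{\opL})$ is supported in the closed ball $\overline{B(e,|t|)}$. Writing $G=\lambda^kF\in\Sz(\Rnon)$ and $H(\xi)\defeq G(\xi^2)$, which is even and Schwartz on $\RR$, Fourier inversion gives $\opL^k\Kern_{F(\opL)}=\Kern_{G(\opL)}$ with $G(\opL)=H(\sqrt{\opL})=\tfrac1{2\pi}\int_\RR\widehat H(\tau)\cos(\tau\sqrt{\opL})\,d\tau$. Splitting $\widehat H=\widehat{H_0}+\widehat{H_1}$ by a smooth cutoff at scale $R$ with $\supp\widehat{H_0}\subset[-R,R]$, the kernel of $H_0(\sqrt{\opL})$ is supported in $\overline{B(e,R)}$, so on $\{|x|>R\}$ one has $\Kern_{G(\opL)}=\Kern_{H_1(\sqrt{\opL})}$; by Plancherel,
\[
\|\chr_{\{|x|>R\}}\,\Kern_{G(\opL)}\|_2\le\|\Kern_{H_1(\sqrt{\opL})}\|_2=\Big(\int_{\Rnon}|H_1(\sqrt\lambda)|^2\,d\Pmeas_\opL(\lambda)\Big)^{1/2}.
\]
Since $\widehat{H_1}$ is supported in $\{|\tau|\gtrsim R\}$, where the Schwartz function $\widehat H$ and all its derivatives are $\bigO_N(R^{-N})$, the function $H_1$ has all Schwartz seminorms $\bigO_N(R^{-N})$; combined with the polynomial growth of $\Pmeas_\opL$ (so that $(1+\lambda)^{-M}\in L^1(\Pmeas_\opL)$ for $M>d/2$) this bounds the right-hand side by $C_{k,N}(1+R)^{-N}$, which is the required decay.

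The crux is this last step, and its engine is the finite propagation speed of the sub-Laplacian wave equation with respect to the control distance—a qualitative fact valid for any sub-Laplacian satisfying H\"ormander's condition, but whose proof (via energy estimates for $\partial_t^2+\opL$) is the substantive ingredient. Two further points need care: the polynomial growth of $\Pmeas_\opL$ at infinity, which I would extract from the small-time on-diagonal heat bound $\Kern_{e^{-t\opL}}(e)\lesssim t^{-d/2}$, and the uniformity across $G$ of the Sobolev and subelliptic estimates, which is automatic by left-invariance. Polynomial growth of $G$ underlies the entire framework—ensuring that $\Kern_{e^{-t\opL}}\in L^2(G)$ and that the weight $(1+|x|)^{-N}$ is integrable for large $N$—while the individual estimates above are essentially local or spectral in nature. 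A more computational alternative for the decay, closer to Hulanicki's original argument, would replace the wave representation by the Gaussian bounds for the derivatives $X^\alpha\Kern_{e^{-t\opL}}$ of the heat kernel together with a contour (Mellin-type) representation of $F(\opL)$ through the heat semigroup.
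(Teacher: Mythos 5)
Your proposal is correct, but it is worth pointing out that the paper contains no proof of Theorem \ref{thm:hulanicki} at all: the result is quoted as Hulanicki's theorem, and the proof is delegated to \cite{hulanicki_functional_1984} and the adaptations in \cite{alexopoulos_spectral_1994}, \cite[Proposition 4.2.1]{martini_multipliers_2010} and \cite[Theorem 6.1(iii)]{martini_crsphere}. Those arguments run through the heat semigroup: one expands $F(\opL)$ in terms of $e^{-t\opL}$ and uses weighted (Gaussian-type) estimates for the heat kernel and its derivatives --- precisely the ``computational alternative'' you mention in your closing sentence. Your route is instead the wave-equation one (in the spirit of Cheeger--Gromov--Taylor, Melrose and Sikora): reduce the Schwartz seminorms to two weighted $L^2$ bounds, obtain the global bound from the Plancherel identity \eqref{eq:opLplancherel} together with the growth estimate $\Pmeas_\opL([0,\Lambda]) \lesssim (1+\Lambda)^{d/2}$ (your Chebyshev argument on the heat trace is correct), and obtain the spatial decay by truncating $\widehat H$ at scale $R$ and using $\supp \Kern_{\cos(t\sqrt{\opL})} \subseteq \overline{B(e,|t|)}$, after which subelliptic regularity and left-invariance upgrade $L^2$ decay to pointwise decay of all derivatives. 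Every external ingredient you invoke (finite propagation speed for sub-Laplacians with respect to the control distance, H\"ormander's local subelliptic estimates, small-time on-diagonal heat bounds) is a genuine theorem proved independently of the statement, so there is no circularity; the trade-off is that your approach hinges on finite propagation speed --- a nontrivial result in the subelliptic setting --- whereas Hulanicki's needs only heat-kernel bounds, which is partly why it generalises so readily to the weighted subcoercive systems mentioned in the paper. Three small points would make your sketch airtight: (i) the equivalence of your seminorm system (sup norms, words in the $X_j$, control distance) with the paper's definition via $\Diff(G)$, $\tau_U$ and arbitrary $L^p$ --- this holds because words in the $X_j$ span $\Diff(G)$ by H\"ormander's condition and Poincar\'e--Birkhoff--Witt, and the paper notes the admissible changes of distance and exponent; (ii) the local-to-global patching (a bounded-overlap cover by unit balls, uniform by left-invariance) implicit in your claim that every $X^\alpha \Kern_{F(\opL)}$ lies in $L^2(G)$; (iii) the decay estimate should be applied to $(1+\lambda)^p F$ rather than $\lambda^k F$ when you invoke it inside the Sobolev step, which is harmless by linearity.
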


Actually \cite{hulanicki_functional_1984} discusses in detail the particular case of homogeneous Lie groups and homogeneous operators $\opL$, but it is possible to adapt the argument to any sub-Laplacian on a Lie group of polynomial growth (see also \cite{alexopoulos_spectral_1994}, \cite[Proposition 4.2.1]{martini_multipliers_2010} and \cite[Theorem 6.1(iii)]{martini_crsphere}).

In this paper we discuss the validity of the following statement, which can be thought of as a converse to Theorem \ref{thm:hulanicki}:
\statement{A}{st:szweak}{If $F : \Rnon \to \CC$ is continuous and $\Kern_{F(\opL)} \in \Sz(G)$, then $F \in \Sz(\Rnon)$.}

We are able to prove this statement for a certain class of groups of polynomial growth.
Let $\Mot$ denote the plane motion group, that is, the semidirect product $\RR^2 \rtimes \TT$ where the torus $\TT$ acts on $\RR^2$ by rotations.

\begin{thm}\label{thm:mainmain}
Let $G$ be a connected Lie group of polynomial growth and $\opL$ be a sub-Laplacian thereon. Statement \eqref{st:szweak} holds whenever the group $G$ has a quotient isomorphic to either $\RR$ or $\Mot$.
\end{thm}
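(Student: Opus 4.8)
The plan is to reduce \eqref{st:szweak} to the two model cases $H = \RR$ and $H = \Mot$, and to treat each of these by explicit harmonic analysis. The reduction rests on the behaviour of sub-Laplacians and their kernels under a quotient map $\pi\tc G \to H$ with kernel the closed normal subgroup $N$. Since $d\pi$ is a Lie algebra epimorphism, the images $d\pi(X_j)$ (discarding those that vanish) again satisfy H\"ormander's condition on $H$, so $\opL_H \defeq -\sum_j (d\pi(X_j))^2$ is a sub-Laplacian on $H$, and $H$ inherits polynomial growth. First I would check that $\pi$ intertwines the sub-Laplacians, $\pi_*(\opL f) = \opL_H(\pi_* f)$ for $f \in C_c^\infty(G)$, where $\pi_*$ denotes fibre integration over $N$; combined with the facts that $\pi_*$ maps $\Sz(G)$ into $\Sz(H)$ and is a homomorphism for convolution, this propagates through the functional calculus to give $\pi_* \Kern_{F(\opL)} = \Kern_{F(\opL_H)}$. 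Hence, if $\Kern_{F(\opL)} \in \Sz(G)$, then $\Kern_{F(\opL_H)} = \pi_* \Kern_{F(\opL)} \in \Sz(H)$; as the $L^2$-spectrum of $\opL_H$ turns out to be all of $\Rnon$ in both model cases, it suffices to establish \eqref{st:szweak} for $H \in \{\RR,\Mot\}$.

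For $H = \RR$ the only sub-Laplacian is, up to scaling, $-\partial_x^2$, and $F(\opL_\RR)$ is the Fourier multiplier operator with symbol $\xi \mapsto F(\xi^2)$, so that $\Four \Kern_{F(\opL_\RR)}(\xi) = F(\xi^2)$. If this kernel lies in $\Sz(\RR)$, then the even function $\xi \mapsto F(\xi^2)$ lies in $\Sz(\RR)$; by the standard fact that an even Schwartz function is a Schwartz function of $\xi^2$, there is $g \in \Sz(\Rnon)$ with $F(\xi^2) = g(\xi^2)$ for all $\xi$, whence $F = g \in \Sz(\Rnon)$.

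The case $H = \Mot$ is the heart of the matter. Here I would pass to the family of infinite-dimensional irreducible representations $\pi_r$ (for $r>0$) acting on $L^2(\TT)$, under which $\opL$ becomes a Mathieu-type operator $\opL_r \defeq d\pi_r(\opL)$ on the circle, with purely discrete spectrum $\{\lambda_n(r)\}_{n\geq 0}$ and Mathieu eigenfunctions $\ce_n, \se_n$, so that $\pi_r(\Kern_{F(\opL)}) = F(\opL_r) = \sum_n F(\lambda_n(r))\, P_n(r)$. The Euclidean Fourier transform in the $\RR^2$-variable, together with the Plancherel formula for $\Mot$, should characterise membership of $\Kern_{F(\opL)}$ in $\Sz(\Mot)$ in terms of smoothness and rapid decay of this operator field, both as $r \to \infty$ and down to $r = 0$. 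I would then recover $F$ from its restriction to the lowest spectral branch: the Mathieu asymptotics give that $\lambda_0(r)$ grows linearly in $r$ at infinity, with a spectral gap of order $r$ separating it from the remaining branches, allowing $F(\lambda_0(r))$ to be extracted by a spectral projection that is uniformly controlled for large $r$; near the origin one has instead $\lambda_0(r)$ growing quadratically in $r$. Since $r \mapsto \lambda_0(r)$ is then a smooth increasing bijection of $(0,\infty)$, behaving like $r^2$ at the origin and linearly at infinity, composing the extracted function $r \mapsto F(\lambda_0(r))$ with $\lambda_0^{-1}$ yields $F \in \Sz(\Rnon)$, the delicate point at $\lambda = 0$ being treated exactly as in the $\RR$-case by the even square-root substitution.

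The principal difficulties I anticipate are all concentrated in the $\Mot$-case. The first is making precise the dictionary between the Schwartz decay of $\Kern_{F(\opL)}$ on $\Mot$ and the smoothness-plus-decay of the operator field $r \mapsto F(\opL_r)$, including the degeneration of the representations $\pi_r$ as $r \to 0$. The second, and I expect the harder, is the uniform quantitative control of the Mathieu eigenvalue branches $\lambda_n(r)$ and of their separation: this is needed both to isolate $\lambda_0(r)$ cleanly for large $r$ --- where the near-degeneracy of the two lowest branches forces one to contend with exponentially small tunnelling corrections --- and to guarantee that the resulting change of variables transports Schwartz regularity faithfully to $F$. By contrast, the quotient reduction and the $\RR$-case I expect to be routine.
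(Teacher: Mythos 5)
Your skeleton — quotient reduction, the $\RR$ case via Fourier transform plus Whitney's even-function theorem, and the $\Mot$ case via the representations $\pi_r$, the lowest Mathieu branch and composition with its inverse — is the paper's. But there is a genuine gap exactly at the step you flag as hardest, and the route you propose there would fail. You want to extract $F(\lambda_0(r))$ from $\pi_r(\Kern_{F(\opL)})$ ``by a spectral projection that is uniformly controlled for large $r$'', contending with ``exponentially small tunnelling corrections''. For $\opML_{r^2} = -\partial_\phi^2 + r^2\sin^2\phi$ on $L^2(\TT)$ the two lowest eigenvalues of the \emph{full} operator are indeed exponentially close as $r \to \infty$ (the double-well phenomenon), so the Riesz projection onto the lowest eigenvalue alone admits no polynomially controlled bounds, and no argument ``contending with'' the tunnelling is supplied or, I believe, available along these lines. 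The paper sidesteps this entirely, in two ways you are missing. First, $L^2(\TT)$ is decomposed into the four parity/periodicity subspaces $L^2_{(i,j)}(\TT)$ (Section \ref{s:mathieu}); within the class $(0,0)$ the ground eigenvalue $\lambda^q = \lambda^q_{(0,0),0}$ is simple with gap $\gtrsim (1+q)^{1/2}$ to the rest of \emph{its own} class, which is all that is ever needed (the exponentially close competitor lives in the class $(0,1)$ and never enters). Second, no spectral projection is used at all: since $F$ is continuous, Proposition \ref{prp:rep_fc_cont} gives $\pi_r(\Kern_{F(\opL)}) = F(d\pi_r(\opL))$ \emph{exactly, for every} $r$, so pairing with the exact normalized eigenfunction $H^{r^2}$ yields $F(\lambda^{r^2}+\beta r^2) = \langle \pi_r(\Kern_{F(\opL)}) H^{r^2}, H^{r^2}\rangle = G_F(r)$ on the nose. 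The Schwartz property of $G_F$ then follows from the explicit formula \eqref{eq:kernel} (partial Fourier transform of the kernel integrated against $H^{r^2}$) together with the smoothness and polynomial bounds in $q$ of the eigendata (Propositions \ref{prp:eigen_smooth} and \ref{prp:eigen_estimates_poly}); your proposed full ``dictionary'' between $\Sz(\Mot)$ and decay of the whole operator field $r \mapsto F(d\pi_r(\opL))$ is both harder and unnecessary, since only this one matrix coefficient is needed. After that, evenness of $G_F$, Whitney, and a chain-rule analysis of $\psi^{-1}$ for $\psi(q)=\lambda^q+\beta q$ (using Proposition \ref{prp:eigen_asymp}) finish the proof as you describe. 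A minor omission in the same vein: you need the normal-form classification of sub-Laplacians on $\Mot$ (Proposition \ref{prp:classif_sublap_mot}) before asserting that $d\pi_r(\opL)$ is Mathieu-plus-constant; the elliptic case $-(T^2+X^2+Y^2)$ is not of this form and is handled separately as an abelian Laplacian on $\CC\times\TT$.

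A secondary but real gap is in the reduction step. The identity $\pi_*\Kern_{F(\opL)} = \Kern_{F(\opL_H)}$ does not ``propagate through the functional calculus'' from the intertwining relation on $C_c^\infty$: for general bounded Borel $F$ it is \emph{false} (this is precisely why the continuity hypothesis appears in \eqref{st:szweak}), so any formal propagation argument proves too much. The paper's Lemma \ref{lem:quoziente}\ref{en:quoziente_ker} obtains it for continuous $F$ with integrable kernel by applying Proposition \ref{prp:rep_fc_cont} — which rests on the amenability of $G$ — to the quasi-regular representation of $G$ on $L^2(G/H)$, and then identifying $\pi(\Kern_{F(\opL)})$ as the convolution operator with kernel $P\Kern_{F(\opL)}$. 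Your reduction needs to say where continuity of $F$ and amenability enter; with that repaired, your treatment of the averaging operator and the $\RR$ case agrees with the paper.
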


\begin{cor}\label{cor:main}
Let $G$ be a connected Lie group of polynomial growth and $\opL$ be a sub-Laplacian thereon. Statement \eqref{st:szweak} holds whenever the group $G$ is solvable and noncompact.
\end{cor}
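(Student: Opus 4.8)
The plan is to derive Corollary~\ref{cor:main} from Theorem~\ref{thm:mainmain} by a purely structural argument: I will show that every connected solvable noncompact Lie group $G$ of polynomial growth admits a quotient isomorphic to $\RR$ or to $\Mot$. Recall that, by Guivarc'h's theorem, polynomial growth is equivalent to $G$ being of type~R, i.e.\ all operators $\ad X$ ($X \in \mathfrak{g}$) have purely imaginary eigenvalues, a property inherited by quotients. I would write $G = \tilde G/\Gamma$, where $\tilde G$ is the universal cover (simply connected solvable of type~R) and $\Gamma = \pi_1(G)$ is a discrete central subgroup, denote by $\mathfrak{g}$ the Lie algebra with nilradical $\mathfrak{n}$ and centre $\mathfrak{z}$, and then distinguish the two cases according to whether $\mathfrak{g}$ is nilpotent.

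If $G$ is nilpotent, I claim its abelianization $G^{\mathrm{ab}} = G/\overline{[G,G]}$ is noncompact, whence composing $G \twoheadrightarrow G^{\mathrm{ab}} \cong \RR^a \times \TT^b$ with a coordinate projection produces a surjection onto $\RR$. To prove the claim I would use that for simply connected nilpotent groups $\exp$ is a diffeomorphism, so $Z(\tilde G) = \exp \mathfrak{z}$ is connected and $\Gamma \subseteq \exp \mathfrak{z}$. A short lower-central-series computation shows that for non-abelian nilpotent $\mathfrak{g}$ one has $\mathfrak{z} + [\mathfrak{g},\mathfrak{g}] \subsetneq \mathfrak{g}$ (otherwise $[\mathfrak{g},\mathfrak{g}] = [\mathfrak{g},[\mathfrak{g},\mathfrak{g}]]$, forcing $\mathfrak{g}$ abelian); hence the image of $\Gamma$ in the abelianized cover $\tilde G^{\mathrm{ab}} \cong \RR^m$ lies in a proper subspace, so $G^{\mathrm{ab}}$, which is $\RR^m$ modulo the closure of that image, retains a nontrivial vector part. (The abelian case is immediate.)

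If $G$ is not nilpotent, I aim instead for a $\Mot$-quotient. First, $\mathfrak{g}/\mathfrak{n} \cong \RR^k$ acts on $\mathfrak{a} := \mathfrak{n}/[\mathfrak{n},\mathfrak{n}]$ by commuting semisimple operators with imaginary spectrum, and this action must be nontrivial --- otherwise every $\ad X$ would be nilpotent and $\mathfrak{g}$ nilpotent by Engel's theorem. Thus there is a nonzero weight, giving a $2$-dimensional rotation plane $V \subseteq \mathfrak{a}$; quotienting $\mathfrak{g}$ first by $[\mathfrak{n},\mathfrak{n}]$, then by the remaining weight spaces, and finally by the kernel of the weight (the relevant extension splits because $V$ carries no trivial subrepresentation) yields a Lie algebra surjection $\mathfrak{g} \twoheadrightarrow \mot$. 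Integrating, I obtain a surjection $q \colon \tilde G \twoheadrightarrow \widetilde{\Mot}$ whose kernel is the closed connected normal subgroup attached to the ideal.

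The delicate point --- and the step I expect to be the main obstacle --- is to turn this Lie-algebra quotient into a genuine Lie-group quotient that is the compact model $\Mot$ rather than a non-Hausdorff object. The resolution is that $\Gamma$ is central, so $q(\Gamma) \subseteq Z(\widetilde{\Mot})$; a direct computation gives $Z(\widetilde{\Mot}) \cong \ZZ$ (the full rotations), which is discrete. Hence $q(\Gamma)$ is discrete, $(\ker q)\cdot\Gamma$ is closed, and $q$ descends to a surjection $G \twoheadrightarrow \widetilde{\Mot}/q(\Gamma)$ onto a Lie group isomorphic to $\widetilde{\Mot}$ or to $\Mot$. Since $\widetilde{\Mot}/Z(\widetilde{\Mot}) = \Mot$, composing with the projection $\widetilde{\Mot} \twoheadrightarrow \Mot$ when necessary yields a quotient of $G$ isomorphic to $\Mot$ in either case, which together with the nilpotent case and Theorem~\ref{thm:mainmain} completes the proof.
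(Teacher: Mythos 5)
Your argument is a genuinely different route from the paper's: the paper works at the group level with the nilradical $N$, reducing to the case $G/N \cong \TT^h$ and invoking Hochschild's theorem to split $G = HN$ with $H$ compact, so that complete reducibility of the compact action on $\lie{n}$ is automatic; you instead pass to the universal cover and argue at the Lie algebra level via a nilpotent/non-nilpotent dichotomy. Your nilpotent case is correct (the computation $\lie{z} + [\lie{g},\lie{g}] \subsetneq \lie{g}$ and the descent through $\Gamma \subseteq \exp\lie{z}$ both work). The non-nilpotent case, however, contains a genuine gap: the claim that $\lie{g}/\lie{n}$ acts on $\lie{a} = \lie{n}/[\lie{n},\lie{n}]$ by \emph{semisimple} operators is false. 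Type R only forces the eigenvalues of each $\ad X$ to be purely imaginary; it does not exclude Jordan blocks. Concretely, take $\lie{n} = \RR^4 = \RR^2 \oplus \RR^2$ abelian and $\lie{g} = \RR X \ltimes_J \RR^4$ with
\[
J = \begin{pmatrix} R & I \\ 0 & R \end{pmatrix}, \qquad R = \begin{pmatrix} 0 & -1 \\ 1 & 0 \end{pmatrix}.
\]
This $\lie{g}$ is solvable, non-nilpotent, of type R (so the corresponding simply connected group has polynomial growth), and $\RR^4$ is its nilradical, but $J$ is not semisimple. On this example your procedure returns the wrong answer: the only weights are $\pm i$, the corresponding weight space is the single rotation plane $V = \RR^2 \oplus 0$, there are no ``remaining weight spaces'' to kill, and the weight has trivial kernel, so your chain of quotients leaves the five-dimensional algebra $\lie{g}$ intact rather than producing $\mot$. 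The repair is to dualize the construction: quotient $\lie{a}$ by a \emph{maximal proper submodule} so that the surviving piece is an irreducible \emph{quotient} module with nonzero weight (in the example one quotients by $V$ itself, and $\lie{a}/V$ is a rotation plane, giving $\lie{g}/V \cong \mot$). Relatedly, nontriviality of the action does not by itself yield a nonzero weight (a nontrivial nilpotent action has only the zero weight); the correct statement is that if \emph{all eigenvalues} of all $\ad X$ on $\lie{a}$ vanished, then the eigenvalues on each graded piece of the lower central series of $\lie{n}$, being sums of eigenvalues on $\lie{a}$, would vanish too, so every $\ad X$ would be nilpotent and $\lie{g}$ nilpotent by Engel, against the case assumption.

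There is also a smaller slip at the group level: $\widetilde{\Mot}/q(\Gamma)$ need not be isomorphic to $\widetilde{\Mot}$ or to $\Mot$. If $q(\Gamma)$ is the subgroup of index $m>1$ in $Z(\widetilde{\Mot}) \cong \ZZ$, the quotient is $\RR^2 \rtimes (\RR/2\pi m\ZZ)$, whose centre is cyclic of order $m$, while $\Mot$ has trivial centre. Your own closing device repairs this, provided it is applied uniformly: since $q(\Gamma) \subseteq Z(\widetilde{\Mot})$, always compose with the further projection onto $\widetilde{\Mot}/Z(\widetilde{\Mot}) \cong \Mot$; the total kernel is closed, so $\Mot$ is a quotient of $G$ in every case. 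With these two corrections --- irreducible quotient modules in place of a weight-space decomposition, and the uniform projection modulo the image of the centre --- your proof goes through and gives the same conclusion as Proposition \ref{prp:quozienti_particolari}; but as written, the semisimplicity step is a real gap rather than a technicality, since it is exactly where algebras such as the example above escape your construction. It is worth noting that this is precisely the difficulty the paper's use of Hochschild's splitting avoids: there the acting group $H$ is compact, so its action on $\lie{n}$ is completely reducible for free.
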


Note that the validity of statement \eqref{st:szweak} implies, in particular, that the $L^2$-spectrum of $\opL$ is the whole $\Rnon$
 (otherwise one could modify $F$ outside $\Sigma_\opL$ so that $F$ is continuous but not smooth on $\Rnon$, without changing $\Kern_{F(\opL)}$). 
Under the assumption that $\Sigma_\opL = \Rnon$, statement \eqref{st:szweak} can be equivalently rephrased as follows:
\statement{A'}{st:szweak2}{If $F : \Rnon \to \CC$ is continuous and $\Kern_{F(\opL)} \in \Sz(G)$, then $F$ coincides $\Pmeas_\opL$-almost everywhere with an element of $\Sz(\Rnon)$.}
Statement \eqref{st:szweak2} itself does not force $\Sigma_\opL$ to be the whole $\Rnon$ and indeed may hold on groups (such as $\TT^n$) where the spectrum of $\opL$ is discrete.

One may wonder whether the \emph{a priori} continuity assumption on $F$ in statements \eqref{st:szweak} and \eqref{st:szweak2} is really needed. Strictly speaking, the continuity assumption in \eqref{st:szweak} is necessary, since one can modify $F$ on a $\Pmeas_\opL$-null set without changing $K_{F(\opL)}$.
On the other hand, one can wonder whether this is just an issue of choosing the right representative of $F$ modulo $\Pmeas_\opL$, i.e., whether the following statement holds:
\statement{B}{st:szstrong}{If $F : \Rnon \to \CC$ is a bounded Borel function and $\Kern_{F(\opL)} \in \Sz(G)$, then $F$ coincides $\Pmeas_\opL$-almost everywhere with an element of $\Sz(\Rnon)$.}

This stronger statement would immediately follow from statement \eqref{st:szweak2} and the following statement, which we can think of as an analogue of the Riemann--Lebesgue lemma:
\statement{C}{st:rl}{If $F : \Rnon \to \CC$ is a bounded Borel function and $\Kern_{F(\opL)} \in L^1(G)$, then  then $F$ coincides $\Pmeas_\opL$-almost everywhere with an element of $C_0(\Rnon)$.}

Note that the fact that $F$ vanishes at infinity, i.e.,
\begin{equation}\label{eq:vanishinfty}
\lim_{R \to \infty} \| F \, \chr_{[R,\infty)} \|_{L^\infty(\Rnon,\Pmeas_\opL)} = 0,
\end{equation}
whenever $\Kern_{F(\opL)} \in L^1(G)$, can be proved in great generality and follows from the fact that the heat kernel associated to $\opL$ is an approximate identity (see, e.g., \cite[Proposition 3.14]{martini_spectral_2011}); what is difficult to prove is the continuity of $F$ (or rather, the existence of a continuous representative modulo $\Pmeas_\opL$).

The above ``Riemann--Lebesgue lemma'' \eqref{st:rl} for the functional calculus of a sub-Laplacian can be checked in many particular cases,
however we are not aware of any result of this kind for arbitrary Lie groups $G$ of polynomial growth and sub-Laplacians $\opL$. The techniques developed in this paper allow us to show the validity of the ``Riemann--Lebesgue lemma'' (and consequently of the strengthened version of Theorem \ref{thm:mainmain}) in a number of cases.

\begin{thm}\label{thm:riemannlebesgue}
Let $G$ be a connected Lie group of polynomial growth and $\opL$ be a sub-Laplacian thereon. Statement
\eqref{st:rl} holds in each of the following cases:
\begin{enumerate}[label=\textup{(\roman*)}]
\item\label{en:riemannlebesguehomogeneous} $G$ is a stratified Lie group, and $\opL$ is a homogeneous sub-Laplacian;
\item\label{en:riemannlebesgueabelian} $G$ is abelian;
\item\label{en:riemannlebesguemot} $G$ is the plane motion group $\Mot$.
\end{enumerate}
\end{thm}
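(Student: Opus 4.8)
The common thread I would follow is that, in each of the three cases, $\opL$ admits an explicit simultaneous diagonalization through a Fourier-type transform adapted to $G$, so that $F(\opL)$ is unitarily equivalent to a field of operators $F(A_\pi)$ indexed by the relevant representations $\pi$, and the kernel $g \defeq \Kern_{F(\opL)}$ becomes the operator field $\pi(g) = F(A_\pi)$. Since the vanishing-at-infinity property \eqref{eq:vanishinfty} is already available in full generality (the heat kernel being an approximate identity), the whole problem reduces to producing a \emph{continuous} representative of $F$ modulo $\Pmeas_\opL$. The engine for this is an operator-valued Riemann--Lebesgue lemma: because $g \in L^1(G)$ and all the groups involved are type I, the field $\pi \mapsto \pi(g)$ depends continuously on $\pi$ (in a suitable topology) and decays at infinity, the bound $\|\pi(g)\| \le \|g\|_{L^1(G)}$ together with density of $C_c(G)$ providing the necessary uniformity. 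Pairing $\pi(g) = F(A_\pi)$ against spectral projections of $A_\pi$ that vary continuously with $\pi$ then transfers this continuity to $F$, provided the eigenvalues of $A_\pi$ sweep out $\Sigma_\opL$ as $\pi$ varies.

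I would first treat the abelian case \ref{en:riemannlebesgueabelian}, where the picture is cleanest: here $\pi$ is a character $\xi \in \widehat{G}$, the operator $A_\xi = p(\xi)$ is the scalar value of a positive-definite quadratic symbol $p$, and $\pi(g) = \widehat{g}(\xi) = F(p(\xi))$. The classical Riemann--Lebesgue lemma on $\widehat{G} \cong \RR^d \times \ZZ^k$ gives $F \circ p \in C_0(\widehat{G})$, and one descends to continuity of $F$ by restricting $p$ to a ray $\{sv\}$ with $p(v) = 1$, so that $s \mapsto F(s^2)$ is continuous and $F$ is continuous on $(0,\infty)$ with the right-continuity at $0$ coming from continuity of $F \circ p$ at the origin; the toral directions only contribute a discrete part of $\Sigma_\opL$, on which any function trivially admits a $C_0(\Rnon)$ representative.

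For the stratified homogeneous case \ref{en:riemannlebesguehomogeneous}, I would exploit the automorphic dilations $\delta_t$: homogeneity of $\opL$ forces $\Sigma_\opL = \Rnon$ and makes the generic representations $\pi_t$ dilation-conjugate, so that $A_{\pi_t}$ is unitarily equivalent to $t\, A_{\pi_1}$. Fixing one generic $\pi_1$ for which $A_{\pi_1}$ is an anharmonic-oscillator-type operator with discrete spectrum $\{\lambda_k\}_{k}$ and smoothly varying eigenprojections, the continuity of $t \mapsto \pi_t(g) = F(t A_{\pi_1})$ yields that each $t \mapsto F(t\lambda_k)$ is continuous; since $\lambda_k > 0$ these rays already cover $(0,\infty)$, giving continuity of $F$ there, while continuity at $0$ follows from the matching of the ground branch as $t \to 0^+$, and the general vanishing at infinity closes the argument.

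The plane motion group \ref{en:riemannlebesguemot} is where the real work lies, and I expect it to be the main obstacle. The infinite-dimensional representations $\pi_r$ ($r>0$) realize $A_{\pi_r}$ as a Mathieu-type operator $-\partial_\theta^2 + r^2 \cos^2\theta$ on $L^2(\TT)$ (hence the Mathieu functions $\ce$, $\se$ and the characteristic values $\Val$ underlying the analysis), whose eigenvalues $\lambda_n(r)$ and eigenprojections $\Eigen$ depend on $r$ in a way governed by the fine theory of the Mathieu equation. The delicate points are: (a) upgrading the abstract Riemann--Lebesgue continuity to genuine control of the scalar branches $r \mapsto F(\lambda_n(r))$, including at the parameter values where two branches approach and the eigenprojections require careful continuity arguments; (b) showing that the collection $\{\lambda_n(\cdot)\}_n$ covers $\Sigma_\opL$ with enough overlap to glue the locally obtained continuous pieces of $F$ into a single continuous function on $\Rnon$; and (c) matching, as $r \to 0^+$, the infinite-dimensional part of $\Pmeas_\opL$ with the contribution of the one-dimensional characters coming from the $\TT$-quotient, so that the resulting representative is continuous across this junction. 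Handling these Mathieu-asymptotic and eigenvalue-crossing issues is, I expect, the crux of the proof.
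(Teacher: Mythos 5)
Your high-level plan (diagonalize via representations, pair against spectral data, reduce to continuity of scalar eigenvalue branches) matches the paper's strategy for cases \ref{en:riemannlebesgueabelian} and \ref{en:riemannlebesguemot}, but it assumes away the single issue that makes statement \eqref{st:rl} hard. Your engine is the identity $\pi(\Kern_{F(\opL)}) = F(d\pi(\opL))$, used as if it held for every representation in your families. For a bounded \emph{Borel} $F$ --- which is all the hypothesis gives --- this identity is exactly what can fail: Proposition \ref{prp:rep_fc_cont} requires $F$ continuous, which is what you are trying to prove, so invoking it is circular. The best available substitute (Proposition \ref{prp:rep_fc}, proved via type~I direct-integral theory) gives the identity only for $\GPmeas$-almost every $\pi \in \widehat G$. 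Consequently your pairing argument produces $F(\lambda_n(r)) = G(r)$ with $G$ continuous only for \emph{almost every} $r$, and to convert ``$F$ agrees Lebesgue-a.e.\ with a continuous function'' into the conclusion of \eqref{st:rl} one must know that $\Pmeas_\opL$ is absolutely continuous with respect to Lebesgue measure. This absolute continuity is a key lemma in the paper (it is \eqref{eq:RTPmeas2} in the abelian case and Proposition \ref{prp:mot_plancherel_ac} for $\Mot$, the latter computed by summing over \emph{all} Mathieu branches via \eqref{eq:bi_plancherel}), and your proposal never addresses either the a.e.\ restriction or the measure-theoretic transfer. Relatedly, in the abelian case the relation $\Four(\Kern_{F(\opL)}) = F \circ p$ holds only almost everywhere on $\widehat G$, so restricting to a single ray requires a Fubini-type selection of a good direction (as in the proof of Proposition \ref{prp:abelianRL}); ``$F \circ p \in C_0(\widehat G)$'' is not literally available.

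Two further points. For case \ref{en:riemannlebesguehomogeneous} your dilation-orbit argument has the same circularity and, unlike the others, cannot in general be repaired by the a.e.\ version: a single orbit $\{\pi_1 \circ \delta_t\}_{t>0}$ is a $\GPmeas$-null set whenever the generic part of $\widehat G$ is a multi-parameter family (which happens for stratified groups beyond Heisenberg-type), so Proposition \ref{prp:rep_fc} says nothing along it; you would additionally need discreteness of the spectrum of $d\pi(\opL)$ and regularity of its eigenprojections, which are nontrivial inputs. The paper instead avoids representation theory entirely here: homogeneity gives $d\Pmeas_\opL(\lambda) = c\lambda^{Q/2-1}\,d\lambda$ and $\chi_\opL(t^2\lambda,x) = \chi_\opL(\lambda,\delta_t(x))$, whence the integral kernel $\chi_\opL$ of the kernel transform is \emph{jointly} continuous (Proposition \ref{prp:kerneltransformkernel}\ref{en:ktk_smooth} plus \eqref{eq:chi_hom}), and the inversion formula \eqref{eq:opLinversion} then yields the stronger statement that $T_\opL^*$ maps $L^1(G)$ into $C(\Rnon)$. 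Finally, for $\Mot$ the difficulties you flag --- eigenvalue crossings, gluing overlapping branches, matching with the one-dimensional representations as $r \to 0^+$ --- are not where the work lies: the paper pairs only against the \emph{lowest} Mathieu eigenfunction, and the single branch $\psi(q) = \lambda^{q}_{(0,0),0} + \beta q$ is a smooth, strictly increasing bijection of $\Rnon$ onto itself (Propositions \ref{prp:eigen_continuous}, \ref{prp:eigen_smooth}, \ref{prp:eigen_asymp}), so it alone covers the whole spectrum and no gluing or crossing analysis is needed; the one-dimensional representations are $\GPmeas$-null and play no role. The branch structure enters only in the proof that $\Pmeas_\opL$ is absolutely continuous.
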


Part \ref{en:riemannlebesguehomogeneous} 
of Theorem \ref{thm:riemannlebesgue} actually follows from a stronger result, namely, the adjoint $T_\opL^*$ of the kernel transform $T_\opL$ maps $L^1(G)$ into $C(\Sigma_\opL)$; note that $T_\opL^*$ extends $T_\opL^{-1} : \Gamma_\opL^2 \to L^2(\Rnon,\Pmeas_\sigma)$, since $T_\opL$ is an isometry.
However, as we shall see, this stronger statement may fail for other groups and sub-Laplacians (even on abelian groups), without preventing \eqref{st:rl} from being true.

The proof of Theorem \ref{thm:riemannlebesgue}\ref{en:riemannlebesguemot}, instead, makes fundamental use of detailed knowledge of the representation theory and the group Plancherel measure for $\Mot$, and it does not seem easy to extend the method of proof to essentially more general classes of groups.
It would seem natural to find a more ``conceptual'' approach to the result, however there appear to be several obstacles; we will discuss some of them in Sections \ref{s:kerneltransform} and \ref{s:groupplancherel}.

The reduction to quotients in the proof of Theorem \ref{thm:mainmain}, as well as the subsequent analysis based on representation theory, exploits the following fundamental result proved in \cite[Proposition 2.1]{ludwig_sub-laplacians_2000} (see also \cite[Proposition 1.1]{mller_restriction_1990} and \cite[Proposition 3.7]{martini_spectral_2011}), whose validity depends on the amenability of $G$.

\begin{prp}\label{prp:rep_fc_cont}
Let $F : \Rnon \to \CC$ be continuous and such that $\Kern_{F(\opL)} \in L^1(G)$. Then, for all unitary representations $\pi$ of $G$,
\begin{equation}\label{eq:rep_fc_cont}
\pi(\Kern_{F(\opL)}) = F(d\pi(\opL)).
\end{equation}
\end{prp}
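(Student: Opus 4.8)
The plan is to prove the identity first for the single generating family of heat multipliers, then to propagate it to all admissible $F$ by a $C^*$-algebraic argument in which amenability is decisive. First I would treat the heat semigroup, i.e.\ $F(\lambda) = e^{-t\lambda}$. Write $h_t = \Kern_{e^{-t\opL}}$ for the heat kernel. Since $h_{t+s} = h_t * h_s$ and $\pi$ is a $*$-homomorphism of the convolution algebra, the operators $\pi(h_t)$ form a strongly continuous semigroup of self-adjoint contractions on the representation space (using $\|h_t\|_{L^1(G)} = 1$ and the symmetry $h_t = h_t^*$). Differentiating in $t$, using $\partial_t h_t = -\opL h_t$ and the intertwining relation $\pi(\opL g) = \pi(g)\, d\pi(\opL)$ valid on smooth vectors (which follows from unimodularity and $d\pi(X_j)\pi(g) = -\pi(X_j g)$), I would identify the infinitesimal generator of $\{\pi(h_t)\}$ with $-d\pi(\opL)$. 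Since $d\pi(\opL) = -\sum_j d\pi(X_j)^2$ is a nonnegative symmetric operator that is essentially self-adjoint on the smooth vectors, it generates a unique semigroup, and hence $\pi(h_t) = e^{-t\,d\pi(\opL)}$ for all $t > 0$.

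Integrating this identity against $e^{-st}\,dt$ produces the resolvents. Because $\|h_t\|_{L^1(G)} = 1$, the kernel $r_s = \Kern_{(\opL+s)^{-1}} = \int_0^\infty e^{-st} h_t\,dt$ converges in $L^1(G)$, with $\|r_s\|_{L^1(G)} \le 1/s$, and boundedness of $\pi$ gives $\pi(r_s) = \int_0^\infty e^{-st} e^{-t\,d\pi(\opL)}\,dt = (d\pi(\opL)+s)^{-1}$ for every $s > 0$. This is where amenability enters. Since $G$ is amenable, $C^*(G) = C^*_r(G)$ and every $\pi$ is weakly contained in the left regular representation $\lambda$, so $\|\pi(g)\| \le \|\lambda(g)\|$ for all $g \in L^1(G)$; moreover $\lambda(\Kern_{F(\opL)}) = F(\opL)$, so $\|\pi(\Kern_{F(\opL)})\| \le \|F(\opL)\| = \|F\|_{L^\infty(\Sigma_\opL)}$, and in particular the spectrum of $d\pi(\opL)$ is contained in $\Sigma_\opL$. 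Consequently both $F \mapsto \pi(\Kern_{F(\opL)})$ and $F \mapsto F(d\pi(\opL))$ are contractive $*$-homomorphisms from the commutative $C^*$-algebra $C_0(\Sigma_\opL)$ into the bounded operators on the representation space. They coincide on the resolvent functions $\lambda \mapsto (\lambda+s)^{-1}$, which separate the points of $\Sigma_\opL$ and therefore generate $C_0(\Sigma_\opL)$ as a $C^*$-algebra; by continuity the two homomorphisms agree on all of $C_0(\Sigma_\opL)$.

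Finally, for a general continuous $F$ with $\Kern_{F(\opL)} \in L^1(G)$, the vanishing-at-infinity property \eqref{eq:vanishinfty} (itself a consequence of the heat kernel being an approximate identity) shows that $F|_{\Sigma_\opL} \in C_0(\Sigma_\opL)$; since both $\Kern_{F(\opL)}$ and $F(d\pi(\opL))$ depend only on $F|_{\Sigma_\opL}$, the preceding step applies and yields \eqref{eq:rep_fc_cont}. I expect the main obstacle to be the rigorous first step: identifying the generator of $\{\pi(h_t)\}$ and, above all, the essential self-adjointness of the sub-Laplacian $d\pi(\opL)$ on the smooth vectors of an arbitrary unitary representation, which is what pins down the semigroup uniquely. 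By contrast, the integrability $r_s \in L^1(G)$, the Stone--Weierstrass density of the resolvents, and the verification that both maps are $*$-homomorphisms are comparatively routine, while the use of amenability to dominate $\|\pi(\cdot)\|$ by the reduced norm is the conceptual crux that makes the $C^*$-algebraic propagation go through.
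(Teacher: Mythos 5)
A preliminary remark on the comparison: the paper does not prove Proposition \ref{prp:rep_fc_cont} at all --- it quotes it from \cite[Proposition 2.1]{ludwig_sub-laplacians_2000} (see also \cite[Proposition 1.1]{mller_restriction_1990} and \cite[Proposition 3.7]{martini_spectral_2011}). Measured against those sources, your architecture is exactly the standard one: identify $\pi(h_t)$ with $e^{-t\,\overline{d\pi(\opL)}}$ (using that $h_t \in \Sz(G)$ by Theorem \ref{thm:hulanicki}, so that the differentiation and the core argument go through, and Nelson--Stinespring essential self-adjointness to pin down the generator), pass to resolvents by Laplace transform, and propagate through the commutative $C^*$-algebra they generate using amenability, i.e.\ weak containment of every $\pi$ in the regular representation. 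You also correctly isolate the two genuine crux points: essential self-adjointness of $d\pi(\opL)$ on smooth vectors (available from \cite{nelson_representation_1959}, which the paper cites) and the amenability norm bound. (Minor convention point: with the paper's definition $\pi(K) = \int_G K(x)\pi(x^{-1})\,d\meas(x)$ and $F(\opL)f = f * \Kern_{F(\opL)}$, the map $K \mapsto \pi(K)$ is an \emph{anti}-homomorphism for convolution; this is harmless here since the kernels involved commute under convolution.)

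There is, however, one step whose justification as written is a non sequitur: the claim that ``$\|\pi(\Kern_{F(\opL)})\| \le \|F\|_{L^\infty(\Sigma_\opL)}$, and in particular the spectrum of $d\pi(\opL)$ is contained in $\Sigma_\opL$.'' Spectral containment does not follow from such norm bounds alone: for nonnegative self-adjoint operators, comparing resolvent norms at the points available to you only controls the \emph{bottom} of the spectrum, since $\|(A+s)^{-1}\| = 1/(s+\inf \mathrm{spec}(A))$ when $\mathrm{spec}(A) \subseteq \Rnon$ and $s>0$. Moreover you need this containment \emph{before} you can regard $F \mapsto F(d\pi(\opL))$ as a well-defined $*$-homomorphism on $C_0(\Sigma_\opL)$, which is what your Stone--Weierstrass step presupposes; as stated, the argument is circular at this point. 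The repair stays entirely within your toolkit. Amenability gives a $*$-homomorphism $\Pi : C^*_r(G) \to B(\Hilb^\pi)$ extending $K \mapsto \pi(K)$ on $L^1(G)$; since the resolvent kernels $r_s$ lie in $L^1(G)$ and generate, each $F(\opL)$ with $F \in C_0(\Sigma_\opL)$ lies in $C^*_r(G)$, so $\Phi(F) := \Pi(F(\opL))$ is a representation of the commutative $C^*$-algebra $C_0(\Sigma_\opL)$, nondegenerate because $\Phi(u_s) = \pi(r_s) = (\overline{d\pi(\opL)}+s)^{-1}$ (where $u_s(\lambda) = (\lambda+s)^{-1}$) is injective with dense range. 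By the spectral theorem for nondegenerate representations of $C_0(\Sigma_\opL)$, there is a unique self-adjoint $B$ with $\mathrm{spec}(B) \subseteq \Sigma_\opL$ and $\Phi(F) = F(B)$; equating resolvents, $(B+s)^{-1} = \Phi(u_s) = (\overline{d\pi(\opL)}+s)^{-1}$, forces $B = \overline{d\pi(\opL)}$, which yields the spectral containment and the identity $\Phi(F) = F(d\pi(\opL))$ in one stroke. Your final reduction of a general continuous $F$ with $\Kern_{F(\opL)} \in L^1(G)$ to this case is then sound: \eqref{eq:vanishinfty} together with $\supp \Pmeas_\opL = \Sigma_\opL$ and continuity of $F$ gives $F|_{\Sigma_\opL} \in C_0(\Sigma_\opL)$, and $\pi(\Kern_{F(\opL)}) = \Pi(F(\opL)) = F(d\pi(\opL))$ because $\Pi$ was defined on $L^1$ kernels precisely so that the first equality holds.
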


Here, for all unitary representations $\pi$ of $G$ and $K \in L^1(G)$, $\pi(K)$ denotes the operator $\int_G K(x) \, \pi(x^{-1}) \,d\meas(x)$, while $d\pi(\opL)$ is the image of $\opL$ via the differentiated representation $d\pi$; it is known that $d\pi(\opL)$ is essentially self-adjoint on the space of smooth vectors of the representation (see, e.g., \cite{nelson_representation_1959} or \cite[\S 3.1]{martini_spectral_2011} for details), so a functional calculus for $d\pi(\opL)$ is defined via the spectral theorem, and  Proposition \ref{prp:rep_fc_cont} gives a connection between the functional calculus of the sub-Laplacian $\opL$ and that of the corresponding operator $d\pi(\opL)$ in any unitary representation $\pi$.

The continuity assumption on $F$ in Proposition \ref{prp:rep_fc_cont} in general cannot be removed and corresponds to the continuity assumption in our Theorem \ref{thm:mainmain}. In Section \ref{s:groupplancherel} below we shall discuss a weakening of the continuity assumption, yielding a weaker version of Proposition \ref{prp:rep_fc_cont}, in the case $G$ is of type I, by exploiting the Plancherel formula for the group Fourier transform. Unfortunately this does not seem enough to obtain a Riemann--Lebesgue lemma in great generality, but will anyway be of use in the proof of Theorem \ref{thm:riemannlebesgue}\ref{en:riemannlebesguemot} for the group $\Mot$.

\subsection*{Structure of the paper}

In Section \ref{s:quotients} we recall basic definitions and results about Lie groups of polynomial growth, including the definition of the Schwartz class, and we show how the proof of statement \eqref{st:szweak} for a group $G$ reduces to the corresponding statement for some quotient of $G$, and how Corollary \ref{cor:main} follows from Theorem \ref{thm:mainmain}. In Section \ref{s:se2} we complete the proof of Theorem \ref{thm:mainmain} by proving statement \eqref{st:szweak} for the groups $\RR$ and $\Mot$. In Section \ref{s:kerneltransform} we investigate properties of the integral kernel of the kernel transform $T_\opL$ and prove parts \ref{en:riemannlebesguehomogeneous} and \ref{en:riemannlebesgueabelian} of Theorem \ref{thm:riemannlebesgue}. In Section \ref{s:groupplancherel} we discuss the relation between the group Plancherel measure and the Plancherel measure associated with a sub-Laplacian and obtain a version of Proposition \ref{prp:rep_fc_cont} with weaker assumptions and conclusion. The proof of Theorem \ref{thm:riemannlebesgue}\ref{en:riemannlebesguemot} is discussed in Section \ref{s:rlmot}. Finally, in Section \ref{s:mathieu} we discuss properties of eigenvalues and eigenfunctions of Mathieu operators that are used throughout the paper in the analysis of sub-Laplacians in irreducible representations of $\Mot$.

\subsection*{Notation}

We set $\Rnon = [0,\infty)$ and $\Rpos = (0,\infty)$. For a locally compact topological space $X$, we denote by $C(X)$, $C_c(X)$, $C_0(X)$ the spaces of functions on $X$ which are, respectively, continuous, continuous with compact support, continuous and vanishing at infinity. Moreover we write $\chr_S$ for the characteristic function of $S$.

\section{Lie groups of polynomial growth and their quotients}\label{s:quotients}

Let $G$ be a locally compact group with left Haar measure $\mu$. Recall that $G$ is said to be of polynomial growth if there exists a compact symmetric neighbourhood $U$ of the identity of $G$ which generates $G$ and such that the sequence $(\mu(U^n))_{n \in \NN}$
has at most polynomial growth as $n \to \infty$. As it turns out, this growth property does not depend on the choice of $U$, and moreover any group of polynomial growth is unimodular and amenable \cite{guivarch_croissance_1973}.
The following statement summarises a number of well-known facts (see, e.g., \cite{guivarch_croissance_1973,hewitt_abstract_1979,varadarajan_lie_1974}) that will be of use later.

\begin{prp}\label{prp:quotient_measure}
Let $G$ be a connected Lie group of polynomial growth with Haar measure $\mu$. Let $H$ be a closed normal subgroup and $\tilde G = G/H$ be the corresponding quotient.
\begin{enumerate}[label=\textup{(\roman*)}]
\item $\tilde G$ is a connected Lie group of polynomial growth.
\item $H$ is unimodular, and the action of $G$ on $H$ by conjugation preserves any Haar measure on $H$.
\item For any Haar measure $\tilde\meas$ on $\tilde G$, there exists a Haar measure $\meas_H$ on $H$ such that, if  $P : L^1(G) \to L^1(\tilde G)$ is the ``averaging operator'' satisfying
\begin{equation}\label{eq:averaging_op}
Pf(xH) = \int_H f(xh) \,d\meas_H(h) = \int_H f(hx) \,d\meas_H(h)
\end{equation}
for all $f \in L^1(G)$ and $x \in G$, then
\begin{equation}\label{eq:quotient_measure}
\int_{\tilde G} Pf(z) \,d\tilde\meas(z) = \int_G f(x) \,d\meas(x)
\end{equation}
for all $f \in L^1(G)$.
\end{enumerate}
\end{prp}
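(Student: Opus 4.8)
The plan is to obtain the three assertions by assembling standard Lie-theoretic and measure-theoretic facts, the only genuine work being to track modular functions carefully so that the averaging operator $P$ is well defined and enjoys the claimed two-sided symmetry. For part (i), I would first invoke the standard fact that a quotient of a Lie group by a closed normal subgroup is again a Lie group \cite{varadarajan_lie_1974}, and that $\tilde G$ is connected as the continuous image of the connected group $G$. To see that $\tilde G$ inherits polynomial growth, the cleanest route is Guivarc'h's characterisation of connected Lie groups of polynomial growth as the groups of type $R$ \cite{guivarch_croissance_1973}, a property manifestly passed to quotients. Alternatively, writing $q \tc G \to \tilde G$ for the projection and fixing a compact symmetric generating neighbourhood $U$ of the identity in $G$, one checks that $\tilde U \defeq q(U)$ is such a neighbourhood in $\tilde G$ with $\tilde U^n = q(U^n)$; choosing a compact $V \subseteq H$ with $\meas_H(V) > 0$ and $V \subseteq U^k$, the quotient integral formula of part (iii) yields $\meas_H(V)\, \tilde\meas(\tilde U^n) \le \meas(U^n V) \le \meas(U^{n+k})$, so the growth of $\tilde U^n$ is controlled by that of $U^{n+k}$.

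For part (ii), I would use that $G$, being of polynomial growth, is unimodular, hence $\Delta_G \equiv 1$; by part (i) the same holds for $\tilde G$. The restriction identity $\Delta_G|_H = \Delta_H$, valid for closed normal subgroups \cite{hewitt_abstract_1979}, then forces $\Delta_H \equiv 1$, i.e.\ $H$ is unimodular. For the conjugation action, I would invoke the modular-function identity expressing $\mathrm{mod}_H(c_x)$, the modulus by which the automorphism $c_x(h) = xhx^{-1}$ scales a Haar measure of $H$, as a quotient of $\Delta_G(x)$ and $\Delta_{\tilde G}(q(x))$ \cite{hewitt_abstract_1979}; since both modular functions are trivial, $\mathrm{mod}_H(c_x) = 1$, so $c_x$ preserves every Haar measure on $H$.

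For part (iii), the equality $\Delta_G|_H = \Delta_H$ just established is precisely Weil's condition for the existence of a $G$-invariant measure on $G/H$, and Weil's integration formula then supplies Haar measures for which $\int_G f \,d\meas = \int_{\tilde G} \bigl( \int_H f(xh) \,d\meas_H(h) \bigr) \,d\tilde\meas(z)$; with $\meas$ and $\tilde\meas$ fixed, I normalise $\meas_H$ to absorb the resulting constant. That the inner integral descends to a well-defined function of $z = xH$ uses right-invariance of $\meas_H$, i.e.\ unimodularity of $H$ from (ii); and the second expression $\int_H f(hx)\,d\meas_H(h)$ agrees with the first because $f(hx) = f(x\, c_{x^{-1}}(h))$ and $c_{x^{-1}}$ preserves $\meas_H$ by (ii) — this is exactly the point at which conjugation-invariance is needed.

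The main obstacle is not any single hard estimate but the modular-function bookkeeping underlying (ii) and (iii): one must be careful with the orientation of the identities relating $\Delta_G$, $\Delta_H$, $\Delta_{\tilde G}$, and $\mathrm{mod}_H(c_x)$, and confirm that unimodularity of $\tilde G$ (which itself relies on (i)) is what simultaneously guarantees the existence of the invariant quotient measure and the symmetry $\int_H f(xh)\,d\meas_H = \int_H f(hx)\,d\meas_H$. If one prefers the hands-on volume argument for (i) over Guivarc'h's theorem, a minor circularity must also be avoided by establishing (ii) and the Weil formula of (iii) first.
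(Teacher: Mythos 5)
Your proposal is correct and takes essentially the same route the paper intends: the paper states this proposition without proof, as a summary of well-known facts drawn from exactly the references you invoke (Guivarc'h for the type-$R$ characterisation of polynomial growth and unimodularity, Hewitt--Ross for the modular-function identities and Weil's integration formula, Varadarajan for the Lie-theoretic quotient facts). Your ordering --- (i) via type $R$, then (ii), then (iii) --- also correctly avoids the circularity you flag in the hands-on volume argument.
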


Let $G$ be a connected Lie group of polynomial growth, with Haar measure $\mu$. Its Lie algebra $\lie{g}$ can be identified with the space of left-invariant vector fields on $G$; correspondingly the universal enveloping algebra of $\lie{g}$ can be identified with the algebra $\Diff(G)$ of left-invariant differential operators on $G$.

We now recall the definition of the Schwartz class on $G$  (see also \cite{hulanicki_functional_1984,ludwig_algebre_1995,schweitzer_dense_1993} and \cite[\S 1.2.6]{martini_multipliers_2010}). Let $U$ be a compact symmetric neighbourhood of the identity of $G$ and define
\begin{equation}\label{eq:modulus}
\tau_U(x) = \min \{ k \in \NN \tc x \in U^n \}
\end{equation}
for all $x \in \NN$. Let $p \in [1,\infty]$. A function $f \in C^\infty(G)$ belongs to the Schwartz class $\Sz(G)$ if and only if, for all $N \in \NN$ and all $W \in \Diff(G)$, the quantity
\begin{equation}\label{eq:szseminorm}
\szP_{U,p;W,N}(f) = \| (1+\tau_U)^N W f \|_{L^p(G)}
\end{equation}
is finite. One can show that the definition of the class $\Sz(G)$ is independent of the choice of the exponent $p$ and the neighbourhood $U$, as also is the Fr\'echet structure induced by the family of seminorms $\{ \szP_{U,p; W,N} \tc N \in \NN, \, W \in \Diff(G) \}$ on $\Sz(G)$. Moreover, in the definition \eqref{eq:szseminorm} of the seminorms, the function $\tau_U$ could be equivalently replaced by the distance from the identity with respect to any ``connected distance'' on $G$ in the sense of \cite[\S III.4]{varopoulos_analysis_1992}.

We call sub-Laplacian on $G$ any left-invariant differential operator of the form $\opL = -\sum_j X_j^2$, where $\{X_j\}_j$ is a system of left-invariant vector fields satisfying H\"ormander's condition (i.e., the $X_j$ and their iterated Lie brackets span $\lie{g}$).

Let $\tilde G$ be a quotient of $G$ as in Proposition \ref{prp:quotient_measure} and let $p : G \to \tilde G$ be the canonical projection. Then the differential of $p$ at the identity defines an epimorphism of Lie algebras $p ' : \lie{g} \to \lie{\tilde g}$, which extends in turn to an algebra epimorphism $p' : \Diff(G) \to \Diff(\tilde G)$. In particular, if $\opL =- \sum_j X_j^2$ is a sub-Laplacian on $G$, then $p'(\opL) = -\sum_j p'(X_j)^2$ is a sub-Laplacian on $\tilde G$, which will be called the push-forward of $\opL$ on $\tilde G$.

As mentioned in the introduction, any sub-Laplacian is a nonnegative essentially self-adjoint operator on $L^2$, and the operators in its functional calculus are left-invariant, i.e., convolution operators.
The following result allows us to relate the calculus of a sub-Laplacian with that of its push-forward on a quotient.

\begin{lem}\label{lem:quoziente}
Let $G$ be a connected Lie group of polynomial growth, with Haar measure $\meas$. Let $\tilde G = G/H$ be a quotient of $G$, with Haar measure $\tilde\mu$, and let $P : L^1(G) \to L^1(\tilde G)$ be the averaging operator as in Proposition \ref{prp:quotient_measure}. Let $\opL$ be a sub-Laplacian on $G$ and $\tilde\opL$ its push-forward on $\tilde G$.
\begin{enumerate}[label=\textup{(\roman*)}]
\item\label{en:quoziente_sz} If $f \in \Sz(G)$, then $Pf \in \Sz(\tilde G)$, and the map $P : \Sz(G) \to \Sz(\tilde G)$ is continuous.
\item\label{en:quoziente_ker} If $F : \Rnon \to \CC$ is continuous
 and $\Kern_{F(\opL)} \in L^1(G)$, then $\Kern_{F(\tilde\opL)} = P \Kern_{F(\opL)}$.
\end{enumerate}
\end{lem}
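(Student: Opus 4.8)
The plan is to prove the two parts separately, exploiting throughout that the canonical projection $p \colon G \to \tilde G$ is a Lie group homomorphism (so that $p(x\exp(tX)) = p(x)\exp(t\,p'(X))$ for $X \in \lie{g}$) and that the averaging operator $P$ is governed by the integration formula \eqref{eq:quotient_measure}. For part \ref{en:quoziente_sz}, since the Fréchet structure of $\Sz$ is independent of the exponent in \eqref{eq:szseminorm}, I would work with $p=1$ throughout, and I need two facts. First, a commutation relation: writing $Pf(xH) = \int_H f(hx)\,d\meas_H(h)$ as in \eqref{eq:averaging_op} and differentiating under the integral sign, for $X \in \lie{g}$ with $\tilde X = p'(X)$ one finds $\tilde X(Pf) = P(Xf)$; iterating and using that $p' \colon \Diff(G) \to \Diff(\tilde G)$ is an algebra homomorphism gives $\tilde W(Pf) = P(Wf)$ for all $W \in \Diff(G)$, and in particular $Pf \in C^\infty(\tilde G)$. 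Second, a comparison of weights: choosing $\tilde U = p(U)$ (a compact symmetric generating neighbourhood of $\tilde G$), one has $\tilde U^n = p(U^n)$, hence $\tau_{\tilde U}(p(x)) \le \tau_U(hx)$ for every $h \in H$, because $p(hx) = p(x)$.

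Combining the two, for $\tilde W = p'(W)$ and $g = Wf$, so that $Pg = \tilde W(Pf)$, I would estimate pointwise
\[
(1+\tau_{\tilde U}(p(x)))^N\,|\tilde W Pf(p(x))| \le \int_H (1+\tau_U(hx))^N |g(hx)|\,d\meas_H(h) = P\bigl((1+\tau_U)^N|g|\bigr)(p(x)),
\]
and then integrate over $\tilde G$, invoking \eqref{eq:quotient_measure}, to obtain $\szP_{\tilde U,1;\tilde W,N}(Pf) \le \szP_{U,1;W,N}(f)$. Since $p'$ is surjective, every generating seminorm on $\Sz(\tilde G)$ is dominated by a single seminorm on $\Sz(G)$, which yields simultaneously $Pf \in \Sz(\tilde G)$ and the continuity of $P \colon \Sz(G) \to \Sz(\tilde G)$.

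For part \ref{en:quoziente_ker}, set $K = \Kern_{F(\opL)} \in L^1(G)$, so that $PK \in L^1(\tilde G)$ by Proposition \ref{prp:quotient_measure}. The idea is to pull back the left regular representation $\tilde\lambda$ of $\tilde G$ to the representation $\pi = \tilde\lambda \circ p$ of $G$ and apply Proposition \ref{prp:rep_fc_cont}. Since $d\pi = d\tilde\lambda \circ p'$ on $\lie{g}$, one gets $d\pi(\opL) = -\sum_j d\tilde\lambda(p'(X_j))^2 = d\tilde\lambda(\tilde\opL) = \tilde\opL$ on $L^2(\tilde G)$, so $F(d\pi(\opL)) = F(\tilde\opL)$. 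On the other hand, interpreting the operator-valued integrals weakly and applying \eqref{eq:quotient_measure} to $K \cdot (\Phi \circ p)$ with $\Phi(z) = \phi(zw)$ shows
\[
\pi(K)\phi(w) = \int_G K(x)\,\phi(p(x)w)\,d\meas(x) = \int_{\tilde G} PK(z)\,\phi(zw)\,d\tilde\meas(z) = \tilde\lambda(PK)\phi(w),
\]
that is, $\pi(K) = \tilde\lambda(PK)$. Proposition \ref{prp:rep_fc_cont} then gives $\tilde\lambda(PK) = \pi(K) = F(\tilde\opL)$; since $PK \in L^1(\tilde G)$ and the integrated regular representation $g \mapsto \tilde\lambda(g)$ is injective on $L^1(\tilde G)$, this identifies $PK$ as the convolution kernel of $F(\tilde\opL)$, i.e. $\Kern_{F(\tilde\opL)} = P\Kern_{F(\opL)}$.

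I expect the routine verifications (differentiation under the integral sign, smoothness of $Pf$, and the weak interpretation of the operator-valued integrals) to be unproblematic. The one point deserving care is the identity $d\pi(\opL) = \tilde\opL$: one must check that the essentially self-adjoint operator obtained by pulling $\opL$ back through $p$ coincides, as a self-adjoint operator on $L^2(\tilde G)$, with the push-forward sub-Laplacian $\tilde\opL$, which is exactly what makes Proposition \ref{prp:rep_fc_cont} applicable and the two functional calculi match.
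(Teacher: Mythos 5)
Part \ref{en:quoziente_sz} of your proposal is correct and is essentially the paper's own argument: the commutation $p'(W)Pf = P(Wf)$, the comparison of weights via $\tilde U = p(U)$, integration against \eqref{eq:quotient_measure}, and the surjectivity of $p' \colon \Diff(G) \to \Diff(\tilde G)$.

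In part \ref{en:quoziente_ker} your overall strategy is also the paper's (pull a translation representation of $\tilde G$ back to $G$, apply Proposition \ref{prp:rep_fc_cont}, and compute the integrated representation via \eqref{eq:quotient_measure}), but the identity you yourself single out as ``the one point deserving care'' is false as stated. For the \emph{left} regular representation $\tilde\lambda(g)\phi(w) = \phi(g^{-1}w)$, the differentiated representation sends $Y \in \lie{\tilde g}$ to \emph{minus the right-invariant} vector field agreeing with $Y$ at the identity: the generators of left translations are right-invariant fields, not left-invariant ones. Hence $d\pi(\opL) = -\sum_j d\tilde\lambda(p'(X_j))^2$ is the right-invariant sub-Laplacian built from the vectors $p'(X_j)$, i.e.\ $J\tilde\opL J$ where $J\phi(z) = \phi(z^{-1})$; this differs from the left-invariant operator $\tilde\opL$ unless $\tilde G$ is abelian, so $d\tilde\lambda(\tilde\opL) = \tilde\opL$ fails. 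The same left/right confusion resurfaces at the end: $\tilde\lambda(PK)$ is the operator $\phi \mapsto \int_{\tilde G} PK(z)\,\phi(z\,\cdot)\,d\tilde\meas(z)$, a \emph{left} convolution, whereas ``$PK$ is the convolution kernel of $F(\tilde\opL)$'' means $F(\tilde\opL)\phi = \phi * PK$, a \emph{right} convolution; equating these two kinds of operators via injectivity of $\tilde\lambda$ on $L^1(\tilde G)$ is not legitimate as written.

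The repair is either of the following. (a) The paper's choice: use instead the representation of $G$ on $L^2(\tilde G)$ coming from the action of $G$ on $\tilde G = H\backslash G$ by \emph{right} translations, $(\pi(y)f)(Hx) = f(Hxy)$; right translations differentiate to left-invariant fields, so $d\pi(W) = p'(W)$ for all $W \in \Diff(G)$, the identity $d\pi(\opL) = \tilde\opL$ genuinely holds, and the computation with \eqref{eq:quotient_measure} gives $\pi(\Kern_{F(\opL)})f = f * P\Kern_{F(\opL)}$, identifying $P\Kern_{F(\opL)}$ directly as the convolution kernel of $F(\tilde\opL)$. (b) Keep your $\pi = \tilde\lambda \circ p$ but carry the inversion through: $\pi(\Kern_{F(\opL)}) = F(J\tilde\opL J) = J F(\tilde\opL) J$, and a short computation (using unimodularity) shows $J F(\tilde\opL) J = \tilde\lambda(\Kern_{F(\tilde\opL)})$, after which injectivity of $\tilde\lambda$ on $L^1(\tilde G)$ does yield $P\Kern_{F(\opL)} = \Kern_{F(\tilde\opL)}$. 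So your conclusion is salvageable, but only after inserting $J$ at exactly the two places where left- and right-invariance were silently interchanged.
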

\begin{proof}
\ref{en:quoziente_sz}. Let $U$ be a compact symmetric neighbourhood of the identity in $G$. Then its image $\tilde U$ in $\tilde G$ via the canonical projection $p : G \to \tilde G$ is a compact symmetric neighbourhood of the identity in $\tilde G$. If we define $\tau_U : G \to \Rnon$ and $\tau_{\tilde U} : \tilde G \to \Rnon$ as in \eqref{eq:modulus},
then clearly
\[
\tilde\tau(xH) \leq \tau(x)
\]
for all $x \in G$. Note moreover that, since $P$ can be written as an integral of left translations as in \eqref{eq:averaging_op}, and left-invariant operators commute with left translations, it is easily seen that, for all $f \in \Sz(G)$, $Pf$ is a smooth function on $\tilde G$ and
\[
P W f = p'(W) P f
\]
for all $f \in \Sz(G)$ and all $W\in \Diff(G)$.
In particular, for all $h \in \NN$,
\[\begin{split}
(1+\tau_{\tilde U}(xH))^N |p'(W) P f(xH)| &\leq (1+\tau_{\tilde U}(xH))^N P |Wf|(xH) \\
& \leq P |(1+\tau_U)^N Wf|(xH),
\end{split}\]
whence
\[\begin{split}
\szP_{\tilde U,1; p'(W),N}(Pf) &= \int_{\tilde G} (1+\tau_{\tilde U}(xH))^N |p'(W) P f(xH)| \,d\tilde\meas(xH) \\
&\leq \int_G (1+\tau_U(x))^N |Wf(x)| \,d\meas(x) = \szP_{U,1; W,N}(f),
\end{split}\]
where the Schwartz seminorms $\szP_{\tilde U,1; p'(W),N}$ and $\szP_{U,1; W,N}$ are defined as in \eqref{eq:szseminorm}.
Note moreover that $p' : \Diff(G) \to \Diff(\tilde G)$ is surjective. 
This shows that,
if $f \in \Sz(G)$, then $Pf \in \Sz(\tilde G)$ and the map $P : \Sz(G) \to \Sz(\tilde G)$ is continuous.

\ref{en:quoziente_ker}. 
Since $H$ is a normal subgroup of $G$, $xH = Hx$ for all $x \in G$ and the left and right quotients $G/H$ and $H \backslash G$ coincide. In particular there is a natural action of $G$ on $\tilde G$ by right translations, and a corresponding unitary representation $\pi$ of $G$ on $L^2(\tilde G)$. It is then easily verified that smooth vectors in the representation $\pi$ are smooth functions $f$ on $\tilde G$, and
\[
d\pi(W) f = p'(W) f
\]
for all $W \in \Diff(G)$. In particular, if $F : \Rnon \to \CC$ is continuous and $\Kern_{F(\opL)} \in L^1(G)$, then
\[
\pi(\Kern_{F(\opL)}) = F(d\pi(\opL)) = F(\tilde\opL)
\]
by Proposition \ref{prp:rep_fc_cont}.
 On the other hand, for all $f \in C_c(\tilde G)$ and $Hx \in \tilde G$,
\[\begin{split}
\pi(\Kern_{F(\opL)}) f(Hx) 
&= \int_G \Kern_{F(\opL)}(y) (\pi(y^{-1}) f)(Hx) \,d\meas(y) \\
&= \int_G \Kern_{F(\opL)}(y) f(Hxy^{-1}) \,d\meas(y) \\
&= \int_{\tilde G} P(\Kern_{F(\opL)} g)(Hy) \,d\tilde\meas(Hy),
\end{split}\]
where
\[
g(y) = f(Hxy^{-1}) = f(Hx(Hy)^{-1}) = \tilde g(Hy).
\]
Hence $P(\Kern_{F(\opL)} g) = P(\Kern_{F(\opL)}) \tilde g$, and
\[\begin{split}
\pi(\Kern_{F(\opL)}) f(Hx) 
&= \int_{\tilde G} f(Hx (Hy)^{-1}) P(\Kern_{F(\opL)})(Hy) \,d\tilde\meas(Hy) \\
&= f*P(\Kern_{F(\opL)})(Hx).
\end{split}\]
This shows that $P \Kern_{F(\opL)}$ is the convolution kernel of $\pi(\Kern_{F(\opL)}) = F(\tilde\opL)$.
\end{proof}

Lemma \ref{lem:quoziente} shows that the proof of statement \eqref{st:szweak} for a certain group $G$ can be reduced to the proof of the analogous statement for some quotient of $G$.
Hence it is enough to prove Theorem \ref{thm:mainmain} in the particular cases where $G = \RR$ or $G = \Mot$. The proof is given in Section \ref{s:se2} below. First, however, we show that Corollary \ref{cor:main} is a particular case of Theorem \ref{thm:mainmain} (cf.\ also \cite[Remark 3.9]{breuillard_geometry_2014}).

\begin{prp}\label{prp:quozienti_particolari}
Assume that $G$ is solvable and noncompact. Then there is a closed subgroup $H$ of $G$ such that the quotient $G/H$ is isomorphic to either $\RR$ or $\Mot$.
\end{prp}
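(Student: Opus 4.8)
The plan is to analyze the structure of a connected, solvable, noncompact Lie group $G$ of polynomial growth and exhibit a closed normal subgroup yielding the desired quotient. First I would pass to the Lie algebra $\lie{g}$ and use the fact that polynomial growth forces a strong restriction on the adjoint action: by Guivarc'h's theorem, a connected solvable Lie group has polynomial growth if and only if for every $X \in \lie{g}$ the operator $\ad X$ has purely imaginary eigenvalues (equivalently, $G$ is of type $\mathrm{R}$, with no exponential-growth directions). Since $G$ is solvable, the derived algebra $\lie{n} = [\lie{g},\lie{g}]$ is nilpotent and the quotient $\lie{g}/\lie{n}$ is abelian and nontrivial unless $G$ itself is nilpotent.

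The key dichotomy I would set up is whether $G$ is nilpotent or not. If $G$ is nilpotent and noncompact, then it is simply connected (a connected nilpotent Lie group of polynomial growth that is noncompact has a nontrivial simply connected part), and its abelianization $\lie{g}/\lie{n}$ is a nonzero vector space; projecting onto any one-dimensional quotient of this abelianization produces a continuous surjective homomorphism $G \to \RR$, whose kernel $H$ is closed and normal with $G/H \cong \RR$. If $G$ is solvable but \emph{not} nilpotent, then there exists $X \in \lie{g}$ acting nontrivially on $\lie{n}$ with $\ad X$ having a nonzero (purely imaginary) eigenvalue; the minimal such configuration is a two-dimensional $\ad X$-invariant subspace $V \subseteq \lie{n}$ on which $\ad X$ acts as a rotation generator. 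I would isolate the three-dimensional subalgebra $\lie{h} = V \oplus \RR X \cong \mot$ and argue that, after quotienting by a suitable closed normal subgroup complementary to this piece, the image is isomorphic to $\Mot$ (using that the purely-imaginary spectrum forces the one-parameter group generated by $X$ to act by genuine rotations, giving the compact torus factor rather than $\RR$).

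The main obstacle I expect is the second case: passing cleanly from the \emph{infinitesimal} picture (a copy of $\mot$ inside $\lie{g}$ with $\ad X$ acting as a rotation) to a genuine \emph{group} quotient isomorphic to $\Mot$, rather than to its universal cover $\RR^2 \rtimes \RR$. This is exactly where polynomial growth is essential: one must verify that the relevant one-parameter subgroup closes up to a circle $\TT$ in an appropriate quotient, which requires checking that the rotation action has a common period and that the normal subgroup one quotients by is genuinely closed and contains no part of the rotation direction. I would handle this by choosing $H$ to be the preimage of a codimension-three normal subalgebra chosen to kill all of $\lie{g}$ except the $\mot$-summand while respecting the eigenspace structure, and then confirming on the quotient that the eigenvalues of $\ad X$ force the torus action. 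The abelian subcase $G/H \cong \RR$ will be comparatively routine, as will the verification that $H$ is closed, since quotients of polynomial-growth groups remain of polynomial growth by Proposition~\ref{prp:quotient_measure}.
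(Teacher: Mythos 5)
Your proposal contains genuine gaps in both branches of your dichotomy. In the nilpotent case, two of your claims are incorrect as stated. A connected, nilpotent, noncompact Lie group need not be simply connected ($\RR\times\TT$ is abelian and noncompact). More importantly, a one-dimensional quotient of the abelianization $\lie{g}/[\lie{g},\lie{g}]$ need not integrate to a group homomorphism $G\to\RR$: for $G=\RR\times\TT$, the projection of $\lie{g}\cong\RR^2$ onto the $\TT$-direction is a Lie algebra surjection onto $\RR$ that cannot descend to $G$, since a compact subgroup admits no nontrivial continuous homomorphism into $\RR$. So ``projecting onto any one-dimensional quotient'' fails; one must first quotient by the compact (central torus) directions, or, as the paper does, base the dichotomy on whether $G/N$ has an $\RR$-factor, where $N$ is the nilradical.

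The serious gap is the non-nilpotent case, where your construction is missing precisely at its crucial step. First, $V\oplus\RR X$ need not be a subalgebra: in the oscillator group, with $\lie{g}$ spanned by $X,Y,Z,T$ subject to $[X,Y]=Z$, $[T,X]=Y$, $[T,Y]=-X$, $[T,Z]=0$ (solvable, noncompact, of polynomial growth), the rotation plane is $V=\Span\{X,Y\}$ and $[V,V]=\RR Z\not\subseteq V\oplus\RR T$. Second, even when $V\oplus\RR X$ is a subalgebra, the existence of a codimension-three \emph{ideal} complementary to it --- your ``normal subalgebra chosen to kill all of $\lie{g}$ except the $\mot$-summand'' --- is essentially the content of the proposition itself: complements of subalgebras are generically not ideals, and you give no construction. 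Third, your claim that the purely imaginary spectrum of $\ad X$ ``forces the torus factor'' cannot be turned into a proof: the universal cover $\RR^2\rtimes\RR$ of $\Mot$ has identical infinitesimal and spectral data, yet is not isomorphic to $\Mot$ (nor are the intermediate covers); no condition on the Lie algebra can distinguish a group from its covers. (Closedness of the would-be normal subgroup is likewise asserted, not proven.) The paper resolves all of these points by a chain of explicit group-level reductions: if $G/N$ is noncompact one gets an $\RR$-quotient at once; if $G/N\cong\TT^h$, one quotients by a term of the ascending central series of $N$ to make $N$ abelian (exactly the step that kills $Z$ in the oscillator example), then by the compact factor of $N$, then invokes Hochschild's splitting theorem to write $G=H\ltimes N$ with $H$ a \emph{compact} torus --- compactness of the complement is what produces the circle in $\Mot$, with no ``closing up'' argument needed --- and finally quotients by the sum of the complementary irreducible $H$-invariant subspaces of $\lie{n}$ (normal because $N$ is by then abelian) and by the kernel of the $H$-action. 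Each subgroup quotiented by is manifestly closed. Your outline would need essentially all of this machinery to be completed.
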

\begin{proof}
Let $N$ be the nilradical of $G$. Then $N$ is a closed connected normal subgroup of $G$ \cite[Theorem 3.18.13]{varadarajan_lie_1974} and the quotient $G/N$ is abelian, because $G$ is solvable. Therefore $G/N$ it is isomorphic to $\RR^k \times \TT^h$ for some $k,h \in \NN$. If $k > 0$, then there is a quotient of $G$ that is isomorphic to $\RR$.

Suppose instead that $k=0$, i.e., $G/N$ is isomorphic to $\TT^h$. Since $G$ is noncompact, $N$ is noncompact as well. Let $\{e\} = Z_0 \subseteq Z_1 \subseteq \dots$ denote the ascending central series of $N$; then the $Z_j$ are closed characteristic subgroups of $N$ (hence they are closed normal subgroups of $G$) and moreover, since $N$ is nilpotent and nontrivial, there exists $s \in \NN$ such that $Z_s \neq N$ but $Z_{s+1} = N$. Note also that $N/Z_s$ is noncompact; this is clear when $s=0$, since $N$ is noncompact, while if $s>0$ then $N/Z_s$ is isomorphic to the quotient $\tilde N/ \tilde Z_s$  of the universal covering group of $N$ by the $s$th element of the respective ascending central series (indeed $N$ is a quotient of $\tilde N$ by a discrete central subgroup, and this subgroup is contained in $\tilde Z_s$ when $s>0$). Hence, modulo replacing $G$ with $G/Z_s$, we may assume that $N$ is abelian.

In particular, $N$ is isomorphic to $\TT^m \times \RR^n$, and actually the compact subgroup $K$ of $N$ corresponding to $\TT^m \times \{0\}$ is a characteristic topological subgroup of $N$ (it is the set of the elements of $N$ whose powers do not escape to infinity). Hence $K$ is a normal compact subgroup of $G$ and, modulo replacing $G$ with $G/K$, we may assume that $m = 0$, i.e., $N$ is isomorphic to $\RR^n$. Note that $n>0$, since $N$ is noncompact.

By \cite[Chapter III, Theorem 2.3]{hochschild_structure_1965}, $N$ is a semidirect factor of $G$, i.e., $G = HN$ for some compact subgroup $H$ of $G$ isomorphic to $G/N$. Correspondingly $\lie{g}$ decomposes as (linear) direct sum of the Lie subalgebra $\lie{h}$ and the ideal $\lie{n}$.

The adjoint action of $G$ on $\lie{g}$ induces an action of $H$ on $\lie{n}$. Since $H \cong \TT^h$, we can find an inner product on $\lie{n}$ such that $H$ acts on $\lie{n}$ by isometries and decompose $\lie{n}$ into a direct sum of irreducible $H$-invariant subspaces of dimension $1$ or $2$. 
If all these $H$-invariant subspaces are $1$-dimensional, then the action of $H$ on $\lie{n}$ is trivial, i.e., the action of $H$ on $N$ by conjugation is trivial; in this case, $H$ is a normal compact subgroup of $G$ and $G/H \cong N \cong \RR^n$, so $G$ has a quotient isomorphic to $\RR$ in this case.

Suppose instead that in the decomposition of $\lie{n}$ into irreducible $H$-invariant subspaces a $2$-dimensional subspace occurs. Let $\lie{n}'$ be the sum of all the other irreducible $H$-invariant subspaces in the decomposition, and let $N'$ be the corresponding subgroup of $N$. Then $N'$ is a normal subgroup of $G$, since its Lie algebra $\lie{n}'$ is $H$-invariant by construction, and it is $N$-invariant since $N$ is abelian. Modulo replacing $G$ with $G/N'$, we may assume that $N$ is isomorphic to $\RR^2$ and $H$ acts irreducibly on $\lie{n}$ by isometries.

The kernel $L$ of the action of $H$ on $\lie{n}$ is a closed subgroup of $H$, hence it is a compact subgroup of $G$. Moreover $L$ is central in $G$, because each elements of $L$ commutes with the elements of both $H$ and $N$. In particular $L$ is a normal closed subgroup of $G$ and, modulo replacing $G$ with $G/L$, we may assume that the action of $H$ on $\lie{n}$ is faithful. This forces $h=1$, i.e., $G \cong \RR^2 \rtimes \TT$.
\end{proof}

\section{\texorpdfstring{Sub-Laplacians and Schwartz class on $\RR$ and $\Mot$}{Sub-Laplacians and Schwartz class on R and SE(2)}}\label{s:se2}

The validity of statement \eqref{st:szweak} in the case $G = \RR$ is trivial and well known.
Indeed any sub-Laplacian $\opL$ on $\RR$ can be written, up to rescaling, as $\opL = -\partial_t^2$.
Then $F(\tau^2) = \hat \Kern_{F(\opL)}(\tau)$ for all $\tau \in \RR$, where $\hat f$ denotes the Fourier transform of $f : \RR \to \CC$. The result is then essentially reduced to the fact that the Euclidean Fourier transform preserves the Schwartz class. 
Perhaps the only subtlety in the argument is making sure that the change of variables $\lambda = \tau^2$ preserves the Schwartz class; this is done by means of the following classical result, essentially due to Whitney \cite{whitney_differentiable_1943}, that we record here for future convenience.

\begin{prp}\label{prp:even_schwartz}
Let $K \in \Sz(\RR)$ be even. Then there exists $\tilde K \in \Sz(\RR)$ such that $\tilde K(\tau^2) = K(\tau)$ for all $\tau \in \RR$.
\end{prp}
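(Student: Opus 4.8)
The plan is to set $\tilde K(\lambda) \defeq K(\sqrt\lambda)$ for $\lambda \geq 0$, to show that this defines a $C^\infty$ function on $[0,\infty)$ all of whose derivatives decay rapidly, and then to extend it to a Schwartz function on the whole line. The required identity is then automatic: since $\tau^2 \geq 0$ and $K$ is even, $\tilde K(\tau^2) = K(\sqrt{\tau^2}) = K(|\tau|) = K(\tau)$ for every $\tau \in \RR$. Thus the only real content is the regularity of $\tilde K$ near $\lambda = 0$ (where $\lambda \mapsto \sqrt\lambda$ is singular) and its decay as $\lambda \to \infty$.

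The key observation, and the place where evenness of $K$ is essential, is the following elementary fact, which I would record first: \emph{if $h \in \Sz(\RR)$ is odd, then $h(\tau)/\tau$ (set equal to $h'(0)$ at $\tau = 0$) again belongs to $\Sz(\RR)$ and is even.} Smoothness at the origin follows from $h(0) = 0$ by writing $h(\tau)/\tau = \int_0^1 h'(s\tau)\,ds$; rapid decay away from the origin is inherited from that of $h$ (by the Leibniz rule, using that $\tfrac{d^i}{d\tau^i}\tau^{-1}$ is bounded for $|\tau| \geq 1$); and the quotient is even because $h'$ is even.

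Using this, I would compute the derivatives of $\tilde K$ on $(0,\infty)$ and argue by induction. Since $K$ is even, $K'$ is odd, so by the observation $\phi_1 \defeq K'/(\cdot) \in \Sz(\RR)$ is even, and for $\lambda > 0$ one has $\tilde K'(\lambda) = K'(\sqrt\lambda)/(2\sqrt\lambda) = \tfrac12 \phi_1(\sqrt\lambda)$, which is of exactly the same form as $\tilde K$ with the even Schwartz function $\phi_1$ in place of $K$. Iterating produces even functions $\phi_k \in \Sz(\RR)$ and constants $c_k$ with
\[
\tilde K^{(k+1)}(\lambda) = c_k\,\phi_k'(\sqrt\lambda)\,\frac{1}{2\sqrt\lambda} = \frac{c_k}{2}\,\phi_{k+1}(\sqrt\lambda), \qquad \phi_{k+1}(\tau) = \phi_k'(\tau)/\tau,
\]
the passage from $\phi_k$ to $\phi_{k+1}$ being justified again by the observation (as $\phi_k$ even forces $\phi_k'$ odd). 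Since each $\phi_k$ is continuous at $0$, all one-sided derivatives of $\tilde K$ exist at $\lambda = 0$, so $\tilde K \in C^\infty([0,\infty))$; and since $\phi_k \in \Sz(\RR)$, one has $|\tilde K^{(k)}(\lambda)| \lesssim_M (1+\sqrt\lambda)^{-M} \lesssim_M (1+\lambda)^{-M/2}$ for every $M$, so $\tilde K$ and all its derivatives decay faster than any power of $\lambda$.

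Finally I would extend $\tilde K$ from $[0,\infty)$ to an element of $\Sz(\RR)$. As $\tilde K$ is smooth up to the boundary with rapidly decreasing derivatives, Seeley's extension theorem (or a direct construction via Borel's lemma matching the Taylor data of $\tilde K$ at $0$) yields a smooth extension to $\RR$; multiplying by a cutoff that equals $1$ on $[-1,\infty)$ and is supported in $[-2,\infty)$ leaves the values on $[0,\infty)$ unchanged and forces rapid decay as $\lambda \to -\infty$, so the result lies in $\Sz(\RR)$. The main obstacle is the first, local part --- establishing $C^\infty$ regularity at $\lambda = 0$ in spite of the singularity of $\lambda \mapsto \sqrt\lambda$ --- which is precisely what the oddness/evenness bookkeeping above is designed to handle; the extension step is routine.
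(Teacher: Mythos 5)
Your proof is correct, but note that the paper does not actually prove this proposition: it records it as a classical fact ``essentially due to Whitney'' with a citation to his 1943 note on differentiable even functions, and leaves the details to the reader. Your argument is the standard elementary proof of that result, augmented by exactly the decay bookkeeping that the Schwartz-class statement requires: the key lemma (an odd Schwartz function divided by $\tau$ is again an even Schwartz function) drives the induction $\tilde K^{(k)}(\lambda)=2^{-k}\phi_k(\sqrt\lambda)$, which yields simultaneously smoothness of $\tilde K$ up to $\lambda=0$ and the bounds $|\tilde K^{(k)}(\lambda)|\lesssim_M (1+\lambda)^{-M/2}$; Whitney's theorem by itself is the purely local smooth-function statement, so your write-up in fact supplies the ``essentially'' in the paper's attribution. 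What the citation buys the paper is brevity; what your proof buys is self-containedness and, implicitly, continuity of $K\mapsto\tilde K$ with respect to the Schwartz seminorms (each $\phi_{k+1}$ is obtained from $\phi_k$ by differentiating and dividing by $\tau$, both bounded operations on $\Sz(\RR)$), which is in the spirit of the Fr\'echet-space statements elsewhere in the paper even though it is not needed for Proposition~\ref{prp:even_schwartz} itself. Two steps you leave implicit are standard but worth naming explicitly: passing from ``$\tilde K^{(k)}$ extends continuously to $\lambda=0$'' to ``the $k$-th one-sided derivative of $\tilde K$ exists at $0$ and equals that limit'' uses the mean value theorem, and in the final step one should note that multiplying the Seeley/Borel extension by the cutoff preserves the Schwartz estimates because the cutoff is a fixed smooth function with bounded derivatives and the product agrees with $\tilde K$ on $\Rnon$.
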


Hence, to complete the proof of Theorem \ref{thm:mainmain}, we are left with the case $G = \Mot$. Recall that the group $\Mot$ is the semidirect product $\RR^2 \rtimes \TT$, where $\TT$ acts on $\RR^2$ by rotations. Up to identifying $\RR^2$ with $\CC$ and $\TT$ with the unit circle in $\CC$, we can write the group law as
\[
(z,e^{i\theta}) \cdot (z',e^{i\theta'}) = (z+e^{i\theta} z',e^{i(\theta+\theta')})
\]
for all $(z,e^{i\theta}),(z',e^{i\theta'}) \in \CC \times \TT$. It is immediately verified that the product of the Lebesgue measure on $\CC$ and the normalized Haar measure on $\TT$ is a Haar measure on $\Mot$. Moreover, if we write $z = x+iy$, the basis $X,Y,T$ of the Lie algebra $\mot$ of $\Mot$ obtained by extending $\partial_x,\partial_y,\partial_\theta$ to left-invariant vector fields satisfies the commutation relations
\begin{equation}\label{eq:motcr}
[T,X] = Y, \qquad [T,Y] = -X, \qquad [X,Y] = 0.
\end{equation}

\begin{prp}
In the above coordinates, the Schwartz class $\Sz(\Mot)$ on the group $\Mot$ coincides as a Fr\'echet space with the Schwartz class $\Sz(\CC \times \TT)$ on the abelian group $\CC \times \TT$.
\end{prp}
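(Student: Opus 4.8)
The plan is to show that the two Schwartz classes coincide by exhibiting a common system of seminorms, using the fact that the Schwartz class on a group of polynomial growth is independent of the neighbourhood $U$ and, crucially, that $\tau_U$ may be replaced by the distance from the identity with respect to any connected distance (as recalled after \eqref{eq:szseminorm}). Since $\Mot$ and $\CC \times \TT$ share the same underlying manifold $\CC \times \TT$ and the same Haar measure, they have the same space $C^\infty$ and the same $L^p$ norms; so the only potential discrepancies are (a) the algebra $\Diff(G)$ of left-invariant differential operators, and (b) the weight $(1+\tau_U)^N$ encoding the growth.

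First I would compare the two enveloping algebras. The abelian group $\CC \times \TT$ has left-invariant vector fields spanned by $\partial_x,\partial_y,\partial_\theta$, whereas for $\Mot$ the left-invariant fields $X,Y,T$ extend $\partial_x,\partial_y,\partial_\theta$ but satisfy the nontrivial relations \eqref{eq:motcr}. The key observation is that at each point the two frames span the same tangent space, and more precisely $X,Y$ are obtained from $\partial_x,\partial_y$ by a rotation depending smoothly (and $\TT$-periodically, hence boundedly with all derivatives) on $\theta$, while $T=\partial_\theta$ on the nose. Thus each generator of $\Diff(\Mot)$ is a $C^\infty$ combination, with bounded coefficients and bounded derivatives, of the generators of $\Diff(\CC\times\TT)$, and vice versa. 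Consequently any $W \in \Diff(\Mot)$ can be written as a finite sum $\sum_\alpha a_\alpha \, D^\alpha$ where $D^\alpha$ are the abelian monomials and $a_\alpha$ are smooth bounded functions with bounded derivatives, and symmetrically; this lets me dominate each $\Sz(\Mot)$-seminorm by a finite sum of $\Sz(\CC\times\TT)$-seminorms and conversely.

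Next I would compare the weights. Both groups are of polynomial growth, and both are generated by a common compact symmetric neighbourhood $U$ of the identity contained in $\CC \times \TT$; the word-length functions $\tau_U$ computed in the two group structures are comparable because the rotation action distorts distances only boundedly in the compact $\TT$-direction. More conceptually, I would invoke the connected-distance freedom: the Euclidean (product) distance on $\CC\times\TT$ is a connected distance for \emph{both} group structures, so it may be used to compute the weight in \eqref{eq:szseminorm} for either group. With a common weight and the mutual domination of differential operators from the previous step, each defining seminorm of $\Sz(\Mot)$ is bounded by finitely many defining seminorms of $\Sz(\CC\times\TT)$ and conversely, which gives equality of the two function spaces together with equivalence of their Fr\'echet topologies.

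The main obstacle is the second step, namely verifying that the word-length (or connected-distance) weights agree up to the comparability needed for the polynomially-weighted $L^p$ norms; one must be careful that the nonabelian group law $(z,e^{i\theta})\cdot(z',e^{i\theta'}) = (z+e^{i\theta}z',e^{i(\theta+\theta')})$ can in principle move the $\CC$-coordinate by a rotation at each step of a word, so that \emph{a priori} distances in $\Mot$ need not coincide with Euclidean distances on $\CC\times\TT$. The resolution is that rotations are isometries of $\CC$, so each generator step changes the modulus $|z|$ by at most a fixed amount; hence the two word-length functions are comparable (indeed bi-Lipschitz equivalent up to additive constants), which is exactly what polynomial weights require. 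The first step, by contrast, is essentially the routine change-of-frame computation recorded in \eqref{eq:motcr} and carries no real difficulty.
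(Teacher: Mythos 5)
Your proposal is correct and takes essentially the same approach as the paper: the paper compares seminorms by the change of frame $Z^p \bar Z^q T^r = e^{i(p-q)\theta} \partial_z^p \partial_{\bar z}^q \partial_\theta^r$ (the cleanest form of your bounded-coefficient expansion of $\Diff(\Mot)$ in terms of abelian monomials and vice versa) and by computing $V^k = (kU)\times\TT$, so that $\tau_V(z,e^{i\theta}) \sim 1+|z|$, which is exactly your word-length comparison resting on rotations being isometries of $\CC$. Your alternative packaging of the weight step—that the Euclidean product distance on $\CC\times\TT$ is left-invariant for \emph{both} group structures and hence is an admissible connected distance for either, by the remark following \eqref{eq:szseminorm}—is a valid and slightly more conceptual variant of the same computation.
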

\begin{proof}
Let $U$ be the closed unit disc in $\CC$. Then $V = U \times \TT$ is a compact neighbourhood of the identity in $\Mot$, and
\[
V^k = (kU) \times \TT.
\]
Hence, for all $(z,e^{i\theta}) \in \Mot$,
\[
\tau_V(z,e^{i\theta}) = \min\{ k \in \NN \tc (z,e^{i\theta}) \in V^k \} \sim 1+|z|
\]

In addition, if we write $Z = (X-iY)/2$ and $\bar Z = (X+iY)/2$, then
\[
T  =\partial_\theta, \qquad Z = e^{i\theta} \partial_{z}, \qquad \bar Z = e^{-i\theta} \partial_{\bar z},
\]
where $\partial_{z} = (\partial_x - i \partial_y)/2$ and $\partial_{\bar z} = (\partial_x + i \partial_y)/2$, and consequently
\[
Z^p \bar Z^q T^r = e^{i(p-q) \theta} \partial_{z}^p \partial_{\bar z}^q \partial_\theta^r
\]
for all $p,q,r \in \NN$.

By combining this information, it is easily shown that every Schwartz seminorm on $\Mot$ is controlled by some Schwartz seminorm on $\CC \times \TT$, and vice versa.
\end{proof}

\begin{lem}\label{lem:mot_automorphism}
Every automorphism of the Lie algebra $\mot$ of $\Mot$ is the differential of a Lie group automorphism of $\Mot$.
\end{lem}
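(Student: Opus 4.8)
The plan is to reduce the statement to the universal covering group, where the integrability of Lie algebra automorphisms is automatic, and then to descend. Let $\widetilde{\Mot} = \CC \rtimes \RR$ denote the simply connected covering group of $\Mot$, with group law $(z,t)\cdot(z',t') = (z+e^{it}z', t+t')$ and covering homomorphism $q \tc \widetilde{\Mot} \to \Mot$ given by $q(z,t) = (z,e^{it})$; since $q$ is a local isomorphism, $\widetilde{\Mot}$ and $\Mot$ share the same Lie algebra $\mot$, with $dq$ the identity. Because $\widetilde{\Mot}$ is simply connected, the differentiation map $\Phi \mapsto d\Phi$ is a bijection from the automorphism group $\mathrm{Aut}(\widetilde{\Mot})$ onto $\mathrm{Aut}(\mot)$; in particular, any given $\phi \in \mathrm{Aut}(\mot)$ integrates to a unique $\tilde\Phi \in \mathrm{Aut}(\widetilde{\Mot})$ with $d\tilde\Phi = \phi$. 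It then suffices to show that $\tilde\Phi$ descends through $q$ to an automorphism $\Phi$ of $\Mot$.

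The key step is to identify the kernel of $q$. Clearly $\ker q = \{(0,2\pi n) \tc n \in \ZZ\}$, and a direct computation with the group law shows that this subgroup is exactly the center $Z(\widetilde{\Mot})$: the pair $(z_0,t_0)$ is central if and only if $(e^{it}-1)z_0 = (e^{it_0}-1)z$ for all $(z,t)$, which (taking $z=0$ and $t=0$ in turn) forces $z_0 = 0$ and $t_0 \in 2\pi\ZZ$. Since the center is a characteristic subgroup, $\tilde\Phi(Z(\widetilde{\Mot})) = Z(\widetilde{\Mot})$, and hence $\tilde\Phi(\ker q) = \ker q$. Therefore $\tilde\Phi$ factors through $q$, yielding a unique automorphism $\Phi$ of $\Mot \cong \widetilde{\Mot}/\ker q$ with $\Phi \circ q = q \circ \tilde\Phi$. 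Differentiating this identity and using that $dq$ is the identity of $\mot$, one obtains $d\Phi = d\tilde\Phi = \phi$, as required.

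The only real obstacle is precisely the non-simple-connectedness of $\Mot$, which is what makes the lemma a genuine statement rather than an immediate consequence of the integrability of Lie algebra automorphisms; the point above is that this obstruction evaporates because the covering kernel coincides with the (characteristic) center of $\widetilde{\Mot}$, so that no compatibility condition on $\phi$ is needed. As an explicit cross-check one can instead classify $\mathrm{Aut}(\mot)$ directly: since $\Span\{X,Y\}$ is the derived algebra, hence a characteristic ideal, any $\phi$ preserves it, and the relations \eqref{eq:motcr} force $\phi$ to act on $\Span\{X,Y\}$ by a conformal or anti-conformal linear map and to send $T \mapsto \pm T + v$ with $v \in \Span\{X,Y\}$. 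Each such $\phi$ is then realized on $\Mot$ by composing the dilations $(z,e^{i\theta}) \mapsto (rz,e^{i\theta})$, the inner automorphisms given by conjugation by $(0,e^{i\alpha})$ and by $(w,1)$, and the complex conjugation $(z,e^{i\theta}) \mapsto (\bar z,e^{-i\theta})$, all of which are readily verified to be group automorphisms; I expect the descent argument of the first two paragraphs to be the cleaner route, with this computation serving mainly as a sanity check on the shape of $\mathrm{Aut}(\mot)$.
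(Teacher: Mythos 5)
Your proof is correct and takes essentially the same route as the paper's: pass to the universal covering group $\RR^2 \rtimes \RR$, observe that the kernel of the covering map coincides with the centre (hence is a characteristic subgroup), and conclude that the unique automorphism of the cover integrating a given Lie algebra automorphism descends to $\Mot$. Your write-up merely makes explicit the computation identifying the kernel with the centre, and adds an optional classification of $\mathrm{Aut}(\mot)$ as a cross-check, neither of which changes the argument.
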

\begin{proof}
The universal covering group of $\Mot$ is $\RR^2 \rtimes \RR$, with group law
\[
(z,t) \cdot (z',t') = (z+e^{it} z',t+t'),
\]
and the kernel of the covering map $(z,t) \mapsto (z,e^{it})$ is the centre of $\RR^2 \rtimes \RR$, which is a characteristic subgroup of $\RR^2 \rtimes \RR$. Consequently, every automorphism of the universal covering group $\RR^2 \rtimes \RR$ descends to an automorphism of $\Mot$, and every automorphism of the Lie algebra $\mot$ of $\Mot$ corresponds to a Lie group automorphism of $\Mot$.
\end{proof}

The following result shows that sub-Laplacians on $\Mot$ have a ``normal form'' (see \cite{baudoin_subelliptic_2015} for similar results).

\begin{prp}\label{prp:classif_sublap_mot}
Let $\opL$ be a sub-Laplacian on $\Mot$. Then there exists a basis $X,Y,T$ of the Lie algebra of $\Mot$ satisfying the commutation relations \eqref{eq:motcr} and such that either
\begin{equation}\label{eq:sub1}
-\alpha \opL = T^2 + Y^2 + \beta (X^2+Y^2)
\end{equation}
for some $\alpha > 0$ and $\beta \geq 0$, or
\begin{equation}\label{eq:sub2}
-\alpha \opL = T^2 + X^2+Y^2
\end{equation}
for some $\alpha > 0$.
\end{prp}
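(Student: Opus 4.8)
The plan is to turn the classification into a linear-algebra problem on $\mot$ and then normalise using its automorphism group. Writing $\opL = -\sum_j W_j^2$ with $W_j \in \mot$, the operator depends only on the symmetric tensor $Q = \sum_j W_j \odot W_j \in \mathrm{Sym}^2(\mot)$: the symmetrisation (PBW) isomorphism $\mathrm{Sym}(\mot) \to \Diff(\Mot)$ sends $W \odot W$ to $W^2$, and it is faithful, so $\opL$ is determined by, and determines, the positive semidefinite symmetric matrix $M$ of $Q$ in the basis $(T,X,Y)$. A change of basis of $\mot$ by a Lie algebra automorphism $\phi$ automatically preserves the relations \eqref{eq:motcr} (since $\phi$ preserves brackets), and it replaces $M$ by the congruent matrix; by Lemma \ref{lem:mot_automorphism} every such $\phi$ is moreover the differential of a group automorphism, so the normal form is genuinely intrinsic. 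Thus it suffices to bring $M$, by congruence under $\mathrm{Aut}(\mot)$ together with multiplication by a positive scalar, to one of the two stated diagonal forms.

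First I would record the automorphisms used, each checked directly against \eqref{eq:motcr}: the shears $T \mapsto T + v$ (for $v \in \lie{v} = \Span\{X,Y\}$, fixing $X,Y$); the maps $A \in \mathrm{GL}(\lie{v})$ fixing $T$; and the quarter-turn $X \mapsto Y$, $Y \mapsto -X$, which interchanges the two eigenvalues on $\lie{v}$. The crucial structural point is that $\ad_T|_{\lie{v}}$ is the rotation $J$ with $JX = Y$ and $JY = -X$, so an admissible $A$ must commute with $J$, i.e.\ be conformal, $A = r R_\psi$ (a rotation composed with a dilation). Hence one cannot rescale the two directions of $\lie{v}$ independently, and this is exactly what forces a genuine modulus $\beta$ to survive.

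Next I would carry out the normalisation. The H\"ormander condition forces $\mathrm{im}(M) \not\subseteq \lie{v}$ (otherwise the horizontal space lies in the abelian ideal $\lie{v}$ and cannot generate $\mot$), hence $M_{TT} > 0$; a shear then completes the square and removes the $T$–$\lie{v}$ cross terms, giving $M = \mathrm{diag}(a, C)$ with $a > 0$ and $C$ positive semidefinite on $\lie{v}$. A rotation of $\lie{v}$ diagonalises $C$, so $-\opL = a T^2 + \lambda_1 X^2 + \lambda_2 Y^2$; using the quarter-turn I order the eigenvalues and assign the larger one to $Y$. H\"ormander again excludes $\lambda_1 = \lambda_2 = 0$. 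If the two eigenvalues are equal, a dilation equalises all three coefficients and an overall scaling yields \eqref{eq:sub2}. If they differ, with $\lambda_2 > \lambda_1$, a dilation by $r$ with $r^2 = a/(\lambda_2 - \lambda_1)$ turns the $\lie{v}$-coefficients into $a\beta$ and $a(1+\beta)$ with $\beta = \lambda_1/(\lambda_2 - \lambda_1) \ge 0$, and dividing by $a$ gives \eqref{eq:sub1}.

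The main obstacle is the first, conceptual step: recognising that $\opL$ corresponds faithfully to the symmetric matrix $M$ and that the available symmetries are precisely congruence by $\mathrm{Aut}(\mot)$. Once this is in place the rest is routine, the only genuine subtlety being the conformality constraint on $\lie{v}$: it is why two families rather than a single round form appear, and why the eigenvalue swap must be realised by the rotation $X \mapsto Y$, $Y \mapsto -X$ rather than the naive interchange $X \leftrightarrow Y$, which violates \eqref{eq:motcr}. Finally I would verify the boundary cases—$\beta = 0$ giving the genuinely rank-two operator $T^2 + Y^2$, and $\beta \to \infty$ approaching but never reaching the round form \eqref{eq:sub2}—to confirm that the two families are disjoint and exhaustive.
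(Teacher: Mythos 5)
Your proof is correct and follows essentially the same route as the paper's: your shear is exactly the paper's completing-the-square identity $\sum_j(a_jT+U_j)^2=(T+U)^2+\sum_j U_j^2-U^2$, your rotation diagonalising $C$ is the paper's application of the spectral theorem to the positive semidefinite form on $\Span\{X,Y\}$, your dilation reproduces the paper's rescaled basis $\tilde T,\sqrt{c-b}\,\tilde X,\sqrt{c-b}\,\tilde Y$ (with the same $\beta=\lambda_1/(\lambda_2-\lambda_1)$), and H\"ormander's condition is invoked at the same two points. The packaging via $\mathrm{Sym}^2(\mot)$, PBW faithfulness and the conformality constraint on automorphisms of $\mot$ is a pleasant intrinsic gloss, and the closing disjointness remarks concern uniqueness, which the statement does not ask for; the underlying argument is the same.
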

\begin{proof}
Let $X,Y,T$ be the basis of $\mot$ extending $\partial_x,\partial_y,\partial_t$ as above. An arbitrary sub-Laplacian $\opL$ on $SE(2)$ has the form $\opL = -\sum_j X_j^2$, where $X_j = a_j T + U_j$ for some $a_j \in \RR$ and $U_j \in \Span\{X,Y\}$. Note that not all $a_j$ are zero (otherwise $\opL$ would not be hypoelliptic), so, up to rescaling $\opL$, we may also assume that $\sum_j a_j^2 = 1$. Hence
\[
-\opL = \sum_j (a_j T + U_j)^2 = T^2 + \sum_j U_j^2 + T U + U T = (T+U)^2 + \sum_j U_j^2 - U^2
\]
where $U = \sum_j a_j U_j \in \Span\{X,Y\}$.

Note that $\sum_j U_j^2 - U^2 = (\sum_j a_j^2) (\sum_j U_j^2) - (\sum_j a_j U_j)^2$ can be thought of as a quadratic form on the dual of $\Span\{X,Y\}$, which is positive semidefinite by the Cauchy--Schwarz inequality. Hence, by the spectral theorem, there exists $s \in \RR$ such that, if $\tilde X = (\cos s) X - (\sin s) Y$ and $\tilde Y = (\sin s) X  + (\cos s) Y$, then $\sum_j U_j^2 - U^2 = b \tilde X^2 + c \tilde Y^2$ for some $b,c \in \Rnon$ with $b \leq c$. If we set $\tilde T = T + U$, then
\[
-\opL = \tilde T^2 + b\tilde X^2 + c \tilde Y^2.
\]
Note that $b,c$ are not both zero (otherwise $\opL$ would not be hypoelliptic).

If $b=c$, then we reduce to the case \eqref{eq:sub2} by choosing the basis $\tilde T,\sqrt{b} \tilde X,\sqrt{b} \tilde Y$ of $\mot$.

If $b<c$, then we can write $-\opL = \tilde T^2 + (c-b)\tilde Y^2 + b(\tilde X^2 + \tilde Y^2)$, and we reduce to the case \eqref{eq:sub1} by choosing the basis $\tilde T,\sqrt{c-b} \tilde X,\sqrt{c-b} \tilde Y$ of $\mot$.
\end{proof}

The following result completes the proof of Theorem \ref{thm:mainmain}.

\begin{prp}\label{prp:se2sz}
Statement \eqref{st:szweak} holds for any sub-Laplacian $\opL$ on $G = \Mot$.
\end{prp}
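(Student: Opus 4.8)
The plan is to reduce to the two normal forms of Proposition~\ref{prp:classif_sublap_mot} and then read off $F$ from the action of $\opL$ in the infinite-dimensional irreducible representations of $\Mot$, via Proposition~\ref{prp:rep_fc_cont}. Since statement~\eqref{st:szweak} is invariant under automorphisms of $\Mot$ (an automorphism carries $\opL$ to another sub-Laplacian, transforms $\Kern_{F(\opL)}$ by the induced diffeomorphism, and preserves $\Sz(\Mot)$) and under rescaling of $\opL$, Lemma~\ref{lem:mot_automorphism} and Proposition~\ref{prp:classif_sublap_mot} let me assume that $-\opL$ is either $T^2+X^2+Y^2$ or $T^2+Y^2+\beta(X^2+Y^2)$ with $\beta\ge0$. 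For $r>0$, the group $\Mot$ has an irreducible unitary representation $\pi_r$ on $L^2(\TT)$ in which, in a suitable coordinate $\psi\in\TT$, the generators act by $T\mapsto\partial_\psi$, $X\mapsto ir\cos\psi$, $Y\mapsto ir\sin\psi$. Hence $d\pi_r(\opL)$ is the Schr\"odinger operator $M_r=-\partial_\psi^2+r^2$ in the first case, and the Mathieu operator $M_r=-\partial_\psi^2+r^2\sin^2\psi+\beta r^2$ in the second. Writing $K:=\Kern_{F(\opL)}\in\Sz(\Mot)\subseteq L^1(\Mot)$, Proposition~\ref{prp:rep_fc_cont} gives $\pi_r(K)=F(M_r)$ for every $r>0$.

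The key idea is to probe $F$ through the \emph{ground state} of $M_r$ alone. Let $\lambda_0(r)$ be the bottom eigenvalue of $M_r$ and $h^r_0$ a normalized ground eigenfunction; the ground state is simple, so $h^r_0$ is essentially unique. Since $h^r_0$ is an eigenvector, $\Phi(r):=\langle\pi_r(K)h^r_0,h^r_0\rangle=\langle F(M_r)h^r_0,h^r_0\rangle=F(\lambda_0(r))$. The point is that $\lambda_0$ already sweeps out the whole spectrum: using standard facts about the Mathieu operator (to be developed in Section~\ref{s:mathieu}), $\lambda_0$ is even in $r$, so $\lambda_0(r)=\Lambda(r^2)$ for a smooth strictly increasing $\Lambda:\Rnon\to\Rnon$ with $\Lambda(0)=0$, $\Lambda'(0)>0$ and $\Lambda(s)\to\infty$ (in the first case trivially $\lambda_0(r)=r^2$, $h^r_0\equiv\mathrm{const}$). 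Thus $\lambda_0:\Rnon\to\Rnon$ is a bijection and, once we know that $\Phi$ extends to an even Schwartz function on $\RR$, Proposition~\ref{prp:even_schwartz} produces $\Xi\in\Sz(\Rnon)$ with $\Xi(r^2)=\Phi(r)$; then $F=\Xi\circ\Lambda^{-1}$, and since $\Lambda^{-1}$ is a smooth change of variable of at most polynomial growth this forces $F\in\Sz(\Rnon)$.

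Everything therefore reduces to showing that $\Phi\in\Sz(\RR)$. Computing $\pi_r(K)$ as an integral operator on $L^2(\TT)$, its kernel is $(\psi,\psi')\mapsto\widehat K(re^{i\psi'},e^{i(\psi'-\psi)})$, where $\widehat K$ denotes the Euclidean Fourier transform of $K$ in the $\CC$-variable; since $K\in\Sz(\Mot)$, this kernel and all its $r$-derivatives decay faster than any power of $r$, uniformly in $(\psi,\psi')$. Rapid decay of $\Phi$ itself is then immediate from $\|h^r_0\|_2=1$, and rapid decay of each derivative $\Phi^{(j)}(r)$ follows provided $h^r_0$ and its $r$-derivatives grow at most polynomially in $r$ in $L^2(\TT)$.

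The hard part will be exactly this control of the ground-state data in the double-well regime of the Mathieu operator (the case $-\opL=T^2+Y^2+\beta(X^2+Y^2)$): as $r\to\infty$ the two lowest eigenvalues of $M_r$ become exponentially close (tunnelling splitting), so the naive resolvent/perturbation bound on $\partial_r h^r_0$ has an exponentially small denominator and blows up. I expect to resolve this using the reflection symmetries of $M_r$ (invariance under $\psi\mapsto-\psi$ and $\psi\mapsto\psi+\pi$): the ground state $h^r_0$ is the bottom of the totally symmetric sector, where it is isolated with only a polynomially small gap, whereas its near-degenerate partner lies in a different symmetry sector and never couples to $h^r_0$ through $\partial_r M_r=2r\sin^2\psi$ (which respects the symmetries). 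Restricting the ground-state spectral projection to this sector should give the smooth, polynomially bounded dependence of $h^r_0$ on $r$, and hence $\Phi\in\Sz(\RR)$. This symmetry analysis of the eigenfunctions $\ce_k,\se_k$ is the technical core, and is precisely what Section~\ref{s:mathieu} is designed to supply.
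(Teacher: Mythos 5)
Your proposal is correct and follows essentially the same route as the paper's proof: reduction to the normal forms via Lemma~\ref{lem:mot_automorphism} and Proposition~\ref{prp:classif_sublap_mot}, probing $F$ through the ground-state matrix coefficient $\langle \pi_r(\Kern_{F(\opL)})H^{r^2},H^{r^2}\rangle$, evenness in $r$ plus Proposition~\ref{prp:even_schwartz}, inversion of the eigenvalue map $q\mapsto\lambda^q+\beta q$ using polynomial bounds on its derivatives and on $1/\psi'$, and the symmetry-sector analysis of the Mathieu eigendata (with polynomially large gaps within the even $\pi$-periodic sector) that Section~\ref{s:mathieu} supplies. The only cosmetic difference is that you treat the case $-\opL=T^2+X^2+Y^2$ by the same representation-theoretic probe, whereas the paper disposes of it by viewing $\Delta$ as a Laplacian on the abelian group $\CC\times\TT$.
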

\begin{proof}
Let $\alpha \in \Rpos$, $\beta \in \Rnon$ and $X,Y,T$ be the basis of $\mot$ given by Proposition \ref{prp:classif_sublap_mot} corresponding to a given sub-Laplacian $\opL$ on $\Mot$, and define
\[
\Delta = -(T^2+X^2+Y^2), \qquad \opL_0 = -(T^2 + Y^2), \qquad \Delta_0 = -(X^2+Y^2).
\]
Note that, to the purpose of proving statement \eqref{st:szweak}, by rescaling $\opL$ we may assume that $\alpha=1$. Moreover, by Lemma \ref{lem:mot_automorphism}, up to an automorphism of $\Mot$ we may assume that $X,Y,T$ is the ``standard basis'' extending $\partial_x,\partial_y,\partial_\theta$; indeed, note that automorphisms of $\Mot$ preserve the Schwartz class $\Sz(\Mot)$.

It is easily seen that
\[
\Delta = -(\partial_\theta^2+\partial_x^2+\partial_y^2).
\]
In particular, if $\opL = \Delta$, then $\opL$ coincides with a translation-invariant Laplacian on the abelian group $\CC \times \TT$, and statement \eqref{st:szweak} follows by what has already been proved (note that $\CC \times \TT$ has a quotient isomorphic to $\RR$). So it remains only to consider the case $\opL = \opL_0 + \beta \Delta_0$.

Let $\TT$ be given the normalised Haar measure. A family of irreducible unitary representations $\pi_r$ of $\Mot$ on $L^2(\TT)$, where $r \in \Rpos$, is given as follows:
\[
\pi_r(z,e^{i\theta}) f(e^{i\phi})  = e^{i r\Re(z e^{i\phi})} f(e^{i(\theta+\phi)})
\]
(cf., e.g., \cite[eq.\ (3)]{dooley_extension_1984}). Correspondingly, for all $K \in L^1(\Mot)$ and $f \in L^2(\TT)$,
\begin{equation}\label{eq:motrepn}\begin{split}
\pi_r(K) f(e^{i\phi}) 
&= \int_\Mot K(z,e^{i\theta}) \,  \pi_r(-e^{-i\theta}z,e^{-i\theta}) \, f(e^{i\phi}) \,dz \,d\theta \\
&= \int_\Mot K(z,e^{i\theta}) \,  e^{-ir\Re(z e^{i(\phi-\theta)})} \, f(e^{i(\phi-\theta)}) \,dz \,d\theta \\
&= \int_\Mot K(z,e^{i(\phi-\theta)}) \,  e^{-ir\Re(z e^{i\theta})} \, f(e^{i\theta}) \,dz \,d\theta \\
&= \int_\Mot K(z,e^{i(\phi-\theta)}) \,  e^{-i\Re(z \overline{r e^{-i\theta}})} \, f(e^{i\theta}) \,dz \,d\theta \\
&= \int_\TT K(\widehat{re^{-i\theta}},e^{i(\phi-\theta)}) f(e^{i\theta}) \,d\theta,
\end{split}\end{equation}
where $K(\widehat\cdot, \cdot)$ denotes the partial Euclidean Fourier transform of $K$ along $\CC$. Moreover
\[
d\pi_r(X) = i r \cos\phi, \qquad d\pi_r(Y) = -i r \sin\phi, \qquad d\pi_r(T) = \partial_\phi.
\]

Note now that $\opL_0$ and $\Delta_0$ commute; indeed $\Delta_0$ is in the centre of $\Diff(\Mot)$ and
\[
d\pi_r(\Delta_0) = r^2,
\]
while
\[
d\pi_r(\opL_0) = -\partial_\phi^2 + r^2\sin^2\phi = \opML_{r^2},
\]
where $\opML_{q}$ denotes, for all $q \in \RR$, the Mathieu operator of Section \ref{s:mathieu}.

Let $\lambda^q$ and $H^q$ be the minimum eigenvalue and the corresponding normalized eigenfunction of $\opML_q$ (denoted in Section \ref{s:mathieu} as $\lambda_{(0,0),0}^q$ and $H_{(0,0),0}^q$ respectively).
Then, for all continuous functions $F : \Rnon \to \CC$ with $\Kern_{F(\opL)} \in \Sz(\Mot)$, we can write, by \eqref{eq:motrepn} and Proposition \ref{prp:rep_fc_cont},
\[\begin{split}
F(\lambda^{r^2} + \beta r^2) &= \langle F(d\pi_r(\opL)) H^{r^2}, H^{r^2} \rangle \\
&= \langle \pi_r(\Kern_{F(\opL)}) H^{r^2}, H^{r^2} \rangle \\
&= G_F(r),
\end{split}\]
where
\begin{equation}\label{eq:kernel}
G_F(r) = \int_\TT \int_\TT \Kern_{F(\opL)}(\widehat{re^{-i\theta}},e^{i(\phi-\theta)}) \, H^{r^2}(e^{i\theta}) \, H^{r^2}(e^{i\phi}) \,d\theta \,d\phi.
\end{equation}

Note that the above formula defines $G_F$ for all $r \in \RR$. Moreover, since $\Sz(\Mot) = \Sz(\CC \times \TT) \cong \Sz(\CC) \mathop{\hat\otimes} C^\infty(\TT)$ and the Euclidean Fourier transform preserves the Schwartz class, from Propositions \ref{prp:eigen_smooth} and \ref{prp:eigen_estimates_poly} it is easily seen that $G_F \in \Sz(\RR)$. In addition
\begin{equation}\label{eq:kernel_parity}
G_F(-r) 
= \int_\TT \int_\TT \Kern_{F(\opL)}(\widehat{re^{-i(\theta+\pi)}},e^{i(\phi-\theta)}) \, H^{r^2}(e^{i\theta}) \, H^{r^2}(e^{i\phi}) \,d\theta \,d\phi = G_F(r),
\end{equation}
since $H^{r^2}$ is $\pi$-periodic. Hence $G_F(r) = \tilde G_F(r^2)$ for some $\tilde G_F \in \Sz(\RR)$ by Proposition \ref{prp:even_schwartz} and $F(\lambda^q + \beta q) = \tilde G_F(q)$ for all $q \in \Rnon$.

Let $\psi(q) = \lambda^q + \beta q$. By Propositions \ref{prp:eigen_continuous}, \ref{prp:eigen_smooth} and \ref{prp:eigen_asymp}, $\psi : \RR \to \RR$ is smooth, bijective, strictly increasing and $\psi(0) = 0$. In particular $\tilde G_F \circ \psi^{-1}$ is smooth; what we need to check is that its restriction to $\Rnon$ is in $\Sz(\Rnon)$.

On the other hand, for all $k \in \NN$,
\[
(\tilde G_F \circ \psi^{-1})^{(k)}(t) = \sum_{j=0}^k (\tilde G_F)^{(j)} (\psi^{-1}(t)) \, P_{k,j}((\psi^{-1})'(t),\dots,(\psi^{-1})^{(k)}(t))
\]
for certain polynomials $P_{k,j}$ (where $P_{0,0} = 1$ and $P_{k,0} = 0$ for all $k>0$),
while, for all $k > 0$,
\[
(\psi^{-1})^{(k)}(t) = \frac{Q_k(\psi'(\psi^{-1}(t)),\dots,\psi^{(k)}(\psi^{-1}(t)))}{ (\psi'(\psi^{-1}(t))^{2k-1} }.
\]
for certain polynomials $Q_k$. Consequently
\begin{equation}\label{eq:comp_inv_deriv}
(\tilde G_F \circ \psi^{-1})^{(k)}(t) = R_{F,k}(\psi^{-1}(t)),
\end{equation}
where
\[
R_{F,k}(q) = \sum_{j=0}^k (\tilde G_F)^{(j)} (q) \, \frac{\tilde P_{k,j}(\psi'(q),\dots,\psi^{(k)}(q))}{(\psi'(q))^{a(k,j)}}
\]
for certain polynomials $P_{k,j}$ and exponents $a(k,j) \in \NN$.

Since $\psi(q) = \lambda^q + \beta q$, from Proposition \ref{prp:eigen_estimates_poly} it follows immediately that its derivatives have at most polynomial growth as $q \to \infty$. Moreover $1/\psi'(q) \lesssim (1+q)^{1/2}$; this is clear if $\beta > 0$, because $\psi'(q) \geq \beta$; otherwise, $\psi'(q) = \partial_q \lambda^q \sim \lambda_q/q \sim q^{1/2}$ as $q \to \infty$ by Proposition \ref{prp:eigen_asymp}. Since $\tilde G_F \in \Sz(G)$, it follows immediately that $R_{F,k}$ decays faster than any polynomial on $\Rnon$.

On the other hand, by Proposition \ref{prp:eigen_asymp},
\[
1 + \psi(q) \sim 
\begin{cases}
1 + q &\text{if $\beta > 0$,}\\
(1 + q)^{1/2} &\text{if $\beta = 0$,}
\end{cases}
\]
that is
\[
1 + t \sim 
\begin{cases}
1 + \psi^{-1}(t) &\text{if $\beta > 0$,}\\
(1 + \psi^{-1}(t))^{1/2} &\text{if $\beta = 0$,}
\end{cases}
\]
hence $R_{F,k} \circ \psi^{-1}$ also decays faster than any polynomial on $\Rnon$. From \eqref{eq:comp_inv_deriv} we can conclude that $\tilde G_F|_{\Rnon} \in \Sz(\Rnon)$.
\end{proof}

\section{The kernel transform and its adjoint}\label{s:kerneltransform}

From the boundedness properties of the operator $T_\opL : F \mapsto K_{F(\opL)}$ and its adjoint $T_\opL^*$, it follows that $T_\opL$ is an integral operator $\chi_\opL$, whose integral kernel satisfies a number of properties summarized below (see \cite{tolomeo_tesilaurea_2015} for a more detailed study).

\begin{prp}\label{prp:kerneltransformkernel}
There exists a unique $\chi_\opL \in L^\infty(\Rnon \times G,\Pmeas_\opL \times \meas)$ such that, for all $F \in L^1 \cap L^\infty(\Rnon,\Pmeas_\opL)$,
\begin{equation}\label{eq:opLtransform}
\Kern_{F(\opL)}(x) = \int_\Rnon F(\lambda) \, \chi_\opL(\lambda,x)\,d\Pmeas_\opL(\lambda).
\end{equation}
Moreover the kernel $\chi_\opL$ satisfies the following properties.
\begin{enumerate}[label=\textup{(\roman*)}]
\item\label{en:ktk_real} $\chi_\opL$ is real-valued.
\item\label{en:ktk_adj} For all $k \in L^1(G)$,
\[
T_\opL^* k(\lambda) = \int_G k(x) \, \chi_\opL(\lambda,x) \,d\meas(x) \qquad\text{for }\Pmeas_\opL\text{-a.e.\ }\lambda.
\]
\item\label{en:ktk_inv} \emph{(Inversion formula for the kernel transform)} For all $F \in L^\infty(\Rnon,\Pmeas_\opL)$ such that $K_{F(\opL)} \in L^1(G)$,
\begin{equation}\label{eq:opLinversion}
F(\lambda) = \int_G \Kern_{F(\opL)}(x) \, \chi_\opL(\lambda,x)\,d\meas(x) \qquad\text{for }\Pmeas_\opL\text{-a.e.\ }\lambda.
\end{equation}
\item\label{en:ktk_smooth} For $\Pmeas_\opL$-almost all $\lambda \in \Rnon$, $\chi_\opL(\lambda,\cdot) \in C^\infty(G)$ and $\opL \chi_\opL(\lambda,\cdot) = \lambda \chi_\opL(\lambda,\cdot)$.
\end{enumerate}
\end{prp}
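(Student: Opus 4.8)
The plan is to deduce everything from the two boundedness regimes of the kernel transform $T_\opL$: the $L^2$-isometry \eqref{eq:opLplancherel} and a suitable $L^1(\Rnon,\Pmeas_\opL)\to L^\infty(G)$ bound, the latter being the source of the integral kernel via the Dunford--Pettis theorem. Concretely, I would first establish, for $F\in L^1\cap L^\infty(\Rnon,\Pmeas_\opL)$, the estimate $\|\Kern_{F(\opL)}\|_{L^\infty(G)}\le 4\,\|F\|_{L^1(\Rnon,\Pmeas_\opL)}$. For $F\ge 0$ this is sharp and essentially forced by positivity: writing $F=|H|^2$ with $H=\sqrt F\in L^2(\Rnon,\Pmeas_\opL)$, the operator $F(\opL)=H(\opL)^*H(\opL)$ has a convolution kernel whose value at the identity equals $\|\Kern_{H(\opL)}\|_{L^2(G)}^2=\|H\|_{L^2}^2=\|F\|_{L^1}$ by \eqref{eq:opLplancherel}; moreover $\Kern_{F(\opL)}\in C_0(G)$ is continuous and positive-definite, being the kernel of a nonnegative self-adjoint convolution operator, so its supremum is attained at the identity. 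Splitting a complex $F$ into four nonnegative parts gives the stated bound, and the Dunford--Pettis theorem then produces a unique $\chi_\opL\in L^\infty(\Rnon\times G,\Pmeas_\opL\times\meas)$ satisfying \eqref{eq:opLtransform}; uniqueness is obtained by testing the identity against a countable $L^1$-dense family of $F$ and using $L^\infty=(L^1)^*$ fibrewise in $x$.

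For the reality property \ref{en:ktk_real}, I would note that $\opL=-\sum_j X_j^2$ has real coefficients, hence commutes with complex conjugation, so $F(\opL)$ preserves real-valuedness for real $F$; a convolution operator commuting with conjugation has a real kernel, so $\Kern_{F(\opL)}$ is real for real $F$, and inserting real $F$ into \eqref{eq:opLtransform} forces $\operatorname{Im}\chi_\opL=0$ almost everywhere. The adjoint formula \ref{en:ktk_adj} follows by expanding the pairing $\langle T_\opL F,k\rangle_{L^2(G)}=\langle F,T_\opL^* k\rangle_{L^2(\Rnon,\Pmeas_\opL)}$ for $k\in L^1\cap L^2(G)$, substituting \eqref{eq:opLtransform}, exchanging the order of integration (legitimate since $\chi_\opL$ is bounded), and using \ref{en:ktk_real} to remove the conjugate; as the resulting right-hand side is defined for every $k\in L^1(G)$, it furnishes the asserted extension of $T_\opL^*$.

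The eigenfunction property \ref{en:ktk_smooth} I would obtain in distributional form first and then upgrade by regularity theory. From $\opL\circ F(\opL)=(\lambda F)(\opL)$ and the left-invariance of $\opL$ one has $\opL\Kern_{F(\opL)}=\Kern_{(\lambda F)(\opL)}$; pairing with $\psi\in C_c^\infty(G)$, inserting \eqref{eq:opLtransform} on both sides and applying Fubini yields
\[
\int_\Rnon F(\lambda)\Big[\langle\chi_\opL(\lambda,\cdot),\opL\psi\rangle-\lambda\,\langle\chi_\opL(\lambda,\cdot),\psi\rangle\Big]\,d\Pmeas_\opL(\lambda)=0
\]
for all $F\in L^1\cap L^\infty$, whence $(\opL-\lambda)\chi_\opL(\lambda,\cdot)=0$ in $\mathcal{D}'(G)$ for $\Pmeas_\opL$-a.e.\ $\lambda$ (taking $\psi$ in a countable dense set). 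Since a sub-Laplacian is hypoelliptic, so is $\opL-\lambda$, and therefore the locally integrable function $\chi_\opL(\lambda,\cdot)$ agrees with a smooth eigenfunction. Finally, for the inversion formula \ref{en:ktk_inv} I would apply \ref{en:ktk_adj} with $k=\Kern_{F(\opL)}\in L^1(G)$ and identify $T_\opL^*\Kern_{F(\opL)}=F$: when $F\in L^2(\Rnon,\Pmeas_\opL)$ this is immediate from $T_\opL^*T_\opL=\id$ (the isometry \eqref{eq:opLplancherel}), and a general bounded $F$ with $\Kern_{F(\opL)}\in L^1(G)$ is reached through the spectral truncations $F\,\chr_{[0,n]}$, passing to the limit and invoking that $\Kern_{G(\opL)}=0$ only when $G=0$ $\Pmeas_\opL$-a.e.

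The main obstacle, to my mind, is the $L^1(\Rnon,\Pmeas_\opL)\to L^\infty(G)$ bound underpinning existence: it hinges on recognizing $\Kern_{F(\opL)}$ as a positive-definite function for $F\ge 0$ and on computing its value at the identity as the total Plancherel mass of $F$. A secondary technical point is \ref{en:ktk_inv}, where closing the gap between square-integrable and merely bounded $F$ (whose kernel happens to be integrable) requires the truncation and limiting argument rather than a direct appeal to the isometry.
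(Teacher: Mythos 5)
Most of your proposal is correct and runs parallel to the paper's proof. Existence and uniqueness of $\chi_\opL$ via the boundedness $T_\opL : L^1(\Rnon,\Pmeas_\opL)\to C_0(G)$ together with the Dunford--Pettis theorem is exactly the paper's route; your positive-definiteness computation re-derives the $L^1\to L^\infty$ bound that the paper simply quotes. Your conjugation argument for \ref{en:ktk_real} is a valid substitute for the paper's appeal to nonnegativity of the heat kernel, the duality/Fubini argument for \ref{en:ktk_adj} is the same as the paper's, and your distributional-plus-hypoellipticity proof of \ref{en:ktk_smooth} is a legitimate alternative to the paper's, which instead derives $\chi_\opL(\lambda,\cdot)\ast\Kern_{F(\opL)} = F(\lambda)\,\chi_\opL(\lambda,\cdot)$ from the algebra morphism property and Lebesgue differentiation, and gets smoothness by taking $F(\lambda)=e^{-\lambda}$ and convolving with the Schwartz heat kernel. (Minor point in your version of \ref{en:ktk_smooth}: for the Fubini step you also need $\lambda F(\lambda)$ to be in $L^1\cap L^\infty(\Rnon,\Pmeas_\opL)$, so run the argument for compactly supported bounded $F$; this costs nothing.)

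The genuine gap is in the inversion formula \ref{en:ktk_inv}. Your extension from the case $F \in L^2(\Rnon,\Pmeas_\opL)$ to a general bounded $F$ with $\Kern_{F(\opL)}\in L^1(G)$ relies on the sharp truncations $F_n = F\,\chr_{[0,n]}$. But the case already established applies to $F_n$ only if $\Kern_{F_n(\opL)}\in L^1(G)$, and sharp spectral cutoffs essentially never have integrable kernels: already for $G=\RR$ and $\opL=-\partial_t^2$, the convolution kernel of $\chr_{[0,n]}(\opL)$ is $\sin(\sqrt{n}\,t)/(\pi t)\notin L^1(\RR)$. Moreover, even granting that, you would need $\Kern_{F_n(\opL)}\to\Kern_{F(\opL)}$ in $L^1(G)$ in order to pass to the limit inside $\int_G \cdot\;\chi_\opL(\lambda,x)\,d\meas(x)$, since $\chi_\opL(\lambda,\cdot)$ is merely bounded; no such convergence is available when $F\notin L^2(\Rnon,\Pmeas_\opL)$, as $\Kern_{F_n(\opL)}-\Kern_{F(\opL)}=\Kern_{(F\chr_{(n,\infty)})(\opL)}$ is then just a distribution. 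The paper's fix is to regularize with the heat semigroup instead of a sharp cutoff: set $F_t(\lambda)=F(\lambda)e^{-t\lambda}$, so that $\Kern_{F_t(\opL)}=\Kern_{F(\opL)}\ast\Kern_{\exp(-t\opL)}\in L^1\cap L^2(G)$ (the heat kernel being Schwartz), the $L^1\cap L^2$ case applies to $F_t$, and as $t\to 0^+$ one has $F_t\to F$ pointwise while $\Kern_{F_t(\opL)}\to\Kern_{F(\opL)}$ in $L^1(G)$ by the approximate identity property of the heat kernel; taking a sequence $t_j\to 0$ and intersecting the exceptional $\Pmeas_\opL$-null sets yields \eqref{eq:opLinversion}. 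Replacing your sharp cutoff by this smooth one repairs the argument; the rest of your proof stands.
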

\begin{proof}
From the boundedness of the operator  $T_\opL:L^1(\Rnon,\Pmeas_\opL)\to C_0(G)$, it follows that $T_\opL$ is an integral operator with (uniquely determined) kernel $\chi_\opL \in L^\infty(\Rnon \times G,\Pmeas_\opL \times \meas)$, whence \eqref{eq:opLtransform}.

Nonnegativity of the heat kernel $\Kern_{\exp(-t\opL)}$ shows that $\chi_\opL$ is real-valued, i.e., part \ref{en:ktk_real}. In particular, $\chi_\opL$ serves also as integral kernel of the adjoint operator $T_\opL^*$ (swapping the roles of the variables), whence part \ref{en:ktk_adj}.

From the Plancherel identity \eqref{eq:opLplancherel} we deduce that $T_\opL^* T_\opL$ is the identity operator on $L^2(\Rnon,\Pmeas_\opL)$, whence the inversion formula \eqref{eq:opLinversion} follows whenever $\Kern_{F(\opL)} \in L^1 \cap L^2(G)$. By using the heat kernel as approximate identity, the inversion formula extends to all $F \in L^\infty(\Rnon,\Pmeas_\opL)$ with $K_{F(\opL)} \in L^1(G)$.

Finally, from the algebra morphism property
$\Kern_{F(\opL)}\ast\Kern_{G(\opL)} = \Kern_{FG(\opL)}$
and Lebesgue differentiation
one gets that 
\[
\chi_\opL(\lambda,\cdot)\ast \Kern_{F(\opL)} = \Kern_{F(\opL)} \ast \chi_\opL(\lambda,\cdot) = F(\lambda) \, \chi_\opL(\lambda,\cdot) \qquad\text{for }\Pmeas_\opL\text{-a.e.\ }\lambda,
\]
from which part \ref{en:ktk_smooth} follows easily (take, e.g., $F(\lambda) = e^{-\lambda}$ and use the smoothness of the heat kernel).
\end{proof}

From the above inversion formula \eqref{eq:opLinversion} and dominated convergence it is clear that continuity of the kernel $\chi_\opL$ in the variable $\lambda$ (i.e., the fact that $\chi_\opL(\cdot,x)$ is continuous on $\Sigma_\opL$ for $\meas$-almost all $x \in G$) would imply, together with \eqref{eq:vanishinfty}, statement \eqref{st:rl} for any Lie group of polynomial growth: actually, it would imply the stronger statement that $T_\opL^*$ maps $L^1(G)$ into $C(\Rnon)$. However, differently from continuity of $\chi_\opL$ in the variable $x$ (which always holds by Proposition \ref{prp:kerneltransformkernel}\ref{en:ktk_smooth}), continuity in the variable $\lambda$ is a much subtler property, as we shall see.

One case where continuity of $\chi_\opL$ in the variable $\lambda$ can be proved is that of homogeneous sub-Laplacians on stratified groups (see, e.g., \cite{folland_hardy_1982} for basic definitions and results), thus proving Theorem \ref{thm:riemannlebesgue}\ref{en:riemannlebesguehomogeneous}. We remark that the characterisation of the Plancherel measure for homogeneous sub-Laplacians given below is well-known \cite{christ_multipliers_1991,de_michele_multipliers_1987,hulanicki_almost_1983}.

\begin{prp}
Assume that $G$ is a stratified group, with homogeneous dimension $Q$ and automorphic dilations $\delta_t$, and $\opL$ is a homogeneous sub-Laplacian thereon. Then
\begin{equation}\label{eq:Pmeas_hom}
d\Pmeas_\opL(\lambda) = c \lambda^{Q/2-1} d\lambda
\end{equation}
for some $c \in \Rpos$, and
\begin{equation}\label{eq:chi_hom}
\chi_\opL(t^{2} \lambda,x) = \chi_\opL(\lambda,\delta_{t}(x))
\end{equation}
for almost all $\lambda \in \Rnon$, $x \in G$ and $t \in \Rpos$. In particular $\Sigma_\opL = \Rnon$,  $\chi_\opL \in C(\Rnon \times G)$, and statement \eqref{st:rl} holds in this case.
\end{prp}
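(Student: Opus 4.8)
The plan is to exploit the dilation structure throughout. Write $D_t$ for the operator $D_t f = f\circ\delta_t$; since $\delta_t$ scales the Haar measure by $t^Q$ and the homogeneous sub-Laplacian satisfies $\opL(f\circ\delta_t)=t^2(\opL f)\circ\delta_t$, the rescaled operator $\tilde D_t = t^{Q/2}D_t$ is unitary on $L^2(G)$ and conjugates $\opL$ to $t^2\opL$, i.e.\ $\tilde D_t^{-1}\opL\tilde D_t = t^2\opL$. Since the functional calculus commutes with unitary conjugation, $\tilde D_t^{-1}F(\opL)\tilde D_t = F(t^2\opL)$ for every bounded Borel $F$; translating this into convolution kernels, by a short computation using that $\delta_t$ is an automorphism, yields the fundamental dilation identity
\[
\Kern_{F_t(\opL)}(x) = t^{-Q}\,\Kern_{F(\opL)}(\delta_{1/t}x), \qquad F_t(\lambda):=F(t^2\lambda),
\]
which I would cross-check against the known scaling $\Kern_{\exp(-t^2\opL)}(x)=t^{-Q}\Kern_{\exp(-\opL)}(\delta_{1/t}x)$ of the heat kernel.

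First I would derive \eqref{eq:Pmeas_hom}. Inserting $F_t$ into the Plancherel identity \eqref{eq:opLplancherel} and changing variables via $\delta_{1/t}$ on $G$ gives $\int_\Rnon |F(t^2\lambda)|^2\,d\Pmeas_\opL(\lambda) = t^{-Q}\int_\Rnon|F(\lambda)|^2\,d\Pmeas_\opL(\lambda)$ for all $F$ in a dense class, that is, the pushforward of $\Pmeas_\opL$ under $\lambda\mapsto s\lambda$ equals $s^{-Q/2}\Pmeas_\opL$ for every $s=t^2>0$. This relative invariance under the scaling action of $\Rpos$ forces $\Pmeas_\opL$ to have no atom at $0$ and to be a positive multiple of the relatively invariant measure $\lambda^{Q/2-1}\,d\lambda$ on $\Rpos$ (the standard argument under $\lambda=e^u$, turning scaling into translation), yielding \eqref{eq:Pmeas_hom} with $c>0$ since $\Pmeas_\opL\neq0$; in particular $\Sigma_\opL=\supp\Pmeas_\opL=\Rnon$.

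Next I would obtain \eqref{eq:chi_hom}. Expanding both sides of the dilation identity by means of the defining relation \eqref{eq:opLtransform} and substituting the explicit form of $\Pmeas_\opL$, the factors $t^{-Q}$ and $\lambda^{Q/2-1}$ cancel after the change of variable $\mu=t^2\lambda$, leaving $\int_\Rnon F(\mu)\,\chi_\opL(\mu/t^2,x)\,d\Pmeas_\opL(\mu) = \int_\Rnon F(\mu)\,\chi_\opL(\mu,\delta_{1/t}x)\,d\Pmeas_\opL(\mu)$ for all test $F$. Running over a countable dense family of $F$ and applying Fubini, I get $\chi_\opL(\mu/t^2,x)=\chi_\opL(\mu,\delta_{1/t}x)$ for $(\Pmeas_\opL\times\meas)$-a.e.\ $(\mu,x)$ and each fixed $t$; renaming $\mu=t^2\lambda$ and $x\mapsto\delta_t x$ gives exactly \eqref{eq:chi_hom}, and integrating in $t$ then produces the joint almost-everywhere statement.

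Finally, for continuity and \eqref{st:rl}, I would fix one $\lambda_0>0$ with $\chi_\opL(\lambda_0,\cdot)\in C^\infty(G)$ — almost every $\lambda_0$ works by Proposition \ref{prp:kerneltransformkernel}\ref{en:ktk_smooth} — and set $\tilde\chi(\lambda,x):=\chi_\opL(\lambda_0,\delta_{\sqrt{\lambda/\lambda_0}}x)$, which is continuous on $\Rpos\times G$ as a composition of continuous maps and extends continuously to $\lambda=0$ since $\delta_t x\to e$ as $t\to0$, so $\tilde\chi\in C(\Rnon\times G)$; the scaling relation \eqref{eq:chi_hom} with $\lambda=\lambda_0$ shows $\tilde\chi=\chi_\opL$ almost everywhere, so $\chi_\opL$ admits this continuous representative. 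Boundedness and continuity of $\chi_\opL$ then make $\lambda\mapsto\int_G k(x)\,\chi_\opL(\lambda,x)\,d\meas(x)=T_\opL^*k(\lambda)$ continuous for every $k\in L^1(G)$ by dominated convergence, so $T_\opL^*$ maps $L^1(G)$ into $C(\Rnon)$; combined with the inversion formula \eqref{eq:opLinversion} and the general vanishing at infinity \eqref{eq:vanishinfty}, this gives \eqref{st:rl}. I expect the main obstacle to be precisely this last step: promoting the almost-everywhere scaling identity to a genuine continuous representative valid up to the boundary $\lambda=0$, and arranging the exceptional null sets (from a.e.\ smoothness in $x$ and from the a.e.\ identity in $t$) coherently so that the explicit candidate $\tilde\chi$ really coincides with $\chi_\opL$.
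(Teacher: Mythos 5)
Your proof is correct and follows essentially the same route as the paper's: both rest on the homogeneity identity $\Kern_{F(t^2\opL)} = t^{-Q}\,\Kern_{F(\opL)}\circ\delta_{t^{-1}}$, deduce \eqref{eq:Pmeas_hom} and \eqref{eq:chi_hom} from it via the Plancherel identity and the defining relation \eqref{eq:opLtransform}, and then upgrade smoothness in $x$ (Proposition \ref{prp:kerneltransformkernel}\ref{en:ktk_smooth}) to joint continuity of $\chi_\opL$, which yields \eqref{st:rl} by the discussion following that proposition. The paper states these steps as ``immediate''; your write-up simply supplies the details (unitary conjugation for the dilation identity, the relative-invariance argument for $\Pmeas_\opL$, and the explicit continuous representative $\tilde\chi$), all of which are sound.
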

\begin{proof}
Formulas \eqref{eq:Pmeas_hom} and \eqref{eq:chi_hom} are immediate consequences of the relation
\[
\Kern_{F(t^2 \opL)} = t^{-Q} \Kern_{F(\opL)} \circ \delta_{t^{-1}},
\]
due to homogeneity. From \eqref{eq:Pmeas_hom} it is clear that the support of $\Pmeas_\opL$ is the whole $\Rnon$, while \eqref{eq:chi_hom} allows one to deduce joint continuity of $\chi_\opL$ from its continuity in the variable $x$, due to Proposition \ref{prp:kerneltransformkernel}\ref{en:ktk_smooth}. By the above discussion, continuity of $\chi_\opL$ implies statement \eqref{st:rl}.
\end{proof}

However, continuity of $\chi_\opL$ in the variable $\lambda$ is not true in general, nor is it necessary for the validity of statement \eqref{st:rl}, as the following result for \emph{abelian} groups (proving Theorem \ref{thm:riemannlebesgue}\ref{en:riemannlebesgueabelian}) shows.

\begin{prp}\label{prp:abelianRL}
Let $G$ be an abelian connected Lie group and $\opL$ a sub-Laplacian thereon. Then statement \eqref{st:rl} holds. However, if $G$ is isomorphic to $\RR \times \TT$, 
then 
the kernel $\chi_\opL$ is not continuous in the variable $\lambda$.
\end{prp}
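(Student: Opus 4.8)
The plan is to transfer everything to the Pontryagin dual, where $\opL$ becomes a multiplication operator, and then to reduce statement \eqref{st:rl} to a regularity statement for a function composed with the symbol of $\opL$.

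Since $G$ is abelian and connected, $G \cong \RR^n \times \TT^m$, so that $\widehat G \cong \RR^n \times \ZZ^m$ and the Plancherel measure on $\widehat G$ is $\nu = (\text{Lebesgue on }\RR^n) \times (\text{counting on }\ZZ^m)$. As the $X_j$ commute, H\"ormander's condition means that they span $\lie g$, so under the Fourier transform $\Four$ the operator $\opL = -\sum_j X_j^2$ becomes multiplication by a positive definite quadratic form $Q$ on $\RR^{n+m} \cong \widehat{\lie g}$, restricted to $\widehat G$. Consequently $\Pmeas_\opL = Q_* \nu$ and, for every bounded Borel $F$, the operator $F(\opL)$ is multiplication by $F \circ Q$, whence $\Four \Kern_{F(\opL)} = F \circ Q$ $\nu$-almost everywhere. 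When $\Kern_{F(\opL)} \in L^1(G)$, the classical Riemann--Lebesgue lemma on the abelian group $G$ gives $\Phi \defeq \Four \Kern_{F(\opL)} \in C_0(\widehat G)$, so that $\Phi \in C_0(\widehat G)$ and $\Phi = F \circ Q$ $\nu$-a.e. The goal becomes the construction of some $\tilde F \in C_0(\Rnon)$ with $\tilde F \circ Q = \Phi$ $\nu$-a.e., which is equivalent to $F = \tilde F$ $\Pmeas_\opL$-a.e.

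The heart of the matter is to factor the continuous function $\Phi$ through the symbol $Q$ modulo $\nu$-null sets, and here the main obstacle appears: one cannot simply argue that $\Phi$ is constant on the fibres of $Q$, since each fibre is itself $\nu$-null and the identity $\Phi = F \circ Q$ carries no information on it. I would circumvent this slice by slice. On the slice $\RR^n \times \{k\}$ one completes the square, $Q(\xi,k) = \tilde Q_0(\xi - v_k) + c_k$, where $\tilde Q_0$ is the (positive definite) restriction of $Q$ to the $\RR^n$-directions and $c_0 = 0 = \min Q$; passing to $\tilde Q_0$-polar coordinates and using Fubini, for almost every level $s$ the ellipsoid $\{\tilde Q_0(\cdot - v_k) = s\}$ (connected when $n \geq 2$) carries $\Phi(\cdot,k) = F(s + c_k)$ almost everywhere, hence everywhere by continuity of $\Phi$. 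This yields a continuous $\tilde F_k$ on $[c_k,\infty)$ agreeing with $F$ $\Pmeas_\opL$-a.e. Because $\Pmeas_\opL$ has strictly positive density on $(c_k,\infty)$ (the push-forward density being a multiple of $(\lambda - c_k)^{n/2 - 1}$), any two $\tilde F_k$ coincide on the overlap of their domains by continuity, and since $c_0 = 0$ they all agree with $\tilde F_0$ on $\Rnon$; one then checks that $\tilde F \defeq \tilde F_0 \in C_0(\Rnon)$ (using $\Phi \in C_0$ and properness of $Q$) and $\tilde F = F$ $\Pmeas_\opL$-a.e. When $n = 0$, i.e.\ $G = \TT^m$, the measure $\nu$ is counting, $\supp \Pmeas_\opL = Q(\widehat G)$ is a discrete subset of $\Rnon$ (as $Q$ is proper), and $\Phi \in C_0(\widehat G)$ forces $F$ to vanish at infinity along this support, whence $F$ trivially coincides $\Pmeas_\opL$-a.e. with an element of $C_0(\Rnon)$.

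For the second assertion I would take $G = \RR \times \TT$ with $\opL = -\partial_t^2 - \partial_\theta^2$ (the general sub-Laplacian reduces to this after rescaling and an automorphism). Fourier inversion gives
\begin{equation*}
\Kern_{F(\opL)}(t,\theta) = c \sum_{k \in \ZZ} e^{ik\theta} \int_{k^2}^\infty F(\lambda) \, \frac{\cos(\sqrt{\lambda - k^2}\, t)}{\sqrt{\lambda - k^2}} \,d\lambda,
\end{equation*}
so that $\Pmeas_\opL$ has density $\rho(\lambda) \sim \sum_{k^2 < \lambda} (\lambda - k^2)^{-1/2}$ and, comparing with \eqref{eq:opLtransform},
\begin{equation*}
\chi_\opL(\lambda,(t,\theta)) = \rho(\lambda)^{-1} \Biggl( \frac{\cos(\sqrt{\lambda}\,t)}{\sqrt\lambda} + \sum_{1 \le k < \sqrt\lambda} \frac{2 \cos(k\theta) \cos(\sqrt{\lambda - k^2}\,t)}{\sqrt{\lambda - k^2}} \Biggr)
\end{equation*}
up to a constant factor. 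Fix $k_0 \geq 1$. As $\lambda \downarrow k_0^2$ the terms with $|k| = k_0$ blow up like $(\lambda - k_0^2)^{-1/2}$ and dominate both numerator and denominator, so that $\chi_\opL(\lambda,(t,\theta)) \to \cos(k_0\theta)$, \emph{independently of $t$}; as $\lambda \uparrow k_0^2$ only the bounded terms with $|k| < k_0$ survive and the limit genuinely depends on $t$. Since $\rho$ is strictly positive on both sides of $\lambda = k_0^2$, these two distinct one-sided limits, occurring on a set of $(t,\theta)$ of positive measure (e.g.\ $\theta = 0$ and $\cos t \neq 1$), preclude any $\Pmeas_\opL$-continuous representative of $\chi_\opL(\cdot,(t,\theta))$; hence $\chi_\opL$ is not continuous in the variable $\lambda$. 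The step I expect to be most delicate is the slice-by-slice factorisation of the second paragraph: upgrading the almost-everywhere identity $\Phi = F \circ Q$ to a genuine continuous factorisation through $Q$, despite every fibre of $Q$ being null, and gluing the resulting pieces $\tilde F_k$ into a single $C_0$ function.
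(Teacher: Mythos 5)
Your proposal is correct, and its second half (the explicit kernel on $\RR\times\TT$ and the distinct one-sided limits at $\lambda=k_0^2$) is essentially identical to the paper's own argument. The Riemann--Lebesgue half, however, takes a genuinely different route at precisely the point you single out as delicate. The paper never factors $\Phi=\Four\Kern_{F(\opL)}$ through the fibres of the symbol: after reducing to the normal form $M(\xi,k)=|\xi|^2+|Ak|^2$, it restricts $\Phi$ to the single ray $\lambda\mapsto(\sqrt\lambda\,\omega,0)$ in the slice $k=0$, where $\omega\in S^{n-1}$ is chosen by Fubini so that the identity $F(\lambda)=\Phi(\sqrt\lambda\,\omega,0)$ holds for Lebesgue-a.e.\ $\lambda$; since $\Phi$ is continuous (indeed $C_0$) by the Euclidean Riemann--Lebesgue lemma, this ray restriction \emph{is} the desired representative, and absolute continuity of $\Pmeas_\opL$ with respect to Lebesgue measure finishes the proof. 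In other words, the fibre-constancy problem that you solve slice by slice --- a.e.\ levels, connectedness of ellipsoids, upgrading from a.e.\ levels to all levels, gluing the $\tilde F_k$ across slices --- is sidestepped entirely, because the candidate representative only needs to agree with $F$ almost everywhere rather than be constructed fibrewise. Your argument costs more but also buys more: it works directly with an arbitrary positive-definite symbol $Q$ (no normal-form reduction via automorphisms) and establishes the stronger structural fact that the $L^1$ Fourier transform genuinely factors through $Q$; the paper's trick is shorter but yields only the representative. The two treatments of the case $n=0$ coincide in substance. One small slip at the end of your counterexample: the set $\{\theta=0,\ \cos t\neq1\}$ is $\meas$-null (it is a line in $\RR\times\TT$), so by itself it cannot witness the failure of continuity for a positive-measure set of points; but since the two one-sided limits are continuous and distinct functions of $(t,\theta)$, they differ on a nonempty open set, which repairs the claim immediately --- the paper leaves this same bookkeeping implicit, noting only that the two limits are orthogonal in $L^2(\TT)$.
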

\begin{proof}
Since $G$ is an abelian connected Lie group, up to an isomorphism we may assume that $G = \RR^n \times \TT^m$. Moreover, up to an automorphism and rescaling, we may also assume that $\opL = -\nabla_x \cdot \nabla_x - (A \nabla_y) \cdot (A \nabla_y)$ on $\RR^n_x \times \TT^m_y$, where $A$ is an invertible real $m \times m$ matrix.

Let $\Four$ be the Fourier transform on $G$, given by 
\[
\Four(\phi)(\xi,k) 
 = \int_{\TT^m} \int_{\RR^n} \phi(x,y) \, e^{-i(\xi \cdot x +k \cdot y)} \,dx\,dy
\]
for all $(\xi,k) \in \RR^n \times \ZZ^m$, and note that $\Four( \opL \phi) = M \Four \phi$, where $M(\xi,k) = |\xi|^2+|Ak|^2$.

In the case $n=0$, this shows that the spectrum $\Sigma_\opL = \{ |Ak|^2 \tc k \in \ZZ^m \}$ of $\opL$ is discrete in $\Rnon$, so any function on $\Sigma_\opL$ vanishing at infinity extends to an element of $C_0(\Rnon)$ and, in view of \eqref{eq:vanishinfty}, statement \eqref{st:rl} is trivially true in this case.

Suppose instead that $n > 0$. Then we can write
\begin{equation}\label{eq:opLtransformRxT2}
\begin{split}
\Kern_{F(\opL)}(x,y) &= \Four^{-1}(F \circ M)(x,y) \\
&= \frac{1}{(2\pi)^{n+m}} \int_{\RR^n} \sum_{k\in\ZZ^m} F(|\xi|^2+|Ak|^2) \, e^{i(\xi \cdot x +k \cdot y)} \,d\xi.
\end{split}
\end{equation}
This, together with the Plancherel theorem for $\Four$, implies that
\begin{equation}\label{eq:RTPmeas2}
d\Pmeas_\opL(\lambda) = \kappa_{n,m} \sum_{\substack{k \in \ZZ^m \\ |Ak|<\sqrt\lambda}} (\lambda-|Ak|^2)^{n/2-1}\, d\lambda,
\end{equation}
for some $\kappa_{n,m} \in \Rpos$.

Let now $F : \Rnon \to \CC$ be a bounded Borel function such that $\Kern_{F(\opL)} \in L^1(G)$. From \eqref{eq:opLtransformRxT2} we deduce that
\[
F(|\xi|^2 + |Ak|^2) = \Four(\Kern_{F(\opL)})(\xi,k)
\]
for almost all $(\xi,k) \in \RR^n \times \ZZ^m$.  Therefore, for almost all $\omega \in S^{n-1}$, the identity
\begin{equation}\label{eq:RTrl2}
F(\lambda) = \Four(\Kern_{F(\opL)})(\sqrt\lambda \omega,0)
\end{equation}
holds for Lebesgue-almost all $\lambda \in \Rpos$. Let us fix such $\omega \in S^{n-1}$; then the right-hand side of \eqref{eq:RTrl2} is a continuous function of $\lambda$, because $\Kern_{F(\opL)} \in L^1(G)$.
Since, by \eqref{eq:RTPmeas2}, $\Pmeas_\opL$ is absolutely continuous with respect to the Lebesgue measure, the identity \eqref{eq:RTrl2} also holds for $\Pmeas_\opL$-almost all $\lambda \in \Rpos$, whence $F$ coincides $\Pmeas_\opL$-almost everywhere with a continuous function on $\Rnon$. This proves statement \eqref{st:rl}.

Assume now that $m=n=1$. Up to an automorphism and rescaling, we may also assume that $A = 1$. From \eqref{eq:opLtransformRxT2} and \eqref{eq:RTPmeas2} we deduce that
\[
\chi_\opL(\lambda,(x,y)) = \frac{\sum_{|k|<\sqrt\lambda} \cos((\lambda-k^2)^{1/2} x) \, e^{iky} \, (\lambda-k^2)^{-1/2}}
{\sum_{ |k|<\sqrt\lambda} (\lambda-k^2)^{-1/2}}.
\]
It is easy to check that, for every $h\in\NN\setminus\{0\}$,
\[
\lim_{\lambda \uparrow {h^2}} \chi_\opL(\lambda,(x,y)) = \frac{\sum_{|k|<h} \cos((h^2-k^2)^{1/2} x) \, e^{iky} \, (h^2-k^2)^{-1/2}}
{\sum_{|k|<h} (h^2-k^2)^{-1/2}} = \sum_{|k|<h} c_k(x) \, e^{iky},
\]
while
\[
\lim_{\lambda \downarrow {h^2}} \chi_\opL(\lambda,(x,y)) = e^{ihy}+e^{-ihy} = 2\cos(hy),
\]
therefore the two one-sided limits are different as functions of $(x,y)$: they are orthogonal in $L^2(\TT)$ as functions of $y$.
\end{proof}

\section{Type I groups and group Plancherel formula}\label{s:groupplancherel}

A fundamental ingredient in the proofs of Sections \ref{s:quotients} and \ref{s:se2} is Proposition \ref{prp:rep_fc_cont}, which allows us to relate the functional calculus of a sub-Laplacian $\opL$ and that of the corresponding operator in any unitary representation, provided the function $F$ defining the operators is continuous.

The continuity hypothesis on $F$ in Proposition \ref{prp:rep_fc_cont} cannot in general be removed. However one may wonder whether, for a given bounded Borel function $F : \RR \to \CC$, one might obtain the validity of \eqref{eq:rep_fc_cont} after replacing $F$ with another representative in the equivalence class modulo $\Pmeas_\opL$. The following result shows that, in that case, the new representative must be continuous on the spectrum of $\opL$.

\begin{prp}\label{prp:almost_continuous}
Let $G$ be a connected Lie group of polynomial growth and $\opL$ be a sub-Laplacian thereon.
If $F : \Rnon \to \CC$ is a bounded Borel function such that $\Kern_{F(\opL)} \in L^1(G)$ and \eqref{eq:rep_fc_cont} holds for all $\pi \in \widehat G$, then $F$ is continuous on $\Sigma_\opL$, and there exists a continuous function $\tilde F : \Rnon \to \CC$ such that $F = \tilde F$ $\Pmeas_\opL$-almost everywhere.
\end{prp}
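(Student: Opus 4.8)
The plan is to exploit the group Plancherel formula available for type I groups, but here $G$ is only assumed to be of polynomial growth, hence amenable but not necessarily type I. So the first step is to identify what structure we actually have. We are given that $\pi(\Kern_{F(\opL)}) = F(d\pi(\opL))$ holds for \emph{every} $\pi \in \widehat G$, not just for the continuous representative. The key idea is to compare this with the conclusion of Proposition \ref{prp:rep_fc_cont}: if $\tilde F$ is any continuous function agreeing with $F$ $\Pmeas_\opL$-a.e.\ with $\Kern_{\tilde F(\opL)} \in L^1(G)$, then automatically $\pi(\Kern_{\tilde F(\opL)}) = \tilde F(d\pi(\opL))$ for all $\pi$. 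Since $\Kern_{F(\opL)} = \Kern_{\tilde F(\opL)}$ (because $F = \tilde F$ $\Pmeas_\opL$-a.e.), the two operators $F(d\pi(\opL))$ and $\tilde F(d\pi(\opL))$ coincide for all $\pi$; the substance of the proposition is to run this comparison \emph{in reverse}, producing the continuous $\tilde F$ out of the hypothesis rather than assuming it.

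First I would show that $F$ is continuous on $\Sigma_\opL$. Fix $\lambda_0 \in \Sigma_\opL$. The hypothesis \eqref{eq:rep_fc_cont} says $F(d\pi(\opL)) = \pi(\Kern_{F(\opL)})$, and the right-hand side depends on $\pi$ \emph{continuously} in a suitable sense because $\Kern_{F(\opL)} \in L^1(G)$ (the map $\pi \mapsto \pi(k)$ for $k \in L^1(G)$ is continuous for the appropriate topology on $\widehat G$, e.g.\ by strong continuity of $\pi \mapsto \pi(x)$ and dominated convergence). For each $\pi$, the spectrum of $d\pi(\opL)$ is contained in $\Sigma_\opL$, and as $\pi$ ranges over $\widehat G$ the spectra $\Sigma_{d\pi(\opL)}$ cover $\Sigma_\opL$ (this is standard for amenable $G$: the $L^2$-spectrum of $\opL$ is the closure of $\bigcup_\pi \Sigma_{d\pi(\opL)}$). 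The plan is to pick, for a value $\lambda_0 \in \Sigma_\opL$, representations $\pi$ whose operator $d\pi(\opL)$ has spectrum accumulating at or containing $\lambda_0$, and read off the value $F(\lambda_0)$ from the operator $\pi(\Kern_{F(\opL)})$ via spectral projections or approximate eigenvectors; continuity of $\pi \mapsto \pi(\Kern_{F(\opL)})$ then forces continuity of $\lambda \mapsto F(\lambda)$ along $\Sigma_\opL$. Concretely, if $(\pi_n)$ is a net with $\lambda_n \in \Sigma_{d\pi_n(\opL)}$ and $\lambda_n \to \lambda_0$, one extracts unit approximate eigenvectors $v_n$ with $d\pi_n(\opL) v_n \approx \lambda_n v_n$, so that $\langle \pi_n(\Kern_{F(\opL)}) v_n, v_n\rangle \approx F(\lambda_n)$, and the $L^1$-continuity of the left side pins down the limiting behaviour of $F(\lambda_n)$, giving continuity of $F|_{\Sigma_\opL}$.

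Once $F$ is continuous on the closed set $\Sigma_\opL \subseteq \Rnon$, the final step is a routine extension: by the Tietze extension theorem one produces a continuous $\tilde F : \Rnon \to \CC$ with $\tilde F|_{\Sigma_\opL} = F|_{\Sigma_\opL}$. Since $\Kern_{F(\opL)}$ depends only on the restriction $F|_{\Sigma_\opL}$ (as recalled in the introduction, $\supp \Pmeas_\opL = \Sigma_\opL$ and $\Kern_{F(\opL)}$ is unchanged under modification of $F$ off $\Sigma_\opL$), we have $\tilde F = F$ $\Pmeas_\opL$-almost everywhere, as required. I expect the main obstacle to be the first step: rigorously extracting pointwise values of $F$ on $\Sigma_\opL$ from the operator family $\{\pi(\Kern_{F(\opL)})\}_\pi$ and controlling the relevant topology on $\widehat G$ without a type I assumption. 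The delicate point is that spectral projections of $d\pi(\opL)$ need not vary continuously with $\pi$, so one must work with approximate eigenvectors and the functional calculus identity $\pi(\Kern_{F(\opL)}) = F(d\pi(\opL))$ carefully, rather than hoping for a clean Plancherel-type disintegration.
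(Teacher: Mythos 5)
Your overall skeleton --- evaluate the identity $\pi(\Kern_{F(\opL)}) = F(d\pi(\opL))$ on eigenvectors to read off the values of $F$ on $\Sigma_\opL$, use integrability of $\Kern_{F(\opL)}$ to get continuity, then finish with a Tietze extension --- is indeed the paper's, and your final paragraph coincides with the paper's closing step. But the central step of your second paragraph has a genuine gap: for a bounded \emph{Borel} $F$, approximate eigenvectors do not yield $\langle F(d\pi_n(\opL)) v_n, v_n \rangle \approx F(\lambda_n)$. If $\mu_{v_n}$ denotes the spectral measure of $v_n$ with respect to $d\pi_n(\opL)$, then $\langle F(d\pi_n(\opL)) v_n, v_n \rangle = \int F \, d\mu_{v_n}$, and the concentration of $\mu_{v_n}$ near $\lambda_n$ controls this integral only when $F$ is already known to be continuous near $\lambda_n$ --- precisely what is to be proved. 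For instance, if $F = \chr_{\{\lambda_0\}}$ and the measures $\mu_{v_n}$ assign no mass to $\{\lambda_0\}$, then every pairing equals $0$ while $F(\lambda_0) = 1$: no approximate-eigenvector data can detect the value of a Borel function at a single point. The argument is therefore circular, and no amount of care with the topology of $\widehat G$ (which, for non-type-I groups, is a further difficulty you correctly flag but do not resolve) repairs it.

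What the paper does instead is to use \emph{exact} eigenvectors, which exist in abundance precisely because $G$ is amenable: by \cite[Section 4]{martini_spectral_2011}, every $\lambda \in \Sigma_\opL$ equals $\Val_\opL(\phi)$ for some eigenfunction $\phi$ of $\opL$ of positive type that is a diagonal coefficient of an irreducible representation $\pi$, i.e.\ $\phi = \langle \pi(\cdot) v_\phi, v_\phi\rangle$ with $d\pi(\opL) v_\phi = \lambda v_\phi$ exactly. For an exact eigenvector the spectral measure is $\delta_\lambda$, so $F(d\pi(\opL)) v_\phi = F(\lambda) v_\phi$ for \emph{any} bounded Borel $F$, and the hypothesis gives the pointwise identity $F(\lambda) = \langle \Kern_{F(\opL)}, \phi\rangle$. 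Continuity then comes from $\Kern_{F(\opL)} \in L^1(G)$, but on the right space: not on $\widehat G$, but on the set $\Eigen_\opL$ of positive-type eigenfunctions with the weak-$*$ topology of $L^\infty(G)$, against which pairing with an $L^1$ function is automatically continuous; since $\Val_\opL : \Eigen_\opL \to \Sigma_\opL$ is a continuous, proper, closed surjection, $\Sigma_\opL$ carries the quotient topology and $F|_{\Sigma_\opL}$ is continuous. (One more wrinkle: the identity is first obtained only on diagonal coefficients of irreducible representations, and is propagated to all of $\Eigen_\opL$ because those are the extreme points of the weak-$*$ compact convex fibers $\Val_\opL^{-1}(\{\lambda\})$, on which the affine continuous functional $\phi \mapsto \langle \Kern_{F(\opL)}, \phi\rangle$ must then be constant.) If you replace your approximate-eigenvector step with this exact-eigenvector machinery, the rest of your outline goes through.
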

\begin{proof}
As in \cite[Section 4]{martini_spectral_2011}, let $\Eigen_\opL$ be the set of eigenfunctions $\phi$ of $\opL$ of positive type with $\phi(e) = 1$, and let $\Val_\opL : \Eigen_\opL \to \Rnon$ be the map that associates to each eigenfunction the corresponding eigenvalue. By \cite[Proposition 4.5 and Corollary 4.10]{martini_spectral_2011}, if $\Eigen_\opL$ is given the subspace topology induced by the weak-$*$ topology of $L^\infty(G)$ (equivalently, the compact-open topology of $C(G)$ or the Fr\'echet topology of $C^\infty(G)$), $\Val_\opL$ is a continuous, proper and closed map and $\Val_\opL(\Eigen_\opL) = \Sigma_\opL$.

Let $\widehat G$ be the collection of all irreducible unitary representations of $G$ (up to equivalence). For all $\pi \in \widehat G$, let $\Eigen_\opL^\pi$ be the set of all the $\phi \in \Eigen_\opL$ which are diagonal coefficients of $\pi$. In other words, for all $\phi \in \Eigen_\opL^\pi$, there exists a smooth unit vector $v_\phi \in \Hilb^\pi$ such that $\phi = \langle \pi(\cdot) v_\phi,v_\phi \rangle$ and $d\pi(\opL) v_\phi = \Val_\opL(\phi) v_\phi$ \cite[Proposition 4.3]{martini_spectral_2011}. Note moreover that, if $\Eigen_{\opL}^\irr = \bigcup_{\pi \in \widehat G} \Eigen_\opL^\pi$, then, for all $\lambda \in \Rnon$, $\Eigen_{\opL}^\irr \cap \Val_\opL^{-1}(\{\lambda\})$ is the set of extreme points of the convex set $\Val_\opL^{-1}(\{\lambda\}) \subseteq \Eigen_\opL$, which is weakly-$*$ compact in $L^\infty(G)$ \cite[Proposition 4.6]{martini_spectral_2011}; in particular, $\Val_\opL(\Eigen_\opL^\irr) = \Val_\opL(\Eigen_\opL) = \Sigma_\opL$.

Let $F : \Rnon \to \CC$ be a bounded Borel function such that $\Kern_{F(\opL)} \in L^1(G)$ and \eqref{eq:rep_fc_cont} holds for all $\pi \in \widehat G$. 
Then, for all $\pi \in \widehat G$ and $\phi \in \Eigen_\opL^\pi$,
\begin{equation}\label{eq:positivetype_identity}
\langle \Kern_{\opL}, \phi \rangle = \langle \pi(\Kern_{\opL}) v^\phi, v^\phi \rangle = \langle F(d\pi(\opL)) v^\phi, v^\phi \rangle = F(\Val_\opL(\phi)).
\end{equation}
In particular, for all $\lambda \in \Rnon$, the function $\phi \mapsto \langle \Kern_{\opL}, \phi \rangle$ is constant on $\Eigen_\opL^\irr \cap \Val_\opL^{-1}(\{\lambda\})$, hence it is constant on its closed convex hull $\Val_\opL^{-1}(\{\lambda\})$. This shows that the identity
\[
\langle \Kern_{\opL}, \phi \rangle = F(\Val_{\opL}(\phi))
\]
holds for all $\phi \in \Eigen_\opL$. Since $\Kern_{\opL} \in L^1(G)$, the above identity shows that the function $F \circ \Val_{\opL}$ is continuous on $\Eigen_{\opL}$. On the other hand, $\Val_\opL : \Eigen_\opL \to \Rnon$ is a continuous closed map, so the topology of the spectrum $\Sigma_\opL = \Val_\opL(\Eigen_\opL)$ induced by $\Rnon$ is the same as the quotient topology induced by $\Val_\opL$, and therefore $F$ is continuous on $\Sigma_\opL$.

Finally, since $\Sigma_\opL$ is closed in $\Rnon$, the bounded continuous function $F|_{\Sigma_\opL}$ can be extended to a bounded continuous function $\tilde F : \Rnon \to \CC$ and clearly  $F = \tilde F$ $\Pmeas_\opL$-almost everywhere.
\end{proof}

\begin{rem}
The polynomial growth assumption in Proposition \ref{prp:almost_continuous} could be relaxed (the same proof works by just assuming $G$ amenable). In addition, the above result, as many others in this section, can be straightforwardly extended from the case of a sub-Laplacian $\opL$ to the case of a ``weighted subcoercive system'' of commuting differential operators on $G$, as in \cite{martini_spectral_2011}.
\end{rem}

The previous result allows us to obtain an equivalent formulation of statement \eqref{st:rl} i.e., the Riemann--Lebesgue lemma for a sub-Laplacian $\opL$.

\begin{cor}\label{cor:equivalent_rl}
Let $G$ be a connected Lie group of polynomial growth and $\opL$ be a sub-Laplacian thereon. The following are equivalent:
\begin{enumerate}[label=\textup{(\roman*)}]
\item\label{en:rl} Statement \eqref{st:rl} holds for $G$ and $\opL$.
\item\label{en:change_rep} 
Every bounded Borel function $F : \Rnon \to \CC$ with $\Kern_{F(\opL)} \in L^1(G)$ coincides $\Pmeas_\opL$-almost everywhere with another bounded Borel function $\tilde F : \Rnon \to \CC$ such that
\begin{equation}\label{eq:rep_fc_mod}
\pi(K_{F(\opL)}) = \tilde F(d\pi(\opL))
\end{equation}
for all $\pi \in \widehat G$.
\end{enumerate}
\end{cor}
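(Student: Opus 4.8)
The plan is to establish the two implications separately, using Proposition \ref{prp:almost_continuous} as the bridge in the direction \ref{en:change_rep}$\Rightarrow$\ref{en:rl}, and Proposition \ref{prp:rep_fc_cont} in the direction \ref{en:rl}$\Rightarrow$\ref{en:change_rep}. The key observation is that statement \eqref{st:rl} asserts the existence of a \emph{continuous} representative modulo $\Pmeas_\opL$, while \ref{en:change_rep} asserts the existence of a representative satisfying the functional-calculus identity \eqref{eq:rep_fc_mod} in every irreducible representation; Proposition \ref{prp:almost_continuous} tells us precisely that any such representative is automatically continuous on $\Sigma_\opL$.

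For the implication \ref{en:change_rep}$\Rightarrow$\ref{en:rl}, I would argue as follows. Suppose $F : \Rnon \to \CC$ is a bounded Borel function with $\Kern_{F(\opL)} \in L^1(G)$. By \ref{en:change_rep}, there is a bounded Borel $\tilde F = F$ ($\Pmeas_\opL$-a.e.) such that $\pi(\Kern_{F(\opL)}) = \tilde F(d\pi(\opL))$ for all $\pi \in \widehat G$. Since $F = \tilde F$ $\Pmeas_\opL$-almost everywhere, we have $\Kern_{\tilde F(\opL)} = \Kern_{F(\opL)} \in L^1(G)$, so the identity $\pi(\Kern_{\tilde F(\opL)}) = \tilde F(d\pi(\opL))$ holds for all $\pi \in \widehat G$, i.e.\ \eqref{eq:rep_fc_cont} holds for $\tilde F$. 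Applying Proposition \ref{prp:almost_continuous} to $\tilde F$ then yields a continuous function on $\Rnon$ equal to $\tilde F$ (hence to $F$) $\Pmeas_\opL$-almost everywhere, which is exactly statement \eqref{st:rl}.

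For the converse \ref{en:rl}$\Rightarrow$\ref{en:change_rep}, suppose \eqref{st:rl} holds and let $F$ be a bounded Borel function with $\Kern_{F(\opL)} \in L^1(G)$. By \eqref{st:rl} there is a function $\tilde F \in C_0(\Rnon)$ (in particular continuous and bounded) with $F = \tilde F$ $\Pmeas_\opL$-a.e. Then $\Kern_{\tilde F(\opL)} = \Kern_{F(\opL)} \in L^1(G)$, and since $\tilde F$ is continuous, Proposition \ref{prp:rep_fc_cont} applies to give $\pi(\Kern_{\tilde F(\opL)}) = \tilde F(d\pi(\opL))$ for all unitary $\pi$, in particular for all $\pi \in \widehat G$. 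Rewriting the left-hand side as $\pi(\Kern_{F(\opL)})$ (again using $\Kern_{\tilde F(\opL)} = \Kern_{F(\opL)}$) gives exactly \eqref{eq:rep_fc_mod}, establishing \ref{en:change_rep}. The main subtlety to be careful about, rather than a genuine obstacle, is the consistent bookkeeping of the equivalence modulo $\Pmeas_\opL$: one must repeatedly use the fact recorded in the introduction that $\Kern_{F(\opL)}$ depends only on the class of $F$ modulo $\Pmeas_\opL$, so that replacing $F$ by a $\Pmeas_\opL$-a.e.\ equal function preserves both the $L^1$ kernel hypothesis and the kernel itself, allowing Propositions \ref{prp:rep_fc_cont} and \ref{prp:almost_continuous} to be invoked for the continuous representative.
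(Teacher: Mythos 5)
Your proposal is correct and follows essentially the same route as the paper: the implication \ref{en:rl}$\Rightarrow$\ref{en:change_rep} via Proposition \ref{prp:rep_fc_cont} applied to the continuous representative, and \ref{en:change_rep}$\Rightarrow$\ref{en:rl} via Proposition \ref{prp:almost_continuous} applied to the representative satisfying \eqref{eq:rep_fc_mod}. Your explicit bookkeeping that $\Kern_{\tilde F(\opL)} = \Kern_{F(\opL)}$ when $F = \tilde F$ $\Pmeas_\opL$-almost everywhere is exactly the point the paper uses implicitly, so there is nothing to add.
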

\begin{proof}
If \ref{en:rl} holds, for all bounded Borel functions $F : \Rnon \to \CC$ such that $\Kern_{F(\opL)} \in L^1(G)$, we can take the continuous representative $\tilde F$ modulo $\Pmeas_\opL$ given by statement \eqref{st:rl} and apply Proposition \ref{prp:rep_fc_cont} to $\tilde F$ to obtain \eqref{eq:rep_fc_mod}.

Conversely, if \ref{en:change_rep} holds, for all bounded Borel functions $F : \Rnon \to \CC$ such that $\Kern_{F(\opL)} \in L^1(G)$ we can take the representative $\tilde F$ modulo $\Pmeas_\opL$ given by \ref{en:change_rep} and obtain, by Proposition \ref{prp:almost_continuous}, that $F = \tilde F = \tilde{\tilde F}$ $\Pmeas_\opL$-almost everywhere for some continuous function $\tilde{\tilde F} : \Rnon \to \CC$.
\end{proof}

Assume now that the Lie group of polynomial growth $G$ is type I. Then a Plancherel formula for the group Fourier transform holds. In other words, if $\widehat G$ is the dual object of $G$ (that is, the collection of irreducible unitary representations of $G$ modulo equivalence), then there exists a measurable structure on $\widehat G$ so that $\pi \mapsto \Hilb^\pi$ and $\pi \mapsto \pi$ are respectively a measurable field of Hilbert spaces and a measurable fields of unitary representations thereon; in addition, there exists a unique measure $\GPmeas$ on $\widehat G$, called Plancherel measure, such that the correspondence
\[
\Four : f \mapsto (\pi(f))_{\pi \in \widehat G},
\]
initially defined on $L^1 \cap L^2(G)$, extends to an isometric isomorphism from $L^2(G)$ to $\int^\oplus_{\widehat G} \HS(\Hilb^\pi) \,d\GPmeas(\pi)$. In particular
\begin{equation}\label{eq:Gplancherel}
\int_G |f(x)|^2 \,d\meas(x) = \int_{\widehat G} \| \pi(f) \|_{\HS}^2 \,d\GPmeas(\pi)
\end{equation}
for all $f \in L^1 \cap L^2(G)$ (see \cite[\S 7.5]{folland_course_1995} and references therein). 

Let $\opL$ be a sub-Laplacian on $G$. By means of the group Plancherel formula, we can obtain a version of Proposition \ref{prp:rep_fc_cont} where the continuity assumption on $F$ is weakened.

\begin{prp}\label{prp:rep_fc}
Let $F : \Rnon \to \CC$ be a bounded Borel function such that $K_{F(\opL)} \in L^1(G)$. Then
\begin{equation}\label{eq:rep_fc}
\pi(\Kern_{F(\opL)}) = F(d\pi(\opL))
\end{equation}
for $\GPmeas$-a.e.\ $\pi \in \widehat G$.
\end{prp}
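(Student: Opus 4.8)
The plan is to prove Proposition~\ref{prp:rep_fc} by approximating the given bounded Borel function $F$ by a sequence of \emph{continuous} functions for which the desired identity \eqref{eq:rep_fc} is already known via Proposition~\ref{prp:rep_fc_cont}, and then pass to the limit using the group Plancherel formula \eqref{eq:Gplancherel}. Since the continuity hypothesis is exactly what Proposition~\ref{prp:rep_fc_cont} requires and what we must dispense with here, the strategy is to exploit the fact that $\Sz(\Rnon)$ (or $C_c(\Rnon)$) is dense in $L^2(\Rnon,\Pmeas_\opL)$ together with the isometry \eqref{eq:opLplancherel}, and to transport the convergence from the scalar side to the operator side using \eqref{eq:Gplancherel}. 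The key point is that both sides of \eqref{eq:rep_fc}, viewed as measurable fields of operators over $\widehat G$, can be controlled in the Hilbert--Schmidt norm.

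First I would reduce to the case where $F$ has compact support: writing $F = F \chr_{[0,R]} + F\chr_{(R,\infty)}$ and using that the heat kernel is an approximate identity (as recalled around \eqref{eq:vanishinfty}), the high-frequency tail contributes a kernel small in $L^1$, and one checks the corresponding operators $\pi(\cdot)$ are small in Hilbert--Schmidt norm for $\GPmeas$-a.e.\ $\pi$; alternatively one multiplies by a smooth cutoff $\eta_R(\opL)$ and lets $R\to\infty$. Having reduced to $F$ compactly supported, so that $\Kern_{F(\opL)} \in L^1 \cap L^2(G)$, I would choose $F_n \in \Sz(\Rnon)$ (equivalently in $C_c(\Rnon)$, smoothed) with $F_n \to F$ in $L^2(\Rnon,\Pmeas_\opL)$ and uniformly bounded in $L^\infty$. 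By the Plancherel identity \eqref{eq:opLplancherel}, $\Kern_{F_n(\opL)} \to \Kern_{F(\opL)}$ in $L^2(G)$, and by Proposition~\ref{prp:rep_fc_cont} we have $\pi(\Kern_{F_n(\opL)}) = F_n(d\pi(\opL))$ for \emph{every} $\pi$. Now I would apply \eqref{eq:Gplancherel} to the difference $\Kern_{F_n(\opL)} - \Kern_{F_m(\opL)}$ to conclude that $\pi \mapsto \pi(\Kern_{F_n(\opL)})$ is Cauchy in $\int^\oplus_{\widehat G} \HS(\Hilb^\pi)\,d\GPmeas(\pi)$, hence converges, after passing to a subsequence, to $\pi(\Kern_{F(\opL)})$ in $\HS(\Hilb^\pi)$ for $\GPmeas$-a.e.\ $\pi$.

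It remains to identify the limit of the right-hand sides $F_n(d\pi(\opL))$ with $F(d\pi(\opL))$ for $\GPmeas$-a.e.\ $\pi$. For this I would use the spectral theorem for the self-adjoint operator $d\pi(\opL)$: if $E^\pi$ denotes its spectral resolution, then $F_n(d\pi(\opL)) = \int F_n \,dE^\pi$ and $F(d\pi(\opL)) = \int F\, dE^\pi$, and the Hilbert--Schmidt distance between them is $\int |F_n - F|^2 \,d\mu^\pi_{\HS}$ for a suitable (finite, since the operators are Hilbert--Schmidt) trace-type spectral measure $\mu^\pi_{\HS}$ on $\Rnon$. The crucial compatibility is that integrating these spectral measures against $\GPmeas$ recovers the sub-Laplacian Plancherel measure $\Pmeas_\opL$, i.e.\ $\int_{\widehat G} \mu^\pi_{\HS}\,d\GPmeas(\pi) = \Pmeas_\opL$ as measures on $\Rnon$; this is precisely the statement relating the group Plancherel measure to $\Pmeas_\opL$ that Section~\ref{s:groupplancherel} is built to establish. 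Granting this, $\int_{\widehat G} \|F_n(d\pi(\opL)) - F(d\pi(\opL))\|_\HS^2 \,d\GPmeas(\pi) = \int_\Rnon |F_n - F|^2 \,d\Pmeas_\opL \to 0$, so $F_n(d\pi(\opL)) \to F(d\pi(\opL))$ in $\HS(\Hilb^\pi)$ for $\GPmeas$-a.e.\ $\pi$ along a further subsequence. Matching the two limits yields \eqref{eq:rep_fc} $\GPmeas$-a.e.

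The step I expect to be the main obstacle is the disintegration of the sub-Laplacian Plancherel measure $\Pmeas_\opL$ over the group dual $\widehat G$, that is, making rigorous the identity $\int_{\widehat G} \mu^\pi_{\HS}\,d\GPmeas(\pi) = \Pmeas_\opL$ and ensuring the relevant spectral quantities depend measurably on $\pi$. This requires combining the two Plancherel formulas \eqref{eq:opLplancherel} and \eqref{eq:Gplancherel}: applying \eqref{eq:Gplancherel} to $\Kern_{F(\opL)}$ gives $\int_\Rnon |F|^2\,d\Pmeas_\opL = \int_{\widehat G} \|F(d\pi(\opL))\|_\HS^2\,d\GPmeas(\pi)$ (first for continuous $F$, where \eqref{eq:rep_fc} is available, then in general once the proposition is proved), which is exactly the measure-disintegration identity evaluated on $|F|^2$; so there is a mild circularity that must be broken by working with the continuous approximants $F_n$ throughout and only invoking the disintegration in the form $\int_\Rnon |F_n - F|^2\,d\Pmeas_\opL = \int_{\widehat G}\|F_n(d\pi(\opL)) - F(d\pi(\opL))\|_\HS^2\,d\GPmeas(\pi)$, whose left side is literally $\|\Kern_{F_n(\opL)} - \Kern_{F(\opL)}\|_{L^2(G)}^2$ by \eqref{eq:opLplancherel}. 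I would therefore be careful to phrase the whole argument so that the only Plancherel input on the operator side is \eqref{eq:Gplancherel} applied to genuine $L^2$ convolution kernels, avoiding any appeal to a pointwise-in-$\pi$ spectral theory of $d\pi(\opL)$ beyond what \eqref{eq:rep_fc} for continuous functions already provides.
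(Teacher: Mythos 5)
Your first half is sound, and it is a genuinely different route from the paper's: after reducing to $\Kern_{F(\opL)}\in L^1\cap L^2(G)$ (this reduction itself is best done by replacing $F$ with $F e^{-t\cdot}$, i.e.\ convolving the kernel with the heat kernel, and letting $t\downarrow 0$ at the very end; your tail-cutoff version needs the approximate-identity property of $\Kern_{\eta_R(\opL)}$ in $L^1$, which is true but not free), the isometry \eqref{eq:opLplancherel} gives $\Kern_{F_n(\opL)}\to\Kern_{F(\opL)}$ in $L^2(G)$, and then \eqref{eq:Gplancherel} together with $\pi(\Kern_{F_n(\opL)})=F_n(d\pi(\opL))$ (Proposition \ref{prp:rep_fc_cont}) yields, along a subsequence, $F_n(d\pi(\opL))\to\pi(\Kern_{F(\opL)})$ in $\HS(\Hilb^\pi)$ for $\GPmeas$-a.e.\ $\pi$. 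Up to here there is no circularity.

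The gap is in the final identification of the limit, and the fix you propose does not break the circularity you correctly identify. The identity you want to invoke, $\int_\Rnon |F_n-F|^2\,d\Pmeas_\opL=\int_{\widehat G}\|F_n(d\pi(\opL))-F(d\pi(\opL))\|_\HS^2\,d\GPmeas(\pi)$, contains the operators $F(d\pi(\opL))$ for the \emph{Borel} function $F$: it is an instance of the disintegration identity for Borel functions, which is exactly what Proposition \ref{prp:rep_fc} (via \eqref{eq:bi_plancherel}, proved in the paper \emph{from} Proposition \ref{prp:rep_fc}) is needed to establish. What the continuous theory gives you without circularity is only convergence of $F_n(d\pi(\opL))$ to $\pi(\Kern_{F(\opL)})$, not to $F(d\pi(\opL))$ --- and equating those two is the whole content of the proposition. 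To close the gap you must prove, independently, the measure-level statement that $\nu(A):=\int_{\widehat G}\tr \chr_A(d\pi(\opL))\,d\GPmeas(\pi)$ equals $\Pmeas_\opL(A)$ (or at least $\nu\ll\Pmeas_\opL$, so that $\Pmeas_\opL$-a.e.\ pointwise convergence of a bounded sequence $F_n\to F$ forces $F_n(d\pi(\opL))\to F(d\pi(\opL))$ strongly for $\GPmeas$-a.e.\ $\pi$). This can be done using only continuous functions --- e.g.\ test against $e^{-2t\lambda}$, $t>0$: Proposition \ref{prp:rep_fc_cont} applied to the heat kernels plus the two Plancherel formulas show that $\nu$ and $\Pmeas_\opL$ have the same Laplace transform, hence coincide --- but it presupposes that $\pi\mapsto\chr_A(d\pi(\opL))$ (equivalently $\pi\mapsto F(d\pi(\opL))$ for Borel $F$) is a measurable field of operators, and that measurability is the real technical content of the statement. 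It is precisely what the paper's proof supplies by a different mechanism: reduce to the bounded operator $E=\exp(-\opL)$, observe that $\Four$ intertwines $E$ with the decomposable operator of left multiplication by $\pi(\Kern_E)$, and use uniqueness of the Borel functional calculus (together with the measurability lemma for direct integrals) to conclude that $\Four$ intertwines $F(E)$ with $\int_{\widehat G}^\oplus F(M_\pi)\,d\GPmeas(\pi)$, finishing with an approximate identity. So your approach is completable, but as written the crucial step rests on the statement being proved.
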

\begin{proof}
Recall that $d\pi(\opL)$ is (essentially) self-adjoint and nonnegative on $\Hilb^\pi$ for all $\pi \in \widehat G$, and moreover, if $E = \exp(-\opL)$, then $\Kern_E \in \Sz(G)$ and $\pi(\Kern_{E}) = \exp(-d\pi(\opL))$ for all $\pi \in \widehat G$ by Proposition \ref{prp:rep_fc_cont}. By spectral mapping, we are then reduced to proving the following result: for every bounded Borel function $F : [0,1] \to \CC$ such that $\Kern_{F(E)} \in L^1(G)$, the identity
\[
\pi(\Kern_{F(E)}) = F(\pi(\Kern_{E}))
\]
holds for $\GPmeas$-almost all $\pi \in \widehat G$.

Note now that, for all $f \in L^1 \cap L^2(G)$ and $\pi \in \widehat G$,
\[
\pi(E f) = \pi(f * \Kern_{E}) = \pi(\Kern_{E}) \pi(f);
\]
in other words, $\Four$ intertwines $E$ and $M = \int^\oplus_{\widehat G} M_\pi \,d\GPmeas(\pi)$, where $M_\pi : \HS(\Hilb^\pi) \to \HS(\Hilb^\pi)$ is the operator of multiplication on the left by $\pi(\Kern_{E})$. By uniqueness of the functional calculus (see, e.g., \cite[Theorem 1.51]{folland_course_1995}), $\Four$ intertwines $F(E)$ and $F(M)$ as well.

We now claim that
\[
F(M) = \int^\oplus_{\widehat G} F(M_\pi) \,d\GPmeas(\pi).
\]
This again follows by uniqueness of the functional calculus. Indeed, the fact that $\pi \mapsto F(M_\pi)$ is a measurable field of operators follows easily from an approximation argument \cite[Lemma 1.50]{folland_course_1995} and the fact that $\pi \mapsto M_\pi$ is. Moreover $F \mapsto \Phi(F) := \int^\oplus_{\widehat G} F(M_\pi) \,d\GPmeas(\pi)$ is clearly a $*$-homomorphism (by properties of direct integrals and of the functional calculus for the $M_\pi$), which maps the identity function to $M$. Finally, the fact that $\Phi(F_n) \to \Phi(F)$ strongly (as operators on $\int^\oplus_{\widehat G} \HS(\Hilb^\pi) \,d\GPmeas(\pi)$) whenever $F_n \to F$ pointwise boundedly is easily seen by dominated convergence, since $F_n(M_\pi) \to F(M_\pi)$ strongly for all $\pi \in \widehat G$ \cite[\S II.2.3, Proposition 4]{dixmier_vonneumann_1981}.

Recall that, for all $\pi \in \widehat G$, $M_\pi$ is the operator of multiplication on the left by $\pi(\Kern_{E})$. We further claim that $F(M_\pi)$ is the operator of multiplication on the left by $F(\pi(\Kern_{E}))$. Similarly as above, this also follows by uniqueness of the functional calculus, and corresponds to the fact that $F(\pi(\Kern_{E}) \otimes I) = F(\pi(\Kern_E)) \otimes I$ as operators on the Hilbert tensor product $\Hilb^\pi \otimes (\Hilb^\pi)^*$.

By putting all together, we obtain that, for all $f \in L^2(G)$,
\[
\Four (F(E) f) = \left(\int^\oplus_{\widehat G} F(M_\pi) \,d\GPmeas(\pi)\right) \Four f,
\]
that is,
\[
\Four(F(E) f)(\pi) = F(\pi(\Kern_E)) \, \Four(f)(\pi)
\]
for $\GPmeas$-almost all $\pi \in \widehat G$ (note that the set of $\pi$ where this equality holds may depend on $F$ and $f$). Under the assumption $\Kern_{F(E)} \in L^1(G)$ and $f \in L^1 \cap L^2(G)$, we also have $\Four(F(E) f)(\pi) = \pi(f * \Kern_{F(E)}) = \pi(\Kern_{F(E)}) \pi(f)$, and therefore the above identity can be rewritten as
\[
\pi(\Kern_{F(E)}) \pi(f) = F(\pi(\Kern_E)) \pi(f)
\]
for $\GPmeas$-almost all $\pi \in \widehat G$. If we take as $f$ a countable approximate identity on $G$, then $\pi(f)$ converges in the strong operator topology to the identity of $\Hilb^\pi$ for all $\pi\in \widehat G$, and from the last identity we can derive \eqref{eq:rep_fc}.
\end{proof}

One might hope to obtain a proof of statement \eqref{st:rl} by combining Proposition \ref{prp:rep_fc} with Corollary \ref{cor:equivalent_rl}.
 However, in Proposition \ref{prp:rep_fc} we obtain \eqref{eq:rep_fc} for almost all $\pi \in \widehat G$, i.e., up to a $\GPmeas$-null set; instead Corollary \ref{cor:equivalent_rl}\ref{en:change_rep} requires the analogous identity for all $\pi \in \widehat G$, up to changing $F$ on a $\Pmeas_\opL$-null set.

In these respects, it seems relevant to relate the Plancherel measure $\Pmeas_\opL$ associated to $\opL$ and the group Plancherel measure $\GPmeas$. Indeed the following observation allows us to write $\Pmeas_\opL$ as a sort of push-forward of $\GPmeas$ (cf.\ \cite[\S 4.4.1]{martini_multipliers_2010}).

\begin{prp}
For all bounded Borel functions $F : \Rnon \to \CC$ such that $\Kern_{F(\opL)} \in L^1 \cap L^2(G)$,
\begin{equation}\label{eq:bi_plancherel}
\|F\|_{L^2(\Rnon,\Pmeas_\opL)}^2 = \int_{\widehat G} \|F(d\pi(\opL))\|_{\HS}^2 \,d\GPmeas(\pi).
\end{equation}
This applies, in particular, to all $F \in \Sz(\Rnon)$.
\end{prp}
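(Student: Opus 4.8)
The plan is to read off the identity \eqref{eq:bi_plancherel} as a direct consequence of the two Plancherel formulas at our disposal, glued together by Proposition \ref{prp:rep_fc}. Fix a bounded Borel function $F : \Rnon \to \CC$ with $\Kern_{F(\opL)} \in L^1 \cap L^2(G)$ and set $f = \Kern_{F(\opL)}$. First I would apply the group Plancherel formula \eqref{eq:Gplancherel} to $f \in L^1 \cap L^2(G)$, obtaining
\[
\int_G |\Kern_{F(\opL)}(x)|^2 \,d\meas(x) = \int_{\widehat G} \|\pi(\Kern_{F(\opL)})\|_{\HS}^2 \,d\GPmeas(\pi).
\]

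Next I would rewrite each side. For the left-hand side, the sub-Laplacian Plancherel identity \eqref{eq:opLplancherel} gives exactly $\|F\|_{L^2(\Rnon,\Pmeas_\opL)}^2$. For the integrand on the right, Proposition \ref{prp:rep_fc} (applicable since $\Kern_{F(\opL)} \in L^1(G)$) yields $\pi(\Kern_{F(\opL)}) = F(d\pi(\opL))$ for $\GPmeas$-almost every $\pi \in \widehat G$, so that $\|\pi(\Kern_{F(\opL)})\|_{\HS}^2 = \|F(d\pi(\opL))\|_{\HS}^2$ for $\GPmeas$-almost every $\pi$; since the two integrands agree outside a $\GPmeas$-null set, the corresponding integrals coincide. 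Combining these three substitutions gives \eqref{eq:bi_plancherel}.

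For the final assertion, I would invoke Hulanicki's Theorem \ref{thm:hulanicki}: if $F \in \Sz(\Rnon)$ then $\Kern_{F(\opL)} \in \Sz(G)$, and every element of $\Sz(G)$ lies in $L^1 \cap L^2(G)$ (taking $N = 0$ and $W = \id$ in the seminorms \eqref{eq:szseminorm} for $p = 1$ and $p = 2$ shows finiteness of the $L^1$ and $L^2$ norms). Hence the hypothesis $\Kern_{F(\opL)} \in L^1 \cap L^2(G)$ is automatically satisfied and \eqref{eq:bi_plancherel} holds for all $F \in \Sz(\Rnon)$.

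As for difficulties, there are essentially none of substance: the statement is a formal assembly of \eqref{eq:Gplancherel}, \eqref{eq:opLplancherel} and Proposition \ref{prp:rep_fc}. The only point requiring a modicum of care is the measure-theoretic bookkeeping on the right-hand side—checking that the $\GPmeas$-almost-everywhere identity of operators provided by Proposition \ref{prp:rep_fc} suffices to equate the two integrals, which it does since altering an integrand on a null set leaves the integral unchanged.
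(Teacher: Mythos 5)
Your proof is correct and follows essentially the same route as the paper: both chain together the sub-Laplacian Plancherel identity \eqref{eq:opLplancherel}, the group Plancherel formula \eqref{eq:Gplancherel}, and the $\GPmeas$-almost-everywhere identity $\pi(\Kern_{F(\opL)}) = F(d\pi(\opL))$ from Proposition \ref{prp:rep_fc}. Your additional remarks (the null-set bookkeeping and the use of Theorem \ref{thm:hulanicki} to cover the case $F \in \Sz(\Rnon)$) are details the paper leaves implicit, and they are handled correctly.
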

\begin{proof}
By Proposition \ref{prp:rep_fc} and the Plancherel formulas \eqref{eq:opLplancherel} for $\opL$ and \eqref{eq:Gplancherel} for the group Fourier transform,
\begin{multline*}
\|F\|_{L^2(\Rnon,\Pmeas_\opL)}^2 = \|\Kern_{F(\opL)}\|_{L^2(G)}^2 \\
= \int_{\widehat G} \|\pi(\Kern_{F(\opL)})\|_{\HS}^2 \,d\GPmeas(\pi) = \int_{\widehat G} \|F(d\pi(\opL))\|_{\HS}^2 \,d\GPmeas(\pi)
\end{multline*}
and we are done.
\end{proof}

This result will prove useful in obtaining statement \eqref{st:rl} for the group $\Mot$.

\section{Plancherel measure and Riemann--Lebesgue lemma on \texorpdfstring{$\Mot$}{SE(2)}}\label{s:rlmot}

We use the notation of Section \ref{s:se2}.

\begin{prp}
Let $G = \Mot$. For all $K \in L^1 \cap L^2(G)$,
\begin{equation}\label{eq:motplancherel}
\|K\|_{L^2(\Mot)}^2 = \int_0^\infty \|\pi_r(K)\|_{\HS}^2 \,r\,dr.
\end{equation}
In other words, the group Plancherel measure of $\Mot$ is concentrated on the collection of representations $\{\pi_r\}_{r \in \Rpos} \subseteq \widehat G$ and is given by $r \,dr$ in terms of the parameter $r \in \Rpos$.
\end{prp}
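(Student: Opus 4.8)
The plan is to prove \eqref{eq:motplancherel} by a direct computation based on the explicit formula \eqref{eq:motrepn} for $\pi_r(K)$, and then to read off the structure of the group Plancherel measure using uniqueness. First I would observe that \eqref{eq:motrepn} exhibits $\pi_r(K)$ as an integral operator on $L^2(\TT)$ with kernel $k_r(e^{i\phi},e^{i\theta}) = K(\widehat{re^{-i\theta}},e^{i(\phi-\theta)})$, where $\widehat{\cdot}$ denotes the partial Euclidean Fourier transform of $K$ along $\CC$. For $K \in L^1 \cap L^2(\Mot)$ this kernel lies in $L^2(\TT \times \TT)$, so $\pi_r(K)$ is Hilbert--Schmidt and
\[
\|\pi_r(K)\|_{\HS}^2 = \int_\TT \int_\TT |K(\widehat{re^{-i\theta}},e^{i(\phi-\theta)})|^2 \,d\phi \,d\theta.
\]
To make this rigorous for all $K \in L^1 \cap L^2$ (rather than only the $L^1$ functions for which \eqref{eq:motrepn} was derived), I would first argue on a dense class, such as $C_c(\Mot)$ or $\Sz(\Mot)$, justifying the Fubini manipulations there, and then extend by continuity.

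Next I would perform the change of variables $\psi = \phi - \theta$, which decouples the two angular integrations and yields
\[
\|\pi_r(K)\|_{\HS}^2 = \int_\TT \int_\TT |K(\widehat{re^{-i\theta}},e^{i\psi})|^2 \,d\psi \,d\theta.
\]
Integrating against $r \, dr$ and using that $(r,\theta) \mapsto re^{-i\theta}$, equipped with the measure $r\,dr\,d\theta$, is exactly polar coordinates on $\CC$, I would recognise the $(r,\theta)$-integration as an integration over $\CC$ with respect to Lebesgue measure, so that
\[
\int_0^\infty \|\pi_r(K)\|_{\HS}^2 \,r\,dr = \int_\TT \int_\CC |K(\widehat\xi,e^{i\psi})|^2 \,d\xi \,d\psi,
\]
up to the normalisation constant discussed below. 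Applying the Euclidean Plancherel theorem in the $\CC$-variable turns $\int_\CC |K(\widehat\xi,e^{i\psi})|^2 \,d\xi$ into $\int_\CC |K(z,e^{i\psi})|^2\,dz$, and integrating in $\psi$ recovers $\|K\|_{L^2(\Mot)}^2$, as desired.

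Two points require care. The first is purely the bookkeeping of normalisation constants: the normalisation of Haar measure on $\TT$, the convention chosen for the partial Fourier transform $\widehat{\cdot}$, and the constant appearing in the two-dimensional Euclidean Plancherel theorem must be matched so that the resulting spectral density is exactly $r\,dr$; these are the conventions already fixed at the beginning of Section \ref{s:se2}. The second, and more conceptual, point is the passage from the isometry \eqref{eq:motplancherel} to the assertion about the group Plancherel measure $\GPmeas$. Here I would invoke that $\Mot$ is of type I together with the uniqueness of the Plancherel measure: the identity \eqref{eq:motplancherel} exhibits $K \mapsto (\pi_r(K))_{r>0}$ as an isometry of $L^2(\Mot)$ into $\int_0^\infty \HS(L^2(\TT)) \,r\,dr$, and the remaining irreducible representations of $\Mot$ (the characters factoring through $\Mot/\RR^2 \cong \TT$) correspond, under the Mackey orbit description of $\widehat{\Mot}$, to the single orbit $\{0\} \subseteq \CC$, hence form a $\GPmeas$-null set. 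I expect the main obstacle to be not the computation itself but ensuring this last identification is clean --- that is, that the computed isometry is genuinely onto the full Plancherel direct integral, which is what pins the measure down uniquely as $r\,dr$ concentrated on $\{\pi_r\}_{r \in \Rpos}$.
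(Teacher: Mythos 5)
Your proposal is correct and follows essentially the same route as the paper's proof: both identify $\pi_r(K)$ via \eqref{eq:motrepn} as an integral operator on $L^2(\TT)$ with kernel $K(\widehat{re^{-i\theta}},e^{i(\phi-\theta)})$, compute $\|\pi_r(K)\|_{\HS}^2$ as the $L^2(\TT\times\TT)$ norm of that kernel, integrate against $r\,dr$, pass to polar coordinates on $\CC$, and conclude by the Euclidean Plancherel theorem. The additional care you devote to normalisation constants and to deducing the statement about $\GPmeas$ from the isometry (type I, Mackey orbits, uniqueness of the Plancherel measure) only spells out steps the paper leaves implicit.
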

\begin{proof}
From \eqref{eq:motrepn} one obtains that, for all $K \in L^1 \cap L^2(G)$,
\[
\|\pi_r(K)\|_{\HS}^2 = \int_\TT \int_\TT |K(\widehat{r e^{i\theta}},e^{i\phi})|^2 \,d\theta \,d\phi.
\]
Integration of the above expression with respect to $r \,dr$, a change of variable and the Plancherel formula for the Euclidean Fourier transform then yield \eqref{eq:motplancherel}.
\end{proof}

\begin{prp}\label{prp:mot_plancherel_ac}
Let $\opL$ be any sub-Laplacian on $\Mot$. Then the Plancherel measure $\Pmeas_\opL$ associated to $\opL$ is absolutely continuous with respect to the Lebesgue measure on $\Rnon$.
\end{prp}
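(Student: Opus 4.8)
The plan is to make $\Pmeas_\opL$ explicit by combining the ``bi-Plancherel'' identity \eqref{eq:bi_plancherel} with the group Plancherel measure \eqref{eq:motplancherel} of $\Mot$ and the spectral resolution of $d\pi_r(\opL)$ in terms of Mathieu eigenfunctions, thereby writing $\Pmeas_\opL$ as a countable superposition of push-forwards of Lebesgue measure, each of which is manifestly absolutely continuous. First I would reduce to a normal form: by Proposition \ref{prp:classif_sublap_mot}, Lemma \ref{lem:mot_automorphism} and a rescaling of $\opL$ (which only dilates the variable of $\Pmeas_\opL$ and hence preserves absolute continuity), it suffices to treat the two cases $\opL = \Delta = -(\partial_\theta^2+\partial_x^2+\partial_y^2)$ of \eqref{eq:sub2} and $\opL = \opL_0 + \beta\Delta_0$ of \eqref{eq:sub1}, with $\beta \geq 0$. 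In the first case $\opL$ is the translation-invariant Laplacian on the abelian group $\CC \times \TT$, so absolute continuity of $\Pmeas_\opL$ is already contained in Proposition \ref{prp:abelianRL} (via \eqref{eq:RTPmeas2} with $n=2$, $m=1$). Thus the only genuine case is $\opL = \opL_0 + \beta\Delta_0$.

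For this $\opL$, recall from the proof of Proposition \ref{prp:se2sz} that $d\pi_r(\opL) = \opML_{r^2} + \beta r^2$, so $d\pi_r(\opL)$ has discrete spectrum, with eigenvalues $\psi_n(r^2)$ and normalised eigenfunctions $H_n^{r^2}$, where $\psi_n(q) = \lambda_n^q + \beta q$ and $\lambda_n^q, H_n^q$ are the eigenvalues and eigenfunctions of $\opML_q$ indexed as in Section \ref{s:mathieu}. Hence $F(d\pi_r(\opL))$ is diagonal in the basis $\{H_n^{r^2}\}$ with eigenvalues $F(\psi_n(r^2))$, giving $\|F(d\pi_r(\opL))\|_{\HS}^2 = \sum_n |F(\psi_n(r^2))|^2$. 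Taking $F \in \Sz(\Rnon)$ (so $\Kern_{F(\opL)} \in \Sz(\Mot) \subseteq L^1\cap L^2(\Mot)$, and for each $r$ only finitely many $\psi_n(r^2)$ lie in $\supp F$ by the asymptotics of Proposition \ref{prp:eigen_asymp}), the identity \eqref{eq:bi_plancherel} together with \eqref{eq:motplancherel} yields, after the change of variable $q=r^2$ and Tonelli's theorem,
\begin{equation*}
\int_\Rnon |F|^2 \,d\Pmeas_\opL = \int_0^\infty \sum_n |F(\psi_n(r^2))|^2 \,r\,dr = \frac12 \sum_n \int_0^\infty |F(\psi_n(q))|^2 \,dq = \int_\Rnon |F|^2 \,d\nu,
\end{equation*}
where $\nu = \frac12 \sum_n (\psi_n)_*(\chr_{[0,\infty)}\,dq)$ is the indicated superposition of push-forwards of Lebesgue measure.

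It then remains to (i) upgrade this equality of quadratic forms to the equality of measures $\Pmeas_\opL = \nu$, and (ii) check that $\nu$ is absolutely continuous. For (i) I would polarise, using that $\Sz(\Rnon)$ is an algebra contained in $L^2(\Rnon,\Pmeas_\opL)$, to obtain $\int F\bar G\,d\Pmeas_\opL = \int F\bar G\,d\nu$ for all $F,G \in \Sz(\Rnon)$, and then a density/cut-off argument (both measures being Radon, hence finite on compacts) to conclude that the two measures agree on $C_c(\Rnon)$. For (ii) the key point is that each branch $\psi_n$ is smooth and strictly increasing: smoothness is provided by Proposition \ref{prp:eigen_smooth}, while strict monotonicity follows from the Feynman--Hellmann formula $\frac{d}{dq}\lambda_n^q = \int_\TT (\sin^2\phi)\,|H_n^q(e^{i\phi})|^2 \,d\phi > 0$ (the integral being strictly positive since an $L^2$ eigenfunction cannot be supported on the null set $\{\sin\phi = 0\}$), together with $\beta \geq 0$. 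Hence each $\psi_n$ is a $C^1$ diffeomorphism onto its image with $C^1$ inverse, so $(\psi_n)_*(\chr_{[0,\infty)}\,dq)$ has a locally integrable density and is absolutely continuous; since $\nu$ is locally a finite sum of such measures, it is $\sigma$-finite and absolutely continuous, giving $\Pmeas_\opL = \nu \ll \mathrm{Leb}$.

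The main obstacle I anticipate lies in this last paragraph, and is twofold. On the measure-theoretic side, turning the $L^2$-norm identity into an identity of measures requires care with integrability of Schwartz functions against $\Pmeas_\opL$ and with the density argument, since $\Sz(\Rnon)$ contains no function identically equal to $1$. On the analytic side, one must ensure that the monotonicity and smoothness of the eigenvalue branches established in Section \ref{s:mathieu} hold \emph{across all} $n$ so as to legitimately sum the push-forwards; the strict positivity of $\frac{d}{dq}\lambda_n^q$, which rules out any flat portion of $\psi_n$ (and hence any singular mass in the push-forward), is the real engine of the proof.
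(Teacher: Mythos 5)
Your proposal follows essentially the same route as the paper: the paper likewise reduces to the two normal forms, handles $\opL = \Delta$ via \eqref{eq:RTPmeas2}, and for $\opL = \opL_0 + \beta\Delta_0$ combines \eqref{eq:bi_plancherel} with \eqref{eq:motplancherel}, the Mathieu spectral decomposition of $d\pi_r(\opL)$, and the change of variables $\lambda = \psi_{(i,j),k}(q)$ --- using exactly the fact that each eigenvalue branch is a smooth increasing bijection with nowhere-vanishing derivative --- to exhibit an explicit Lebesgue density for $\Pmeas_\opL$. The differences are only presentational (the paper writes the density directly, leaving implicit the quadratic-form-to-measure step you spell out via polarisation); one small slip on your side is the parenthetical claim that only finitely many $\psi_n(r^2)$ meet $\supp F$ for $F \in \Sz(\Rnon)$, which fails for non-compactly-supported Schwartz functions but is harmless, since the rapid decay of $F$ together with the quadratic growth of the eigenvalues in $n$ gives the convergence needed for Tonelli's theorem.
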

\begin{proof}
As in the proof of Proposition \ref{prp:se2sz}, we distinguish between two cases, according to the form of $\opL$.

In the case $\opL = \Delta$, the Plancherel measure is a given by \eqref{eq:RTPmeas2} (for $n=2$ and $m=1$) and is absolutely continuous with respect to the Lebesgue measure.

Assume instead that $\opL = \opL_0 + \beta \Delta_0$ for some $\beta \in \Rnon$.
From Section \ref{s:se2} we know that $d\pi_r(\opL) = \opML_{r^2} + \beta r^2$.
Let now $\lambda_{(i,j),k}^q$ for $(i,j) \in \{0,1\}^2$ and $k \in \NN$ be the eigenvalues of $\opML_q$ as in Section \ref{s:mathieu}. Then
\[
\|F(d\pi_r(\opL))\|_{\HS}^2 = \sum_{(i,j) \in \{0,1\}^2} \sum_{k \in \NN} |F(\lambda_{(i,j),k}^{r^2}+\beta r^2)|^2 
\]
and from \eqref{eq:bi_plancherel} we obtain the following expression for the Plancherel measure $\Pmeas_\opL$ associated with the sub-Laplacian $\opL$:
\[\begin{split}
\int_{\Rnon} |F(\lambda)|^2 \,d\Pmeas_\opL(\lambda)
 &= \int_0^\infty \sum_{(i,j) \in \{0,1\}^2} \sum_{k \in \NN} |F(\lambda_{(i,j),k}^{r^2}+\beta r^2)|^2 \,r\,dr \\
 &= \frac{1}{2} \int_0^\infty \sum_{(i,j) \in \{0,1\}^2} \sum_{k \in \NN} |F(\lambda_{(i,j),k}^{q}+\beta q)|^2 \,dq 
\end{split}\]
Let now $\psi_{(i,j),k}(q) = \lambda_{(i,j),k}^{q}+\beta q$. Then, by Propositions \ref{prp:eigen_continuous}, \ref{prp:eigen_smooth} and \ref{prp:eigen_asymp}, $\psi_{(i,j),k} : \RR \to \RR$ is a smooth increasing bijection, whose derivative never vanishes. Hence we can rewrite the above formula as follows:
\[\begin{split}
\int_{\Rnon} |F(\lambda)|^2 \,d\Pmeas_\opL(\lambda)
 &= \frac{1}{2} \sum_{(i,j) \in \{0,1\}^2} \sum_{k \in \NN}  \int_{\psi_{(i,j),k}(0)}^\infty |F(\lambda)|^2 \frac{d\lambda}{\psi_{(i,j),k}'(\psi_{(i,j),k}^{-1}(\lambda))} \\
 &= \frac{1}{2} \int_{\psi_{(i,j),k}(0)}^\infty |F(\lambda)|^2 \sum_{\substack{(i,j) \in \{0,1\}^2, \, k\in\NN \\ \lambda > \psi_{(i,j),k}(0) }} \frac{1}{\psi_{(i,j),k}'(\psi_{(i,j),k}^{-1}(\lambda))} \,d\lambda.
\end{split}\]
This shows that the Plancherel measure $\Pmeas_\opL$ associated with $\opL$ is absolutely continuous with respect to the Lebesgue measure on $\Rnon$.
\end{proof}

The following result concludes the proof of Theorem \ref{thm:riemannlebesgue}\ref{en:riemannlebesguemot}.

\begin{prp}
Statement \eqref{st:rl} holds for any sub-Laplacian $\opL$ on $G = \Mot$.
\end{prp}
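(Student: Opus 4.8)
The plan is to follow the strategy of Proposition~\ref{prp:se2sz}, extracting $F$ from a single matrix coefficient of $\pi_r(\Kern_{F(\opL)})$, but now only under the weaker hypothesis $\Kern_{F(\opL)} \in L^1(\Mot)$ in place of $\Kern_{F(\opL)} \in \Sz(\Mot)$. As in the proof of Proposition~\ref{prp:mot_plancherel_ac} I would distinguish the two normal forms of $\opL$ and treat in detail the case $\opL = \opL_0 + \beta\Delta_0$, the case $\opL = \Delta$ being entirely analogous and in fact simpler, since there $d\pi_r(\Delta) = -\partial_\phi^2 + r^2$ and the relevant eigenfunction is the constant one. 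Given a bounded Borel $F$ with $\Kern_{F(\opL)} \in L^1(\Mot)$, recall that $\Mot$ is type~I and, by \eqref{eq:motplancherel}, its group Plancherel measure is $r\,dr$ on $\{\pi_r\}_{r \in \Rpos}$; hence Proposition~\ref{prp:rep_fc} yields $\pi_r(\Kern_{F(\opL)}) = F(d\pi_r(\opL)) = F(\opML_{r^2} + \beta r^2)$ for Lebesgue-a.e.\ $r > 0$. Pairing this operator identity against the normalised ground-state Mathieu eigenfunction $H^{r^2} = H^{r^2}_{(0,0),0}$, whose eigenvalue is $\psi(r^2)$ with $\psi(q) = \lambda^q_{(0,0),0} + \beta q$, and using \eqref{eq:motrepn}, I obtain that $F(\psi(r^2)) = G_F(r)$ for a.e.\ $r > 0$, where $G_F$ is the function defined for \emph{all} $r \in \RR$ by the explicit integral \eqref{eq:kernel}.

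The heart of the matter is to show that $G_F$ is continuous on $\Rnon$. The key tool is a vector-valued Riemann--Lebesgue lemma: writing $K = \Kern_{F(\opL)} \in L^1(\CC \times \TT)$ and viewing it as an element of $L^1(\CC; L^1(\TT))$, the partial Euclidean Fourier transform along $\CC$ defines a map $\xi \mapsto K(\widehat{\xi}, \cdot)$ that is continuous from $\CC$ into $L^1(\TT)$ and vanishes at infinity; this follows by approximating $K$ in $L^1(\CC; L^1(\TT))$ by finite sums of products $g(z)\,u(e^{i\phi})$ with $g \in \Sz(\CC)$ and $u \in L^1(\TT)$, whose $\CC$-Fourier transforms are manifestly continuous. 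Since in \eqref{eq:kernel} the argument $\xi = re^{-i\theta}$ ranges over the circle of radius $r$, and since $r \mapsto H^{r^2}$ is continuous in the supremum norm on $\TT$ by Proposition~\ref{prp:eigen_continuous}, one checks that as $r \to r_0$ the inner $\phi$-integral, being a pairing of an $L^1(\TT)$ element depending continuously on $(r,\theta)$ with the uniformly convergent $L^\infty(\TT)$ functions $H^{r^2}$, converges uniformly in $\theta$; integrating in $\theta$ against $H^{r^2}$ then gives $G_F(r) \to G_F(r_0)$. Here one uses only that $\|H^{r^2}\|_{L^\infty(\TT)}$ stays bounded on compact $r$-intervals.

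It then remains to transfer this continuity back to $F$. By Propositions~\ref{prp:eigen_continuous} and \ref{prp:eigen_asymp}, $\psi$ is a smooth strictly increasing bijection of $\Rnon$ onto itself with $\psi(0) = 0$, so $r \mapsto \psi(r^2)$ is a homeomorphism of $\Rnon$ that is smooth with strictly positive derivative away from $r = 0$; consequently the change of variables $\lambda = \psi(r^2)$ carries Lebesgue-null sets to Lebesgue-null sets in both directions, and the a.e.\ identity $F(\psi(r^2)) = G_F(r)$ becomes $F(\lambda) = G_F(\sqrt{\psi^{-1}(\lambda)})$ for Lebesgue-a.e.\ $\lambda \in \Rpos$. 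Since $\Pmeas_\opL$ is absolutely continuous with respect to Lebesgue measure by Proposition~\ref{prp:mot_plancherel_ac}, this identity also holds $\Pmeas_\opL$-a.e., so $\tilde F := G_F \circ \sqrt{\psi^{-1}(\cdot)}$ is a continuous representative of $F$ modulo $\Pmeas_\opL$. Finally, the ground-state contribution to the density computed in Proposition~\ref{prp:mot_plancherel_ac} is positive on all of $\Rpos$, whence $\Sigma_\opL = \supp \Pmeas_\opL = \Rnon$; combining the continuity of $\tilde F$ with \eqref{eq:vanishinfty}, which holds whenever $\Kern_{F(\opL)} \in L^1(G)$, one concludes that $\tilde F$ vanishes at infinity, i.e.\ $\tilde F \in C_0(\Rnon)$, which is precisely statement~\eqref{st:rl}.

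The main obstacle is the continuity of $G_F$ in $r$ with only $L^1$ control on the kernel: unlike in the Schwartz setting of Proposition~\ref{prp:se2sz}, one cannot differentiate under the integral sign, and both the argument $r e^{-i\theta}$ of the partial Fourier transform and the eigenfunction $H^{r^2}$ vary with $r$. The vector-valued Riemann--Lebesgue lemma for the $\CC$-Fourier transform, together with the continuous dependence of the Mathieu eigenfunctions on the parameter, is exactly what is needed to handle this. A minor point requiring care is that $\|H^{r^2}\|_{L^\infty(\TT)}$ grows as $r \to \infty$ (the ground state concentrates near the wells of the potential), so the decay of $G_F$ at infinity cannot be read off from the integral \eqref{eq:kernel} and must instead be supplied by the general fact \eqref{eq:vanishinfty}.
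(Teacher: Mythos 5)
Your proof is correct and follows essentially the same route as the paper's: Proposition \ref{prp:rep_fc} together with the group Plancherel measure \eqref{eq:motplancherel} yields the a.e.\ identity $F(\psi(r^2)) = G_F(r)$ after pairing with the ground-state Mathieu eigenfunction, continuity of $G_F$ is obtained from the Euclidean Riemann--Lebesgue lemma and Proposition \ref{prp:eigen_continuous}, and the transfer back to $F$ uses the smooth increasing bijection $\psi$, the absolute continuity of $\Pmeas_\opL$ from Proposition \ref{prp:mot_plancherel_ac}, and \eqref{eq:vanishinfty}. Your vector-valued Riemann--Lebesgue lemma and the observation that $\supp \Pmeas_\opL = \Rnon$ simply make explicit details that the paper asserts follow ``easily,'' and your direct treatment of the case $\opL = \Delta$ via the constant eigenfunction is a harmless variant of the paper's appeal to Proposition \ref{prp:abelianRL}.
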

\begin{proof}
Let $F : \Rnon \to \CC$ be a bounded Borel function such that $\Kern_{F(\opL)} \in L^1(\Mot)$. From Proposition \ref{prp:rep_fc} and the characterisation \eqref{eq:motplancherel} of the group Plancherel measure on $\Mot$ we deduce that, for (Lebesgue) almost all $r \in \Rpos$,
\[
\pi_r(\Kern_{F(\opL)}) = F(d\pi_r(\opL)). 
\]

As in the proof of Proposition \ref{prp:se2sz}, we distinguish between two cases, according to the form of $\opL$. In the case $\opL = \Delta$ the result follows from Proposition \ref{prp:abelianRL}. Assume instead that $\opL = \opL_0 + \beta \Delta_0$ for some $\beta \in \Rnon$, so $d\pi_r(\opL) = \opML_{r^2} + \beta r^2$. If we write $\lambda^q$ in place of $\lambda^q_{(0,0),0}$ and denote by $H^q$ the corresponding normalized eigenfunction of $\opML_q$ (see Section \ref{s:mathieu}), then
\[
\langle \pi_r(\Kern_{F(\opL)}) H^{r^2}, H^{r^2} \rangle = F(\lambda^{r^2}+\beta r^2)
\]
for almost all $r \in \Rpos$; in other words,
\[
F(\lambda^{r^2}+\beta r^2) = G_F(r)
\]
for almost all $r \in \Rpos$, where $G_F : \RR \to \CC$ is defined as in \eqref{eq:kernel}. Since $\Kern_F \in L^1(\Mot)$, from the Euclidean Riemann--Lebesgue lemma and Proposition \ref{prp:eigen_continuous} it follows easily that $G_F$ is continuous. Hence the function $\tilde G_F : \Rnon \to \CC$ defined by $\tilde G_F(q) = G_F(q^{1/2})$ is also continuous and
\[
F(\psi(q)) = \tilde G_F(q)
\]
for almost all $q \in \Rnon$, where $\psi = \psi_{(0,0),0}$.
Since $\psi : \Rnon \to \Rnon$ is a smooth increasing bijection with nowhere vanishing derivative, we deduce that
\begin{equation}\label{eq:multiplier_iden}
F(\lambda) = \tilde G_F(\psi^{-1}(\lambda))
\end{equation}
for (Lebesgue) almost all $\lambda \in \Rnon$. On the other hand, since $\Pmeas_\opL$ is absolutely continuous with respect to the Lebesgue measure by Proposition \ref{prp:mot_plancherel_ac}, we can conclude that \eqref{eq:multiplier_iden} holds for $\Pmeas_\opL$-almost every $\lambda \in \Rnon$, and $\tilde G_F \circ \psi^{-1} : \Rnon \to \CC$ is continuous. By \eqref{eq:vanishinfty} it is then clear that $\tilde G_F \circ \psi^{-1} \in C_0(\Rnon)$, and we are done.
\end{proof}

\section{Appendix: Analysis of a modified Mathieu equation}\label{s:mathieu}

Let $q \in \RR$. Define the differential operator $\opML_q$ on $L^2(\TT)$ by
\[
\opML_q = -\partial_\phi^2 + q \sin^2 \phi.
\]
$\opML_q$ is a self-adjoint Schr\"odinger operator with periodic potential. The spectral theory of $\opML_q$ is discussed in several places in the literature and is related to the theory of Mathieu functions (see, e.g., \cite{wolf_mathieu_2010} and references therein). Here we are particularly interested in the behaviour of eigenvalues and eigenfunctions of $\opML_q$ as functions of the parameter $q$. Since we could not find sufficiently precise information in the literature on the behaviour of $q$-derivatives of eigenvalues and eigenfunctions, here we present an essentially self-contained derivation of the properties that we need, some of which can be also found elsewhere.

Note that $\opML_q$ is invariant under the transformations $\phi \mapsto -\phi$ and $\phi \mapsto \pi + \phi$. We can then decompose $L^2(\TT)$ into four $\opML_q$-invariant subspaces:
\[
L^2(\TT) = \bigoplus_{(i,j) \in \{0,1\}^2} L^2_{(i,j)}(\TT),
\]
where $f \in L^2_{(i,j)}(\TT)$ if and only if
\[
f(-\phi) = (-1)^i f(\phi), \qquad f(\pi+\phi) = (-1)^j f(\phi)
\]
for all $\phi \in \TT$ (in other words, $i$ determines whether $f$ is even or odd, while $j$ determines whether $f$ is $\pi$-periodic or $\pi$-antiperiodic). In this way, we can consider separately the spectral theory of $\opML_q$ restricted to each of the $L^2_{(i,j)}(\TT)$.

\begin{prp}\label{prp:discrete_spectrum}
For all $(i,j) \in \{0,1\}^2$, the restriction of $\opML_q$ to $L^2_{(i,j)}(\TT)$ has a discrete spectrum, made of a strictly increasing sequence of simple eigenvalues
\[
\lambda^q_{(i,j),0} < \lambda^q_{(i,j),1} < \lambda^q_{(i,j),2} < \dots,
\]
and, for all $k \in \NN$, the eigenfunctions of $\opML_q$ in $L^2_{(i,j)}(\TT)$ of eigenvalue $\lambda^q_{(i,j),k}$ have $2(2k + i + |i-j|)$ zeros. 
\end{prp}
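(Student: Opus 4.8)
The plan is to reduce the eigenvalue problem for $\opML_q$ on each symmetry class $L^2_{(i,j)}(\TT)$ to a regular Sturm--Liouville problem on the fundamental domain $[0,\pi/2]$, appeal to classical Sturm oscillation theory, and then transport the zero count back to the full circle by symmetry. The group generated by the two symmetries $\phi \mapsto -\phi$ and $\phi \mapsto \pi+\phi$ has order four, and $[0,\pi/2]$ is a fundamental domain for its action on $\TT$, the points with nontrivial stabiliser being exactly $0,\pi/2,\pi,3\pi/2$. First I would translate the two defining symmetries into parity conditions at the endpoints. The condition $f(-\phi) = (-1)^i f(\phi)$ prescribes parity $(-1)^i$ about $0$, hence a Neumann condition $f'(0)=0$ when $i=0$ and a Dirichlet condition $f(0)=0$ when $i=1$; combining the two symmetries gives $f(\pi-\phi) = (-1)^{i+j} f(\phi)$, i.e.\ parity $(-1)^{i+j}$ about $\pi/2$, hence Neumann $f'(\pi/2)=0$ when $|i-j|=0$ and Dirichlet $f(\pi/2)=0$ when $|i-j|=1$. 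In every case one obtains separated self-adjoint boundary conditions.

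Since the potential $q\sin^2\phi$ is even and $\pi$-periodic, it is invariant under all four symmetries, so the symmetric extension of any solution on $[0,\pi/2]$ solves the same ODE on each translate; the boundary conditions above guarantee $C^1$ matching at $0$ and $\pi/2$, and matching at $\pi$ and $3\pi/2$ follows from that at $0$ and $\pi/2$ by the translation symmetry, so the extension is automatically smooth. This makes restriction an isomorphism intertwining $\opML_q$ with the Sturm--Liouville operator $-\partial_\phi^2 + q\sin^2\phi$ under the stated boundary conditions. Classical Sturm--Liouville theory for a regular problem with separated boundary conditions on a compact interval then yields at once a discrete spectrum consisting of simple eigenvalues $\lambda^q_{(i,j),0} < \lambda^q_{(i,j),1} < \cdots$, and Sturm's oscillation theorem gives that the eigenfunction associated with $\lambda^q_{(i,j),k}$ has exactly $k$ zeros in the open interval $(0,\pi/2)$.

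It remains to count the zeros of the extended eigenfunction on all of $\TT$. Each of the $k$ interior zeros lies on a free orbit of size four, contributing $4k$ zeros. At the endpoint $0$ the eigenfunction vanishes precisely when the boundary condition there is Dirichlet, that is when $i=1$; by the relation $f(\pi) = (-1)^j f(0)$ the same holds at $\pi$, so the orbit $\{0,\pi\}$ contributes $2i$ zeros. Likewise a zero occurs at $\pi/2$ exactly when the condition there is Dirichlet, and by $f(3\pi/2) = (-1)^j f(\pi/2)$ the orbit $\{\pi/2,3\pi/2\}$ contributes $2|i-j|$ zeros. Adding these gives $4k + 2i + 2|i-j| = 2(2k+i+|i-j|)$, as claimed.

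The routine but delicate part of the argument is the bookkeeping: correctly converting each of the two symmetries into the boundary condition at the appropriate endpoint, and keeping track of which of the four special points carry a zero. The only genuine subtlety is verifying that the symmetric extension is smooth across the corners---so that the Sturm--Liouville eigenfunctions on $[0,\pi/2]$ really do correspond to honest eigenfunctions of $\opML_q$ on $\TT$---but this is handled cleanly by the invariance of the potential under the symmetry group together with the $C^1$ matching guaranteed by the boundary conditions.
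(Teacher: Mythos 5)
Your proof is correct, but it takes a genuinely different route from the paper's. You fold each symmetry class $L^2_{(i,j)}(\TT)$ onto the quarter-interval $[0,\pi/2]$, converting the two symmetries into separated Neumann/Dirichlet conditions (Neumann at $0$ iff $i=0$, Neumann at $\pi/2$ iff $|i-j|=0$), apply regular Sturm--Liouville theory once per class, and then unfold: $4k$ zeros from the free orbits of the interior zeros plus $2i$ and $2|i-j|$ from the fixed points $\{0,\pi\}$ and $\{\pi/2,3\pi/2\}$. The paper instead works outward from the whole circle: it first applies the periodic Sturm--Liouville theorem (Coddington--Levinson, Ch.~8, Thm.~3.1) to $\opML_q$ on all of $L^2(\TT)$ to get the interlaced eigenvalue list $\mu^q_0<\mu^q_1\leq\mu^q_2<\cdots$ with zero counts $2\lceil j/2\rceil$, then determines the $\pi$-(anti)periodicity type and the parity of each full-circle eigenfunction $K^q_j$ by two further applications of Sturm--Liouville theory on $[0,\pi]$ together with a comparison of zero counts, and finally sorts the $K^q_j$ into the four classes. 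Your approach is more uniform and self-contained: each class is handled independently, simplicity and discreteness are immediate for separated boundary conditions, and the count $2(2k+i+|i-j|)$ drops out of transparent orbit bookkeeping; you also correctly identify and resolve the one real subtlety (smoothness of the unfolded eigenfunction, which follows since a $C^1$ function solving the same second-order ODE on both sides of a matching point is a genuine solution across it). What the paper's route buys in exchange is global information your reduction does not immediately give: the interlacing of all eigenvalues of $\opML_q$ across the four classes, in particular that $\lambda^q_{(0,0),0}=\mu^q_0$ is the overall ground state --- a fact the paper quietly relies on later when it refers to $\lambda^q_{(0,0),0}$ as \emph{the} minimum eigenvalue of $\opML_q$; with your argument that point would need a short supplementary observation (e.g., that the ground state of the full periodic problem is nowhere vanishing, hence lies in the $(0,0)$ class).
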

\begin{proof}
From the theory of Sturm--Liouville operators \cite[Chapter 8, Theorem 3.1]{coddington_theory_1955}, we deduce that $\opML_q$ on the whole $L^2(\TT)$ has eigenvalues
\[
\mu_0^q < \mu_1^q \leq \mu_2^q < \mu_3^q \leq \mu_4^q < \dots
\]
and corresponding real-valued $L^2(\TT)$-normalised eigenfunctions 
\[ 
K_0^q,K_1^q,K_2^q,K_3^q,K_4^q,\dots,
\]
where $K_j^q$ has 2$\lceil j/2 \rceil$ zeros for all $j \in \NN$.

Since $\opML_q$ is invariant under the shift $\phi \mapsto \phi + \pi$, we can decompose $L^2(\TT)$ into $\pi$-periodic and $\pi$-antiperiodic functions and reconstruct the spectral theory of $\opML_q$ on $L^2(\TT)$ from that of the restrictions of $\opML_q$ to the two subspaces. By identifying $\pi$-periodic and $\pi$-antiperiodic functions on $\TT$ with functions on $[0,\pi]$ satisying suitable boundary conditions (see \cite[Chapter 8, eqs.\ (3.1) and (3.2)]{coddington_theory_1955}) and applying again \cite[Chapter 8, Theorem 3.1]{coddington_theory_1955}, we conclude that $K_j^q$ is $\pi$-periodic or $\pi$-antiperiodic according to whether $\lceil j/2 \rceil$ is even or odd.

Invariance of $\opML_q$ under the inversion $\phi \mapsto -\phi$ leads instead to the decomposition of $L^2(\TT)$ into even and odd functions. It is easily seen that even and odd functions on $\TT$ correspond to functions on $[0,\pi]$ satisfying Neumann and Dirichlet boundary conditions respectively. An application of \cite[Theorem 2.1]{coddington_theory_1955} and a comparison of the number of zeros show that $K_0^q$ is even and that, for all $j \in \NN$, one of $K_{2j+1}^q$ and $K_{2j+2}^q$ is even and the other is odd.

The result follows by combining the above information.
\end{proof}

For all $(i,j) \in \{0,1\}^2$ and $k \in \NN$, let $H_{(i,j),k}^q \in L^2_{(i,j)}(\TT)$ be the real-valued eigenfunction of $\opML_q$ of unit $L^2$ norm, such that either $H_{(i,j),k}^q$ or $\partial_\phi H_{(i,j),k}^q$ is positive at the origin (by uniqueness of solutions to the Cauchy problem for second-order ODEs, no eigenfunction of $\opML_q$ can vanish together with its derivative at any point).

We are now interested in the regularity of eigenvalues and eigenfunctions as functions of the parameter $q$.

\begin{prp}\label{prp:eigen_continuous}
For all $(i,j) \in \{0,1\}^2$ and $k \in \NN$, the map $\RR \ni q \mapsto \lambda_{(i,j),k}^q \in \RR$ is $1$-Lipschitz and increasing, while the map $\RR \ni q \mapsto H_{(i,j),k}^q \in C(\TT)$ is continuous.
Moreover, for all $q \in \RR$,
\begin{equation}\label{eq:eigen_negative_par}
\lambda_{(i,j),k}^{-q} = \lambda^q_{(|i-j|,j),k} -q,
\end{equation}
and, when $q =0$,
\[
\lambda_{(i,j),k}^0 = (2k + i + |i-j|)^2.
\]
\end{prp}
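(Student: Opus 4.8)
The plan is to handle the four assertions by different means, exploiting that $\opML_q = -\partial_\phi^2 + q\sin^2\phi$ depends on $q$ only through the bounded multiplication potential $q\sin^2\phi$.

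\textbf{Monotonicity and the Lipschitz bound.} First I would note that, for $q < q'$, the difference $\opML_{q'} - \opML_q = (q'-q)\sin^2\phi$ satisfies the operator inequality $0 \le \opML_{q'} - \opML_q \le (q'-q)\,\mathrm{Id}$, since $0 \le \sin^2\phi \le 1$. As $\opML_q$ preserves each $L^2_{(i,j)}(\TT)$ and has compact resolvent there, its eigenvalues obey the Courant--Fischer formula $\lambda_{(i,j),k}^q = \min_V \max_{f \in V,\,\|f\|=1} \langle \opML_q f, f\rangle$, the minimum being over $(k+1)$-dimensional subspaces $V$ of $L^2_{(i,j)}(\TT)$, with form domain (the relevant $H^1$) independent of $q$ because the potential is bounded. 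Feeding the two-sided bound into this characterisation gives $0 \le \lambda_{(i,j),k}^{q'} - \lambda_{(i,j),k}^q \le q'-q$, which is precisely monotonicity together with the $1$-Lipschitz property.

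\textbf{Continuity of the eigenfunctions.} Here I would pass to the defining ODE $-u'' + (q\sin^2\phi - \lambda)\,u = 0$. Fixing $(i,j)$ and $k$, let $u^q$ be its solution with $\lambda = \lambda_{(i,j),k}^q$ and Cauchy data at $\phi=0$ dictated by the parity index $i$ (namely $u^q(0)=1,\ (u^q)'(0)=0$ if $i=0$, and $u^q(0)=0,\ (u^q)'(0)=1$ if $i=1$); this data is admissible because no eigenfunction vanishes together with its derivative, so an even (resp.\ odd) eigenfunction has nonzero value (resp.\ derivative) at $0$. Since the eigenspace in class $(i,j)$ is one-dimensional and spanned by a function of parity $i$, $u^q$ is a nonzero multiple of $H_{(i,j),k}^q$, hence $2\pi$-periodic. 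Using the continuity of $q \mapsto \lambda_{(i,j),k}^q$ just established, continuous dependence of linear ODE solutions on parameters and initial data makes $(\phi,q)\mapsto u^q(\phi)$ jointly continuous, so $q\mapsto u^q$ is continuous into $C(\TT)$. The chosen data already enforces the sign normalisation of $H_{(i,j),k}^q$, so dividing by the continuous, strictly positive factor $\|u^q\|_{L^2}$ yields $H_{(i,j),k}^q = u^q/\|u^q\|_{L^2}$ and the desired continuity in $C(\TT)$. I expect \emph{this} to be the main obstacle, as it is the only point where one must leave the $L^2$-framework and control the normalisation and sign conventions uniformly in $q$.

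\textbf{The reflected parameter.} For \eqref{eq:eigen_negative_par} I would use $\sin^2\phi = 1 - \cos^2\phi = 1 - \sin^2(\phi+\pi/2)$. Let $S$ be the unitary translation $Sf(\phi)=f(\phi+\pi/2)$ on $L^2(\TT)$. A direct computation gives $S\,\opML_{-q}\,S^{-1} = \opML_q - q$, while $S$ maps $L^2_{(i,j)}(\TT)$ onto $L^2_{(|i-j|,j)}(\TT)$ (the shift preserves the $\pi$-(anti)periodicity index $j$ and sends the parity index $i$ to $i+j\equiv|i-j|\pmod 2$). Thus $\opML_{-q}|_{L^2_{(i,j)}}$ is unitarily equivalent to $(\opML_q - q)|_{L^2_{(|i-j|,j)}}$, and comparing the ordered lists of simple eigenvalues yields $\lambda_{(i,j),k}^{-q} = \lambda_{(|i-j|,j),k}^q - q$. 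Finally, for $q=0$ the operator $\opML_0 = -\partial_\phi^2$ is diagonalised by $1,\cos(n\phi),\sin(n\phi)$ with eigenvalue $n^2$; since $\cos(n\phi)$ and $\sin(n\phi)$ have $2n$ zeros on $\TT$, Proposition \ref{prp:discrete_spectrum} forces the class-$(i,j)$, index-$k$ eigenfunction to have frequency $n = 2k+i+|i-j|$, giving $\lambda_{(i,j),k}^0 = (2k+i+|i-j|)^2$ (equivalently, sorting $\cos(n\phi),\sin(n\phi)$ by parity of $n$ and by even/odd symmetry gives the same count directly).
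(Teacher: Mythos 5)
Your proposal is correct and follows essentially the same route as the paper: a variational (min--max) characterisation plus the operator inequality $0 \le \opML_{q'}-\opML_q \le (q'-q)\,\mathrm{Id}$ for monotonicity and the Lipschitz bound; solving the Cauchy problem with parity-adapted initial data and invoking continuous (in the paper, analytic, via Cauchy--Kowalevski) dependence on parameters, then normalising, for continuity of $q \mapsto H_{(i,j),k}^q$; the shift $\phi \mapsto \phi+\pi/2$ conjugating $\opML_{-q}$ to $\opML_q - q$ for \eqref{eq:eigen_negative_par}; and explicit diagonalisation of $-\partial_\phi^2$ at $q=0$. The only differences are cosmetic (Courant--Fischer min--max instead of the paper's max--min form, and your extra zero-counting detail at $q=0$, which the paper leaves implicit).
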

\begin{proof}
Note that, by the minimax principle,
\begin{equation}\label{eq:minimax}
\begin{split}
\lambda_{(i,j),k}^q &= \min_{\substack{f \in L^2_{(i,j)}(\TT) \\ \|f\|_2 = 1, \, f\perp H_{(i,j),0}^q,\dots,H_{(i,j),k-1}^q}} \langle \opML_q f,f \rangle \\
&= \max_{g_1,\dots,g_k \in L^2_{(i,j)(\TT)}} \min_{\substack{f \in L^2_{(i,j)}(\TT) \\ \|f\|_2 = 1, \, f\perp g_1,\dots,g_k}} \langle \opML_q f,f \rangle.
\end{split}
\end{equation}
The second expression for $\lambda_{(i,j),k}^q$, together with the fact that $\opML_q - \opML_{q'} \geq 0$ whenever $q \geq q'$, immediately implies that $\lambda_{(i,j),k}^q$ is an increasing function of $q$. Similarly, since $\opML_q - \opML_{q'} = (q-q') \sin^2 \phi$ is a bounded operator on $L^2(\TT)$ for all $q,q' \in \RR$, with norm bounded by $|q-q'|$, from \eqref{eq:minimax} we deduce that
\begin{equation}\label{eq:eigen_lipschitz}
|\lambda_k^q - \lambda_k^{q'}| \leq |q-q'|,
\end{equation}
i.e., $q \mapsto \lambda_k^q$ is $1$-Lipschitz.

For all $q,\lambda \in \RR$, let $\Phi(q,\lambda,\cdot)$ denote the solution to the Cauchy problem
\[
\begin{cases}
(-\partial_t^2 +q\sin^2 t - \lambda) \Phi(q,\lambda,t) = 0 \\
\Phi(q,\lambda,0) = 1-i\\
\partial_t \Phi(q,\lambda,0) = i
\end{cases}
\]
on $\RR$.
Note that, by the Cauchy--Kowalevski theorem \cite[Theorem (1.25)]{folland_introduction_1995}, $\Phi(q,\lambda,t)$ 
is an analytic function of $(q,\lambda,t) \in \RR^3$. Note moreover that, for all $k \in \NN$,
\begin{equation}\label{eq:cont_dependence_parameters}
H_{(i,j),k}^q(e^{i\phi}) =
\frac{\Phi(q,\lambda_{(i,j),k}^q,\phi)}{\left( \int_0^1 \Phi(q,\lambda_{(i,j),k}^q,2\pi t)^2 \,dt \right)^{1/2}}.
\end{equation}
From the continuity of $q \mapsto \lambda_{(i,j),k}^q$ we then deduce immediately the continuity of $q \mapsto H_{(i,j),k}^q$.

Note now that
\begin{equation}\label{eq:op_negative_par}
\opML_{-q} = -\partial_\phi^2 - q \sin^2 \phi = -\partial_\phi^2 + q \cos^2 \phi - q,
\end{equation}
which shows that the shift $\phi \mapsto \phi+\pi/2$ maps $\opML_{-q}$ into $\opML_{q} -q$. Considerations on the behaviour of parity and $\pi$-periodicity under this shift immediately yield \eqref{eq:eigen_negative_par}.

Finally, the value of $\lambda_{(i,j),k}^0$ is easily determined since $\opML_0 = -\partial_\phi^2$, whose eigenfunctions and eigenvalues in $L^2(\TT)$ are well known.
\end{proof}

Eigenvalues and eigenfunctions are actually smooth functions of the parameter $q$, as we now show.

In this and the following results we will work with a fixed parity/periodicity $(i,j) \in \{0,1\}$, so in their proofs we will drop $(i,j)$ from the notation and just write $\lambda_{k}^q$ and $H_{k}^q$ instead of $\lambda_{(i,j),k}^q$ and $H_{(i,j),k}^q$.

\begin{prp}\label{prp:eigen_smooth}
The maps $\RR \ni q \mapsto \lambda_{(i,j),k}^q \in \RR$ and $\RR \ni q \mapsto H_{(i,j),k}^q \in C(\TT)$ are infinitely differentiable. Moreover
\begin{equation}\label{eq:derivative_eigen_k}
\partial_q \lambda_{(i,j),k}^q = \int_{\TT} \sin^2 \phi \, (H_{(i,j),k}^q(e^{i\phi}))^2 \,d\phi
\end{equation}
and the latter expression is decreasing in $q$ when $k =0$.
\end{prp}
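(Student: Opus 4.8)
The plan is to establish the three assertions—smoothness, the derivative formula \eqref{eq:derivative_eigen_k}, and the monotonicity for $k=0$—in turn, relying throughout on the simplicity of the eigenvalues established in Proposition \ref{prp:discrete_spectrum}. For smoothness I would argue as follows. The family $q \mapsto \opML_q = -\partial_\phi^2 + q\sin^2\phi$ is an affine family of self-adjoint operators on $L^2_{(i,j)}(\TT)$ with common domain $H^2(\TT)\cap L^2_{(i,j)}(\TT)$; in the terminology of analytic perturbation theory it is a self-adjoint holomorphic family of type (A). Since each eigenvalue $\lambda_{(i,j),k}^q$ is simple and isolated, the standard Kato--Rellich theory yields that, locally in $q$, both the eigenvalue and a suitably normalised eigenvector depend real-analytically on $q$, the eigenvector being analytic as a map into the graph-norm space $H^2(\TT)$; in particular $q \mapsto \lambda_{(i,j),k}^q$ and $q \mapsto H_{(i,j),k}^q$ are $C^\infty$ (the latter into $C(\TT)$, indeed into $C^\infty(\TT)$ by regularity of the ODE). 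Alternatively, and more in keeping with the self-contained spirit of this appendix, one may exploit the analytic solution $\Phi(q,\lambda,\cdot)$ already introduced: the eigenvalues in the sector $(i,j)$ are the zeros of an analytic characteristic function $D_{(i,j)}(q,\lambda)$ assembled from $\Phi$ and $\partial_t\Phi$ evaluated at the endpoints encoding the parity and (anti)periodicity conditions, and simplicity of the eigenvalue translates into $\partial_\lambda D_{(i,j)}(q,\lambda_{(i,j),k}^q)\neq 0$; the analytic implicit function theorem then gives the smoothness of $q\mapsto\lambda_{(i,j),k}^q$, and \eqref{eq:cont_dependence_parameters} transfers it to the eigenfunctions.

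Given smoothness, formula \eqref{eq:derivative_eigen_k} is a Feynman--Hellmann computation. Differentiating the eigenvalue equation $\opML_q H_{(i,j),k}^q = \lambda_{(i,j),k}^q H_{(i,j),k}^q$ in $q$ (legitimate since $\partial_q H_{(i,j),k}^q$ lies in the domain of $\opML_q$) and pairing with $H_{(i,j),k}^q$ in $L^2(\TT)$, the two terms involving $\partial_q H_{(i,j),k}^q$ cancel by self-adjointness of $\opML_q$, leaving
\[
\partial_q \lambda_{(i,j),k}^q = \langle (\partial_q\opML_q) H_{(i,j),k}^q, H_{(i,j),k}^q\rangle = \int_\TT \sin^2\phi\,(H_{(i,j),k}^q(e^{i\phi}))^2\,d\phi,
\]
where we used that $\partial_q\opML_q = \sin^2\phi$ acts as multiplication, that $\|H_{(i,j),k}^q\|_2=1$, and that $H_{(i,j),k}^q$ is real-valued.

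Finally, for the monotonicity when $k=0$ I would bypass the formula and argue by concavity. Within the invariant subspace $L^2_{(i,j)}(\TT)$ the lowest eigenvalue is given by the variational principle
\[
\lambda_{(i,j),0}^q = \min_{\substack{f\in L^2_{(i,j)}(\TT)\\ \|f\|_2=1}} \langle \opML_q f,f\rangle = \min_{\substack{f\in L^2_{(i,j)}(\TT)\\ \|f\|_2=1}} \left( \|\partial_\phi f\|_2^2 + q\int_\TT \sin^2\phi\,|f(e^{i\phi})|^2\,d\phi\right).
\]
For each fixed $f$ the quantity in parentheses is affine in $q$, so $\lambda_{(i,j),0}^q$, being a pointwise infimum of affine functions of $q$, is concave. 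Hence its derivative $\partial_q\lambda_{(i,j),0}^q$ is decreasing, which by \eqref{eq:derivative_eigen_k} is exactly the asserted monotonicity of the expression $\int_\TT \sin^2\phi\,(H_{(i,j),0}^q(e^{i\phi}))^2\,d\phi$.

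The main obstacle I expect is the smoothness step, specifically making rigorous that simplicity of the eigenvalue yields differentiability of the \emph{eigenvector} in a norm strong enough (the graph/$H^2$ norm) to justify the Feynman--Hellmann differentiation; the perturbation-theoretic route handles this cleanly, whereas the self-contained $\Phi$-based route additionally requires identifying simplicity of the eigenvalue with a simple zero of the characteristic function $D_{(i,j)}$. The concavity step is comparatively soft, but it is worth stressing that it genuinely uses $k=0$: for $k>0$ the minimax value is a minimum constrained to a shrinking orthogonal complement (equivalently a max-min), which need not be concave in $q$, so the monotonicity of the derivative cannot be expected for the excited eigenvalues.
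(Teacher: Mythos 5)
Your proposal is correct, but it takes a genuinely different route from the paper's. The paper stays entirely inside the variational framework already set up for Proposition \ref{prp:eigen_continuous}: using the eigenfunction at parameter $q$, projected off the lower eigenspaces at $q'$, as a test function in the minimax characterisation, it derives the two-sided difference-quotient bound \eqref{eq:lip_eigen}, and this single inequality does triple duty. It yields differentiability of $q\mapsto\lambda_{(i,j),k}^q$ together with formula \eqref{eq:derivative_eigen_k} by induction on $k$ (the inductive hypothesis being smoothness of the lower eigenfunctions, needed to kill the projection terms in the limit $q'\to q$); for $k=0$, where the projection terms are absent, it exhibits the sandwich $\int_\TT\sin^2\phi\,(H_{(i,j),0}^{q'})^2\,d\phi\le(\lambda_{(i,j),0}^{q'}-\lambda_{(i,j),0}^{q})/(q'-q)\le\int_\TT\sin^2\phi\,(H_{(i,j),0}^{q})^2\,d\phi$, which is precisely the asserted monotonicity; and higher-order smoothness of both eigenvalues and eigenfunctions then follows by bootstrapping \eqref{eq:cont_dependence_parameters} (analyticity of the Cauchy solution $\Phi$ in $(q,\lambda)$) against \eqref{eq:derivative_eigen_k}. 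You instead get smoothness first and abstractly — Kato--Rellich theory for self-adjoint holomorphic families of type (A), or alternatively the implicit function theorem applied to a characteristic determinant built from $\Phi$ — then recover \eqref{eq:derivative_eigen_k} by Feynman--Hellmann, and the monotonicity by concavity of an infimum of affine functions. All three steps are sound: the type (A) property is immediate since the domain $H^2(\TT)\cap L^2_{(i,j)}(\TT)$ is $q$-independent and $q\mapsto\opML_q u$ is affine, simplicity is Proposition \ref{prp:discrete_spectrum}, and your concavity argument is exactly the abstract form of the paper's $k=0$ sandwich (the bottom eigenvalue of each sector is an infimum of affine functions of $q$). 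What your route buys: real-analyticity rather than mere $C^\infty$, no induction on $k$, and a transparent explanation of why $k=0$ is special, which you correctly articulate. What the paper's route buys: it is self-contained, as the appendix advertises, using nothing beyond minimax and ODE analyticity already in play; your primary route imports Kato's perturbation theory wholesale, and your self-contained alternative still owes the standard Sturm--Liouville lemma identifying simple eigenvalues with simple zeros of the characteristic function (a Wronskian identity), plus the minor check — worth writing down — that the paper's sign normalisation of $H_{(i,j),k}^q$ (positivity of the value, respectively of the derivative, at the origin according to parity) is stable under the locally analytic choice of eigenvector, so that the globally defined normalised eigenfunction is itself smooth in $q$.
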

\begin{proof}
From \eqref{eq:minimax} we deduce that, for all $q,q' \in \RR$,
\[\begin{split}
\lambda_k^{q'} &\leq \frac{\left\langle \opML_{q'} (H_k^q - \sum_{j=0}^{k-1} \langle H_k^q,H_j^{q'} \rangle H_j^{q'} ), H_k^q - \sum_{j=0}^{k-1} \langle H_k^q,H_j^{q'} \rangle H_j^{q'} \right\rangle}{\left\| H_k^q - \sum_{j=0}^{k-1} \langle H_k^q,H_j^{q'} \rangle H_j^{q'}\right\|^2} \\
&= \frac{\left\langle \opML_{q'} H_k^q , H_k^q - \sum_{j=0}^{k-1} \langle H_k^q,H_j^{q'} \rangle H_j^{q'} \right\rangle}{1 - \sum_{j=0}^{k-1} \langle H_k^q,H_j^{q'} \rangle^2} \\
&= \frac{\lambda_k^q + \left\langle (\opML_{q'}-\opML_q) H_k^q , H_k^q \right\rangle - \sum_{j=0}^{k-1} \lambda_j^{q'} \langle H_k^q,H_j^{q'} \rangle^2 }{1 - \sum_{j=0}^{k-1} \langle H_k^q,H_j^{q'} \rangle^2} ,
\end{split}\]
which gives
\[
\lambda_k^{q'}-\lambda_k^q \leq \sum_{j=0}^{k-1} (\lambda_k^{q'} - \lambda_j^{q'}) \langle H_k^q,H_j^{q'}-H_j^q \rangle^2 + (q'-q) \int_\TT \sin^2 \phi \, H_k^q(\phi)^2 \,d\phi
\]
and, for $q < q'$,
\begin{multline}\label{eq:lip_eigen}
\sum_{j=0}^{k-1} (\lambda_k^{q} - \lambda_j^{q}) \frac{\langle H_k^{q'},H_j^{q}-H_j^{q'} \rangle^2}{q-q'} + \int_\TT \sin^2 \phi \, H_k^{q'}(\phi)^2 \,d\phi \leq \frac{\lambda_k^{q'}-\lambda_k^q}{q'-q} \\
\leq \sum_{j=0}^{k-1} (\lambda_k^{q'} - \lambda_j^{q'}) \frac{\langle H_k^q,H_j^{q'}-H_j^q \rangle^2}{q'-q} + \int_\TT \sin^2 \phi \, H_k^q(\phi)^2 \,d\phi.
\end{multline}
Note that, in the particular case where $k=0$, the sums on $j$ disappear and these inequalities imply that $q \mapsto \int_\TT \sin^2 \phi \, H_0^q(\phi)^2 \,d\phi$ is decreasing.

We can now proceed with an inductive argument on $k$ to prove smoothness of $q \mapsto H_k^q$ and $q \mapsto \lambda_k ^q$. Indeed assume that $q \mapsto H_j^q$ and $q \mapsto \lambda_j ^q$ are infinitely differentiable for $j=0,\dots,k-1$. Then
\[
\frac{\langle H_k^q,H_j^{q'}-H_j^q \rangle^2}{q'-q} = \langle H_k^q,H_j^{q'}-H_j^q \rangle \left \langle H_k^q,\frac{H_j^{q'}-H_j^q}{q'-q} \right\rangle,
\]
which tends to $0$ as $q' \searrow q$ and as $q \nearrow q'$.  
If we take the corresponding limits in \eqref{eq:lip_eigen}, we obtain that the map $q \mapsto \lambda_k^q$ is differentiable, with derivative given by \eqref{eq:derivative_eigen_k}.
A repeated application of \eqref{eq:cont_dependence_parameters} and \eqref{eq:derivative_eigen_k} inductively yields that $q \mapsto \lambda_k^q$ and $q \mapsto H_k^q$ are infinitely differentiable.
\end{proof}

We are now interested in the asymptotic behaviour as $q \to \infty$. 
The underlying idea to obtain such asymptotic results is the fact that, through a suitable rescaling, the eigenvalue equation for the Mathieu operator tends to the one for the Hermite operator $-\partial_t^2 + t^2$ on $\RR$.

We start with a preliminary estimate, whose proof follows \cite{ince_mathieu_1927}.

\begin{lem}
For all $(i,j) \in \{0,1\}^2$ and $k \in \NN$,
\begin{equation}\label{eq:asymp_eigen_est}
\limsup_{q \to \infty} \frac{\lambda_{(i,j),k}^q}{q^{1/2}} \leq 2(2k+i)+5.
\end{equation}
\end{lem}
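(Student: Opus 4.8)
The plan is to obtain the upper bound from the minimax characterisation \eqref{eq:minimax}: it suffices to produce, for each large $q$, a $(k+1)$-dimensional subspace $V \subseteq L^2_{(i,j)}(\TT)$ on which the Rayleigh quotient $\langle \opML_q f, f\rangle/\|f\|_2^2$ stays below $q^{1/2}\,(2(2k+i)+5)$ up to lower-order terms. I would build $V$ from trial functions adapted to the harmonic-oscillator approximation of $\opML_q$ near the two minima $\phi=0$ and $\phi=\pi$ of the potential $q\sin^2\phi$ on $\TT$. The key point is that near a minimum $\sin^2\phi = \phi^2 + O(\phi^4)$, so after the rescaling $\phi = q^{-1/4} t$ the operator $\opML_q$ becomes $q^{1/2}(-\partial_t^2 + t^2)$ plus lower-order corrections, i.e.\ $q^{1/2}$ times the Hermite operator anticipated in the text, whose eigenfunctions $h_n$ satisfy $(-\partial_t^2+t^2)h_n = (2n+1)h_n$ and have parity $(-1)^n$.

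For the symmetry bookkeeping I would, for $\ell = 0,\dots,k$, set $n_\ell = i + 2\ell$, so that $h_{n_\ell}$ has the correct parity $(-1)^i$. To each $n_\ell$ I associate a cut-off rescaled copy $u^0$ of $h_{n_\ell}$ localised at $\phi = 0$ together with its translate $u^\pi$ at $\phi = \pi$, and I form the combination $u^0 + (-1)^j u^\pi$. A direct check shows that this combination is even or odd according to $i$ and $\pi$-periodic or $\pi$-antiperiodic according to $j$, hence genuinely lies in $L^2_{(i,j)}(\TT)$; I let $V$ be the span of these $k+1$ functions. On each such trial function the kinetic and potential contributions reproduce $q^{1/2}(2n_\ell + 1)$ to leading order, the largest being $q^{1/2}(2n_k+1) = q^{1/2}(4k+2i+1)$.

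The errors I must control are of three kinds: the discrepancy $\sin^2\phi - \phi^2$ away from the wells; the cut-off, whose derivative only meets the Gaussian tail of width $q^{-1/4}$ and so contributes terms that are exponentially small in $q^{1/2}$; and the off-diagonal entries of the quadratic form and of the Gram matrix of $V$, which are again exponentially small because the two wells are well separated and the $h_{n_\ell}$ are mutually orthogonal. Bounding all of these crudely, the largest generalised eigenvalue of the quadratic form on $V$ is at most $q^{1/2}(4k+2i+1)$ plus lower-order terms, which is comfortably below $q^{1/2}\,(2(2k+i)+5)$; the slack of $+4$ is exactly what allows the error estimates to be rough. Combining with \eqref{eq:minimax} and dividing by $q^{1/2}$ yields \eqref{eq:asymp_eigen_est}.

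The main obstacle is the passage from the heuristic ``the maximal Rayleigh quotient over $V$ is essentially its top diagonal entry'' to a rigorous inequality: one must keep $V$ honestly $(k+1)$-dimensional inside the prescribed symmetry class while showing that the cross terms (well separation and Hermite orthogonality) and the cut-off tails do not spoil the bound. This is precisely the computational heart of the argument, and it is where the more explicit treatment of \cite{ince_mathieu_1927} is convenient; the generous constant $2(2k+i)+5$ is what makes these estimates painless.
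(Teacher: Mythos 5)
Your variational argument is sound, but it takes a genuinely different route from the paper. The paper follows Ince \cite{ince_mathieu_1927} and argues by oscillation theory rather than by quadratic forms: it rescales the \emph{actual} eigenfunction, setting $U_k^q(t) = \pi^{-1/2}q^{-1/8}H_{(i,j),k}^q(e^{it/q^{1/4}})$ on $I_q = (-q^{1/4}\pi/2,\, q^{1/4}\pi/2)$, which solves $-\partial_t^2 U + q^{1/2}\sin^2(t/q^{1/4})\, U = (\lambda_{(i,j),k}^q/q^{1/2})\, U$ and has exactly $2k+i$ zeros in $I_q$ by Proposition \ref{prp:discrete_spectrum}; it then takes the largest odd number $2m+1$ strictly below $\lambda_{(i,j),k}^q/q^{1/2}$ and applies the Sturm comparison theorem \cite[Chapter 8, Theorem 1.1]{coddington_theory_1955} against the Hermite equation --- using the same inequality $q^{1/2}\sin^2(t/q^{1/4}) \le t^2$ that you exploit, but as a comparison of oscillation rates rather than of potential energies. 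Since $h_m$ has $m$ zeros, all lying in $(-\sqrt{2m+1},\sqrt{2m+1}) \subseteq I_q$, and between consecutive zeros of $h_m$ there must be a zero of $U_k^q$, one gets $m-1 \le 2k+i$, hence $\lambda_{(i,j),k}^q/q^{1/2} \le 2m+3 \le 2(2k+i)+5$; the ``$+5$'' is exactly the loss inherent in zero-counting. Your route instead builds a two-well trial subspace and invokes \eqref{eq:minimax}; note that \eqref{eq:minimax} is stated as a max--min, so you need the standard Courant--Fischer step (intersect your $(k+1)$-dimensional $V$ with the orthogonal complement of any $g_1,\dots,g_k$) to turn it into an upper bound, but this is routine. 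Your symmetry bookkeeping is correct, the potential error even has a favourable sign since $\sin^2\phi \le \phi^2$ and $\sin^2\phi \le (\phi-\pi)^2$ at the respective wells, and the cutoff and Gram-matrix errors are exponentially small in $q^{1/2}$, so the argument goes through. What your approach buys, at the cost of this bookkeeping, is the sharp constant: carried out, it gives $\limsup_{q\to\infty} \lambda_{(i,j),k}^q/q^{1/2} \le 2(2k+i)+1$, which by Proposition \ref{prp:eigen_asymp} is the actual limit, so the slack of $+4$ you budget for is not in fact needed. What the paper's approach buys is brevity: it recycles the zero count already established in Proposition \ref{prp:discrete_spectrum}, needs no trial functions or error estimates at all, and the non-sharp constant suffices for every later use of the lemma.
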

\begin{proof}
Let $q > 0$. Define a function $U_k^q : \RR \to \CC$ by
\begin{equation}\label{eq:rescaled_solution}
U_k^q(t) = \begin{cases}
\pi^{-1/2} q^{-1/8} H_k^q(e^{it/q^{1/4}}) &\text{if $t \in (-q^{1/4}\pi/2,q^{1/4}\pi/2)$,}\\
0 &\text{otherwise.}
\end{cases}
\end{equation}
Then on the interval $I_q = (-q^{1/4}\pi/2,q^{1/4}\pi/2)$ the function $U_k^q$ is smooth and satisfies
\begin{equation}\label{eq:rescaled_diffeq}
-\partial_t^2 U_k^q(t) + q^{1/2} \sin^2(t/q^{1/4}) U_k^q(t) = q^{-1/2} \lambda_k^q U_k^q(t).
\end{equation}
Moreover the number of zeros of $U_k^q$ in $I_q$ is the same as the number of zeros of $\phi \mapsto H_k^q(e^{i\phi})$ in $(-\pi/2,\pi/2)$, that is, $2k+i$ (see Proposition \ref{prp:discrete_spectrum}).

Note that, by \eqref{eq:eigen_lipschitz}, $\lambda_k^q \leq q + \lambda_k^0$ for all $q > 0$, hence
there exists $\bar q_k \in \Rpos$ such that
\begin{equation}\label{eq:interval_comparison}
\lambda_k^q/q^{1/2} < (q^{1/4} \pi/2)^2
\end{equation}
for all $q > \bar q_k$. In order to conclude, it will be enough to show that $\lambda_k^q/q^{1/2} \leq 2(2k+i)+5$ for all $q > \bar q_k$.

Let $q > \bar q_k$. We may also assume that $\lambda_k^q/q^{1/2} > 1$ (otherwise there is nothing to prove). Let $m = \lceil (\lambda_k^q/q^{1/2}-1)/2 \rceil-1$; in other words, $2m+1$ is the greatest odd number strictly less than $\lambda_k^q/q^{1/2}$. Let $h_{m}$ denote the $m$th Hermite function, which satisfies
\begin{equation}\label{eq:hermite_eq}
-\partial_t^2 h_{m}(t) + t^2 h_{m} = (2m + 1) h_{m}.
\end{equation}
Since $q^{1/2} \sin^2(t/q^{1/4}) \leq t^2$ and $\lambda_k^q/q^{1/2} > 2m+1$, we can use the Sturm comparison theorem \cite[Chapter 8, Theorem 1.1]{coddington_theory_1955} to compare zeros of solutions to the differential equations \eqref{eq:rescaled_diffeq} and \eqref{eq:hermite_eq}; in particular, on the interval $I_q$ the function $h_m$ has at most one more zero than $U_k^q$. On the other hand, the Hermite function $h_m$ has exactly $m$ zeros on $\RR$, all of which lie in the interval $(-\sqrt{2m+1},\sqrt{2m+1})$; by \eqref{eq:interval_comparison}, this interval is contained in $I_q$ and therefore
\[
m-1 \leq 2k+i;
\]
hence
\[
\lambda_k^q / q^{1/2} \leq 2m+3 \leq 2(2k+i)+5
\]
and we are done.
\end{proof}

We now obtain precise asymptotic information as $q \to \infty$ for the eigenvalues and their $q$-derivative. The asymptotic \eqref{eq:asymp_eigen} can be found elsewhere in the literature (see, e.g., \cite[\S 2.331]{meixner_mathieusche_1954} and \cite[\S 5.2.1]{arscott_periodic_1964}).

\begin{prp}\label{prp:eigen_asymp}
For all $(i,j) \in \{0,1\}^2$ and $k \in \NN$,
\begin{equation}\label{eq:asymp_eigen}
\lim_{q \to \infty} \frac{\lambda_{(i,j),k}^q}{q^{1/2}} = 2(2k+i)+1
\end{equation}
and
\begin{equation}\label{eq:asymp_der_eigen}
\lim_{q \to \infty} \frac{q \, \partial_q \lambda_{(i,j),k}^q}{\lambda_{(i,j),k}^q} = \frac{1}{2}.
\end{equation}
\end{prp}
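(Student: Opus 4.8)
The plan is to establish both asymptotics from a single convergence statement: after the rescaling \eqref{eq:rescaled_solution}, the functions $U_k^q$ converge as $q \to \infty$ to the Hermite function $h_n$ (up to a fixed normalising constant and sign), where I abbreviate $n = 2k+i$, $\lambda_k^q = \lambda_{(i,j),k}^q$ and $H_k^q = H_{(i,j),k}^q$. The key preliminary is a uniform decay estimate. For $t \in I_q$ the bound $|\sin x| \geq (2/\pi)|x|$ on $[-\pi/2,\pi/2]$ gives $q^{1/2}\sin^2(t/q^{1/4}) \geq (2/\pi)^2 t^2$, while by \eqref{eq:asymp_eigen_est} the rescaled eigenvalue $\mu_q := q^{-1/2}\lambda_k^q$ stays bounded (and is nonnegative) as $q \to \infty$. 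Hence outside a fixed radius $R$, independent of $q$, the potential in \eqref{eq:rescaled_diffeq} dominates $\mu_q$, so an Agmon-type argument in the classically forbidden region $\{|t| > R\}$ yields a Gaussian-type bound on $U_k^q$ that is uniform in $q$. In particular the family $\{U_k^q\}$ is tight and satisfies $\sup_q \int_{I_q}(1+t^2)|U_k^q(t)|^2\,dt < \infty$ with uniformly small tails.

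With these a priori bounds I would argue by compactness. A direct computation shows that $\|U_k^q\|_{L^2(I_q)}^2$ equals a constant independent of $q$; combining this with the equation \eqref{eq:rescaled_diffeq} and interior ODE estimates allows one to extract, from any sequence $q_j \to \infty$, a subsequence along which $\mu_{q_j} \to \mu_\infty$ and $U_k^{q_j} \to V$ locally uniformly together with derivatives. Since $q^{1/2}\sin^2(t/q^{1/4}) \to t^2$ uniformly on compact sets, $V$ solves the Hermite equation $-V'' + t^2 V = \mu_\infty V$; tightness guarantees $V \not\equiv 0$, so $\mu_\infty = 2m+1$ and $V$ is a multiple of $h_m$ for some $m \in \NN$. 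The parity of $V$ in $t$ is inherited from the index $i$, and the number of zeros of $U_k^q$ in $I_q$ is exactly $n$ (as recorded in the proof of \eqref{eq:asymp_eigen_est}); a Sturm/Prüfer-angle argument, using that $V$ is a nontrivial Hermite function, shows that no zeros are lost or created in the limit, so $V$ has exactly $n$ zeros and therefore $m = n$, $\mu_\infty = 2n+1$. As every subsequential limit of $\mu_q$ equals $2n+1$, the full sequence converges, which is \eqref{eq:asymp_eigen}; the same compactness shows that every subsequential limit of $U_k^q$ is $\pm c\,h_n$ for one fixed constant $c$, so $(U_k^q)^2 \to c^2 h_n^2$ in the weighted $L^2$ sense.

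For \eqref{eq:asymp_der_eigen} I would start from the exact formula \eqref{eq:derivative_eigen_k} and rewrite the integral in the variable $t = q^{1/4}\phi$, splitting $\TT$ into the two wells near $\phi = 0$ and $\phi = \pi$; by the $\pi$-(anti)periodicity of $H_k^q$ the two wells contribute equally. On the well near $0$ the change of variables turns $\int \sin^2\phi\,(H_k^q)^2\,d\phi$ into $\pi\int_{I_q}\sin^2(t/q^{1/4})\,(U_k^q)^2\,dt$, and writing $\sin^2(t/q^{1/4}) = q^{-1/2}[q^{1/2}\sin^2(t/q^{1/4})]$ with $q^{1/2}\sin^2(t/q^{1/4}) \to t^2$ lets me pass to the limit: the uniform tail bound justifies dominated convergence of the weighted integral, so $q^{1/2}\int_{I_q}\sin^2(t/q^{1/4})(U_k^q)^2\,dt \to c^2\int_\RR t^2 h_n(t)^2\,dt$. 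The harmonic-oscillator virial identity gives $\int_\RR t^2 h_n^2 = (2n+1)/2$, and tracking the normalising constant one obtains $\partial_q\lambda_k^q \sim \tfrac12 (2n+1)\,q^{-1/2}$; combined with \eqref{eq:asymp_eigen} this yields $q\,\partial_q\lambda_k^q/\lambda_k^q \to \tfrac12$.

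The main obstacle I anticipate is the uniform tightness/tail estimate: it is what prevents mass from escaping to infinity, hence what both guarantees that the subsequential limit $V$ is a \emph{nonzero} Hermite function and legitimises passing the $t^2$-weighted integral to the limit in the derivative asymptotic. Establishing it sharply, e.g.\ via Agmon estimates adapted to the $q$-dependent interval $I_q$ and relying on the global lower bound $q^{1/2}\sin^2(t/q^{1/4}) \geq (2/\pi)^2 t^2$ on $I_q$, is the technical heart; a secondary subtlety is the rigorous preservation of the exact zero count $n$ under the limit, which pins down \emph{which} Hermite function arises and thus the precise constant $2(2k+i)+1$.
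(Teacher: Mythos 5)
Your skeleton is the same as the paper's (rescale via \eqref{eq:rescaled_solution}, extract locally uniform subsequential limits solving the Hermite equation, use tightness to keep the limit nonzero, virial identity for \eqref{eq:asymp_der_eigen}), but the two steps where you depart from it are exactly the two steps you leave as assertions, and as stated both are genuine gaps.

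The first gap is the identification $m=n$ via ``no zeros are lost or created in the limit''. Locally uniform $C^1$ convergence plus simplicity of the zeros of the limiting Hermite function $V$ gives only one inequality: each zero of $V$ attracts a zero of $U_k^{q_\ell}$, hence $m \leq n$. For the reverse inequality you must rule out zeros of $U_k^{q_\ell}$ escaping towards the endpoints of the growing interval $I_q$ or merging, and invoking a ``Sturm/Pr\"ufer-angle argument'' is not yet a proof: you would need to show that all $n$ zeros stay in a fixed compact set (e.g.\ by observing that in the classically forbidden region a solution of \eqref{eq:rescaled_diffeq} is convex where positive and concave where negative, so it admits at most one zero per component, and that the symmetry-induced Dirichlet/Neumann condition at $\phi=\pm\pi/2$ excludes even that one), and that merging contradicts simplicity under $C^1$ convergence. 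Since \eqref{eq:asymp_eigen} rests on $m=n$, the first limit is not yet established. The paper sidesteps zero preservation entirely: it settles for the cheap bound $m \leq 2k'+i$, but runs the compactness argument \emph{simultaneously for all $k'\leq k$}; the limits $U_{k'}$ are orthonormal (by $L^2$ convergence, which tightness provides), have parity $i$, and have strictly increasing eigenvalues lying in $\{2(2m+i)+1 : m\in\NN\}$ with $\lambda_{k'} \leq 2(2k'+i)+1$, so a pigeonhole argument forces equality for every $k'$.

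The second gap is the dominated-convergence step for \eqref{eq:asymp_der_eigen}, which rests on the Agmon-type Gaussian bound you flag but do not prove. The bounds that come for free --- tightness and $\sup_q \int (1+t^2)|U_k^q|^2\,dt < \infty$, both consequences of $q^{1/2}\sin^2(t/q^{1/4}) \geq (2/\pi)^2 t^2$ on $I_q$ together with $q^{1/2}\int_\TT \sin^2\phi\,(H_k^q)^2\,d\phi \leq \lambda_k^q/q^{1/2} \lesssim_k 1$ --- do \emph{not} suffice: a uniformly bounded second moment gives no control of $\sup_q \int_{|t|>R} t^2|U_k^q|^2\,dt$ as $R\to\infty$, which is precisely the uniform integrability that dominated convergence requires. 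So either you genuinely prove the Agmon estimate on the $q$-dependent domain with its boundary conditions (heavy), or you substitute something like a uniform fourth-moment bound $\int t^4 |U_k^q|^2\,dt \lesssim_k 1$ (extractable from $\|\opML_q H_k^q\|_2^2 = (\lambda_k^q)^2$). The paper's device avoids all tail control beyond plain tightness: Fatou's lemma gives $\liminf_q q^{1/2}\int_\TT \sin^2\phi\,(H_k^q)^2\,d\phi \geq \int_\RR t^2 U_k^2\,dt = \lambda_k/2$ and $\liminf_q q^{-1/2}\|\partial_\phi H_k^q\|_2^2 \geq \|\partial_t U_k\|_2^2 = \lambda_k/2$, and the exact energy identity $q^{-1/2}\|\partial_\phi H_k^q\|_2^2 + q^{1/2}\int_\TT \sin^2\phi\,(H_k^q)^2\,d\phi = \lambda_k^q/q^{1/2} \to \lambda_k$ squeezes both liminfs into limits, after which \eqref{eq:derivative_eigen_k} concludes. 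I recommend adopting these two devices; they turn your outline into a complete proof while discarding its two heaviest unproven ingredients.
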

\begin{proof}
Let $q \geq 1$. Define the interval $I_q$ and the function $U_k^q : \RR \to \CC$ as in \eqref{eq:rescaled_solution}.
Then
\[
\|U_k^q\|_{L^2(\RR)} = \|H_k^q\|_{L^2(\TT)} = 1
\]
and
\[
\|\partial_t U_k^q\|_{L^2(I_q)} = q^{-1/4} \|\partial_\phi H_k^q\|_{L^2(\TT)} \leq q^{-1/4} \langle \opML_q H_k^q, H_k^q\rangle^{1/2} = (\lambda_k^q / q^{1/2})^{1/2} \lesssim_k 1
\]
by \eqref{eq:asymp_eigen_est}. Since $I_q \supseteq K_r := \overline{I_r}$ if $q > r$,
 this shows that $\{ U_k^q|_{K_r} \tc q > r\}$ is bounded in $W^{1,2}(K_r)$ for all $r \geq 1$.

Note further that, for all $\epsilon > 0$,
\[\begin{split}
\int_{\{|t| \geq \epsilon^{-1/2}\}} |U_k^q(t)|^2 \,dt &\leq \int_{\{|\sin (t/q^{1/4})| \geq \frac{2}{\pi} \epsilon^{-1/2} q^{-1/4}\} \cap I_q} |U_k^q(t)|^2 \,dt \\
&= \pi \int_{\{|\sin (\phi)| \geq \frac{2}{\pi} \epsilon^{-1/2} q^{-1/4}\}} |H_k^q(e^{i\phi})|^2 \,d\phi \\
&\leq \epsilon \frac{\pi^2}{2} q^{1/2} \int_{\TT} \sin^2 \phi \, |H_k^q(e^{i\phi})|^2 \,d\phi \leq \epsilon \frac{\pi^2}{2} \frac{\lambda_k^q}{q^{1/2}}.
\end{split}\]
Since $\lambda_k^q \lesssim_k q^{1/2}$ by \eqref{eq:asymp_eigen_est}, we conclude that, for all $\epsilon > 0$,
\[
\int_{\{|t| \geq \epsilon^{-1/2}\}} |U_k^q(t)|^2 \,dt \lesssim_k \epsilon,
\]
hence $\{ |U_k^q|^2 : q \geq 1\}$ is tight.

Let $(q_\ell)_\ell$ be any sequence in $[1,\infty)$ with $q_\ell \to \infty$. Since $\lambda_k^q/q^{1/2} \lesssim_k 1$ and the embedding $W^{1,2}(K_r) \subseteq C(K_r)$ is compact for all $r \geq 1$, up to extraction of a subsequence we may assume that $\lambda_k^{q_\ell}/q_\ell^{1/2} \to \lambda_k \in \Rnon$ and $U_k^{q_\ell} \to U_k \in C(\RR)$ uniformly on compacta. Tightness then yields that $U_k^{q_\ell} \to U_k$ in $L^2(\RR)$ as well, and in particular $\|U_k\|_{L^2(\RR)} = 1$. Moreover, since $U_k^q$ satisfies the differential equation \eqref{eq:rescaled_diffeq}, we deduce that $\partial_t^2 U_k^{q_\ell}$ also converges uniformly on compacta, which in turn implies that $\partial_t U_k^{q_\ell}$ converges uniformly on compacta as well; repeated differentiation of the differential equation \eqref{eq:rescaled_diffeq} actually gives that any derivative of $U_k^q$ converges uniformly on compacta. In conclusion $U_k \in C^\infty(\RR)$ and satisfies the limit equation
\[
-\partial_t^2 U_k(t) + t^2 U_k(t) = \lambda_k U_k(t).
\]

Since $\|U_k\|_2 = 1$, $U_k$ and $\lambda_k$ are an eigenfunction and an eigenvalue of the Hermite operator $-\partial_t^2 + t^2$. This implies that $\lambda_k = 2m+1$ for some $m \in \NN$, and moreover $U_k$ has exactly $m$ zeros, which are all simple. Since 
$U_k^{q_\ell} \to U_k$ 
uniformly on compacta, it is easily seen that, for $\ell$ sufficiently large, $U_k^{q_\ell}$ in $I_{q_\ell}$ has at least as many zeros as $U_k$ in $\RR$, i.e., $m \leq 2k+i$ and $\lambda_k \leq 2(2k+i) +1$.

Note now that, up to further extraction of subsequence, we may assume that the same convergence results also hold with $k$ replaced with every $k' \leq k$: in particular, $\lambda_{k'}^{q_\ell} \to \lambda_{k'}$, $U_{k'}^{q_\ell} \to U_{k'}$ in $L^2(\RR)$ and $U_{k'}$ is eigenfunction of the Hermite operator of eigenvalue $\lambda_{k'} \leq 2(2k'+i)+1$. Clearly $\lambda_{k'}^q < \lambda_{k''}^q$ when $k' < k''$, so at the limit $\lambda_{k'} \leq \lambda_{k''}$; on the other hand
\[
\langle U_{k'}^q, U_{k''}^q \rangle_{L^2(\RR)} = \langle H_{k'}^q, H_{k''}^q \rangle_{L^2(\TT)} = 0,
\]
so $\langle U_{k'}, U_{k''} \rangle_{L^2(\RR)} = 0$ and $\lambda_{k'} < \lambda_{k''}$. Note further that, since the $U_{k'}$ have the same parity as the $H_{k'}^q$, the $\lambda_{k'}$ belong to the set $\{2(2m+i) +1 \tc m \in \NN\}$ of eigenvalues of the Hermite operator corresponding to eigenfunctions with appropriate parity. Hence the inequalities $\lambda_{k'} \leq 2(2k'+i)+1$ can only be satisfied when equality holds for all $k' \leq k$, and in particular $\lambda_k = 2(2k+i)+1$.

Since the limit $\lambda_k$ of $\lambda_k^{q_\ell}/q^{1/2}$ does not depend on the subsequence $q_\ell$, we conclude that \eqref{eq:asymp_eigen} holds. Similarly, since there is only one $L^2$-normalised eigenfunction $U_k$ of the Hermite operator with eigenvalue $\lambda_k$ and such that either the function or its derivative is positive at the origin, we conclude that $U_k^q \to U_k$ uniformly on compacta, together with its derivatives, and in $L^2(\RR)$.

Finally
\[
q^{1/2} \int_\TT \sin^2 \phi \, (H_k^q(\phi))^2 \,d\phi = q^{1/2} \int_\RR \sin^2(t/q^{1/4}) \, (U_k^q(t))^2 \, dt ,
\]
hence, by Fatou's lemma,
\[
\liminf_{q \to \infty } q^{1/2} \int_\TT \sin^2 \phi \, (H_k^{q}(\phi))^2 \,d\phi \geq  \int_\RR t^2 \, (U_k(t))^2 \, dt = \frac{\lambda_k}{2} \int_\RR (U_k(t))^2 \, dt = \frac{\lambda_k}{2},
\]
where the virial theorem for the harmonic oscillator was used. 
Similarly, since
\[
q^{-1/2} \|\partial_\phi H_k^q\|_{L^2(\TT)}^2 = \|\partial_t U_k^q\|_{L^2(I_q)}^2
\]
and $\partial_t U_k^{q} \to \partial_t U_k$ uniformly on compacta, we conclude that
\[
\liminf_{q \to \infty} q^{-1/2} \|\partial_\phi H_k^{q}\|_{L^2(\TT)}^2 \geq \|\partial_t U_k\|_{L^2(\RR)}^2 = \frac{\lambda_k}{2} \|U_k\|_{L^2(\RR)}^2 = \frac{\lambda_k}{2}.
\]
On the other hand
\[
q^{-1/2} \|\partial_\phi H_k^q\|_{L^2(\TT)}^2 + q^{1/2} \int_\TT \sin^2 \phi \, (H_k^q(\phi))^2 \,d\phi = q^{-1/2} \langle \opML_q H_k^q, H_k^q \rangle = \frac{\lambda_k^q}{q^{1/2}}
\]
and $\lambda_k^{q}/q^{1/2} \to \lambda_k$, hence
\[
\limsup_{q \to \infty} \, q^{1/2} \int_\TT \sin^2 \phi \, (H_k^q(\phi))^2 \,d\phi \leq \lambda_k - \liminf_{q \to \infty} q^{-1/2} \|\partial_\phi H_k^q\|_{L^2(\TT)}^2 \leq \frac{\lambda_k}{2}.
\]
This shows that
\[
q^{1/2} \int_\TT \sin^2 \phi \, (H_k^{q}(\phi))^2 \,d\phi \to \frac{\lambda_k}{2}
\]
and, since $\lambda_k^{q}/q^{1/2} \to \lambda_k$, by \eqref{eq:derivative_eigen_k} we conclude that
\[
\frac{q \,\partial_q \lambda_k^{q}}{\lambda_k^{q}}= \frac{q}{\lambda_k^{q}} \int_\TT \sin^2 \phi \, (H_k^{q}(\phi))^2 \,d\phi \to \frac{1}{2},
\]
which is \eqref{eq:asymp_der_eigen}.
\end{proof}

The previously obtained information finally allows us to obtain the following estimates for higher-order derivatives.

\begin{prp}\label{prp:eigen_estimates_poly}
For all $(i,j) \in \{0,1\}^2$ and $k,N \in \NN$, there exist $a(N),b(N) \in \NN$ such that
\[
|\partial_q^N \lambda_{(i,j),k}^q| \lesssim_{N,k} (1+|q|)^{a(N)}, \qquad \|\partial_q^N H_{(i,j),k}^q\|_2 \lesssim_{N,k} (1+|q|)^{b(N)}
\]
for all $q \in \RR$.
\end{prp}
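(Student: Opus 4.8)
The plan is to combine the affine dependence of $\opML_q$ on the parameter with a \emph{uniform spectral gap}. As in the previous proofs I fix the parity/periodicity $(i,j)$ and drop it, writing $\lambda_k^q$ and $H_k^q$, and I set $V = \sin^2\phi$, so that $\opML_q = \opML_0 + qV$ with $V$ a bounded multiplication operator of norm $\le 1$ preserving each subspace $L^2_{(i,j)}(\TT)$. The decisive structural fact is that $\partial_q \opML_q = V$ and $\partial_q^m \opML_q = 0$ for $m \ge 2$, so that repeatedly differentiating the eigenvalue equation produces only the fixed bounded operator $V$ together with lower-order eigendata.

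The first and main step is to establish a uniform lower bound for the spectral gap. Let $g_k(q)$ be the distance from $\lambda_k^q$ to the nearest other eigenvalue of $\opML_q$ within the sector, i.e.\ $g_k(q) = \min(\lambda_k^q - \lambda_{k-1}^q,\ \lambda_{k+1}^q - \lambda_k^q)$, with the first term read as $+\infty$ when $k=0$. By Proposition \ref{prp:discrete_spectrum} the eigenvalues in the sector form a strictly increasing sequence, so $g_k(q) > 0$ for every $q$, and by Proposition \ref{prp:eigen_continuous} the $\lambda_k^q$ are continuous (indeed Lipschitz) in $q$, hence so is $g_k$. Finally, the asymptotics \eqref{eq:asymp_eigen} show that consecutive eigenvalues in a sector separate like $4 q^{1/2}$ as $q \to +\infty$, whence $g_k(q) \to \infty$; the symmetry \eqref{eq:eigen_negative_par} reduces the behaviour as $q \to -\infty$ to the same asymptotics in the conjugate sector $(|i-j|,j)$, the additive term $-q$ cancelling in every eigenvalue difference, so $g_k(q) \to \infty$ there as well. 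A continuous, strictly positive function on $\RR$ that tends to $+\infty$ at both ends attains a positive minimum, so $g_k(q) \ge c_k > 0$ for all $q \in \RR$. Consequently the reduced resolvent $R_k^q = (\opML_q - \lambda_k^q)^{-1} P_k^\perp$, where $P_k^\perp$ is the orthogonal projection off $H_k^q$ within the sector, is bounded on $L^2(\TT)$ by $1/c_k$, uniformly in $q$.

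With the uniform gap in hand the estimates follow by induction on $N$. Differentiating $\opML_q H_k^q = \lambda_k^q H_k^q$ exactly $N$ times by the Leibniz rule and using $\partial_q^m\opML_q = 0$ for $m \ge 2$ gives
\[
(\opML_q - \lambda_k^q)\,\partial_q^N H_k^q = \sum_{m=1}^N \binom{N}{m}(\partial_q^m \lambda_k^q)\,\partial_q^{N-m} H_k^q - N\, V\,\partial_q^{N-1} H_k^q.
\]
Pairing with $H_k^q$ and using self-adjointness isolates $\partial_q^N\lambda_k^q$ in terms of $\{\partial_q^m\lambda_k^q,\ \partial_q^j H_k^q : 1\le m \le N-1,\ 0\le j\le N-1\}$ and $V$; applying $R_k^q$ recovers the component of $\partial_q^N H_k^q$ orthogonal to $H_k^q$, while its component along $H_k^q$ is fixed by differentiating $\|H_k^q\|_2 = 1$ exactly $N$ times, again in terms of lower-order data. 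The key observation is that $\lambda_k^q$ itself never enters these formulae except through $R_k^q$, whose norm is controlled by $1/c_k$; hence the unbounded growth of $\lambda_k^q$ is harmless. Starting from $\|H_k^q\|_2 = 1$ and $\partial_q\lambda_k^q = \langle V H_k^q, H_k^q\rangle$ (formula \eqref{eq:derivative_eigen_k}, of modulus $\le 1$), the induction bounds $\|\partial_q^N H_k^q\|_2$ and $|\partial_q^N\lambda_k^q|$ \emph{uniformly} in $q$ for $N \ge 1$ (so one may take $a(N)=b(N)=0$ there); for $N=0$ one has $\|H_k^q\|_2 = 1$ and, by the Lipschitz bound of Proposition \ref{prp:eigen_continuous}, $|\lambda_k^q| \le |q| + \lambda_k^0 \lesssim_k 1 + |q|$, settling the polynomial estimate in all cases.

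I expect the uniform spectral gap to be the only genuine obstacle; everything after it is routine perturbative bookkeeping. A contour-integral (Kato) formulation would be an alternative to the explicit recursion, but since $\lambda_k^q$ drifts to infinity the encircling contour would have to move with $q$, making differentiation under the integral awkward, so direct differentiation of the eigenvalue equation is cleaner. One should also check that all eigendata are genuinely smooth and that $R_k^q$ acts on the smooth vectors involved, but this is guaranteed by Proposition \ref{prp:eigen_smooth} together with the compactness of $\TT$.
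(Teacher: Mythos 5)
Your proposal is correct and follows essentially the same route as the paper's own proof: differentiate the eigenvalue equation $N$ times by Leibniz, determine the component of $\partial_q^N H_{(i,j),k}^q$ along $H_{(i,j),k}^q$ from the differentiated normalization $\|H_{(i,j),k}^q\|_2=1$ and the orthogonal component via the reduced resolvent, whose norm is controlled by the spectral gap coming from Propositions \ref{prp:eigen_continuous} and \ref{prp:eigen_asymp} together with the symmetry \eqref{eq:eigen_negative_par} (the paper reduces to $q\ge 0$ first and uses the gap bound $\inf_{m \neq k}|\lambda_m^q-\lambda_k^q|\gtrsim_k (1+q)^{1/2}\ge 1$, which is exactly your uniform gap), and then induct on $N$. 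Your additional observation that the recursion yields bounds \emph{uniform} in $q$ for $N\ge 1$, so that one may take $a(N)=b(N)=0$ there, is a correct minor sharpening of the stated polynomial estimates.
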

\begin{proof}
In view of \eqref{eq:eigen_negative_par} and \eqref{eq:op_negative_par}, it is enough to consider the case where $q \geq 0$.

For all $N \in \NN \setminus \{0\}$, $N$-times differentiation of the eigenvalue equation gives
\begin{equation}\label{eq:diff_eigenvalue_eq}
\sum_{j=0}^{N-1} \binom{N}{j} [\partial_q^{N-j} (\opML_q-\lambda_k^q)] \partial_q^j H_k^q + (\opML_q - \lambda_k^q) \partial_q^N H_k^q = 0;
\end{equation}
here $\partial_q^{m} (\opML_q-\lambda_k^q)$ denotes the multiplication operator by $\partial_q^m (q \sin^2 \phi - \lambda_k^q)$ for all $m > 0$. This tells us that the first summand $\sum_{j=0}^{N-1} \binom{N}{j} [\partial_q^{N-j} (\opML_q-\lambda_k^q)] \partial_q^j H_k^q$ is in the range of $\opML_q - \lambda_k^q$, so it is orthogonal to the kernel $\CC H_k^q$ of $\opML_q -\lambda_k^q$. Moreover $\opML_q - \lambda_0^k$ restricted to $(H_0^q)^\perp$ is invertible; hence \eqref{eq:diff_eigenvalue_eq} allows us to determine the component of $\partial_q^N H_k$ orthogonal to $H_k^q$. On the other hand, $N$-times differentiation of the $L^2$-normalization equation
\begin{equation}\label{eq:normalization}
\|H_k^q\|_2^2 = \langle H_k^q, H_k^q \rangle = 1
\end{equation}
gives that
\[
2 \langle \partial_q^N H_k^q, H_k^q \rangle + \sum_{j=1}^{N-1} \binom{N}{j} \langle \partial_q^j H_k^q, \partial_q^{N-j} H_k^q \rangle = 0,
\]
which allows us to determine the component of $\partial_q^N H_k^q$ along $H_k^q$. In conclusion
\[\begin{split}
\partial_q^N H_k^q = &- \frac{1}{2} \sum_{j=1}^{N-1} \binom{N}{j} \langle \partial_q^j H_k^q, \partial_q^{N-j} H_k^q \rangle H_k^q \\
&- (\opML_q - \lambda_k^q)|_{(H_k^q)^\perp}^{-1} \sum_{j=0}^{N-1} \binom{N}{j} [\partial_q^{N-j} (\opML_q-\lambda_k^q)] \partial_q^j H_k^q.
\end{split}\]

Note now that $\partial_q (\opML_q -\lambda_k^q) = \sin^2 \phi - \partial_q \lambda_k^q$ and $\partial_q^m  (\opML_q -\lambda_k^q) = -\partial_q^m \lambda_k^q$ for $m > 1$.
The last equation gives, for all $N \in \NN \setminus \{0\}$, the estimate
\begin{equation}\label{eq:est_der_eigenf}
\begin{split}
\|\partial_q^N H_k^q\|_2 
&\lesssim_{N,k} \sum_{j=1}^{N-1} \|\partial_q^j H_k^q\|_2 \|\partial_q^{N-j} H_k^q\|_2 \\
&\qquad + \sum_{j=0}^{N-1} (1+ |\partial_q^{N-j} \lambda_k^q|) \, \|\partial_q^j H_k^q\|_2,
\end{split}
\end{equation}
where we have also used the fact that $\inf_{m \tc m \neq k}|\lambda_m^q-\lambda_k^q| \gtrsim_k (1+q)^{1/2} \geq 1$ by \eqref{eq:asymp_eigen}.
Moreover differentiation of the formula \eqref{eq:derivative_eigen_k} for $\partial_q \lambda_k^q$ gives, for all $N \in \NN$,
\[
\partial_q^{N+1} \lambda_k^q = \sum_{j=0}^N \binom{N}{j} \int_\TT \sin^2 \phi \, \partial_q^j H_k^q(\phi) \, \partial_q^{N-j} H_k^q(\phi) \,d\phi,
\]
which yields
\begin{equation}\label{eq:est_der_eigenv}
|\partial_q^{N+1} \lambda_k^q| \lesssim_N \sum_{j=0}^N \|\partial_q^j H_k^q\|_2 \|\partial_q^{N-j} H_k^q\|_2.
\end{equation}
Finally, note that,
by \eqref{eq:asymp_eigen_est},
\begin{equation}\label{eq:est_first_eigenv}
\lambda_k^q \lesssim_k (1+q)^{1/2}.
\end{equation}

The conclusion follows inductively, by repeated application of the estimates  \eqref{eq:normalization}, \eqref{eq:est_der_eigenf}, \eqref{eq:est_der_eigenv},and \eqref{eq:est_first_eigenv}.
\end{proof}


\begin{thebibliography}{10}

\bibitem{alexopoulos_spectral_1994}
G.~Alexopoulos, \emph{Spectral multipliers on {L}ie groups of polynomial
  growth}, Proc. Amer. Math. Soc. \textbf{120} (1994), no.~3, 973--979.

\bibitem{arscott_periodic_1964}
F.~M. Arscott, \emph{Periodic differential equations. {A}n introduction to
  {M}athieu, {L}am\'e, and allied functions}, International Series of
  Monographs in Pure and Applied Mathematics, Vol. 66. A Pergamon Press Book,
  The Macmillan Co., New York, 1964.

\bibitem{astengo_gelfand_2007}
F.~Astengo, B.~Di~Blasio, and F.~Ricci, \emph{Gelfand transforms of polyradial
  {S}chwartz functions on the {H}eisenberg group}, J. Funct. Anal. \textbf{251}
  (2007), no.~2, 772--791.

\bibitem{astengo_gelfand_2009}
F.~Astengo, B.~Di~Blasio, and F.~Ricci, \emph{Gelfand pairs on the {H}eisenberg group and {S}chwartz
  functions}, J. Funct. Anal. \textbf{256} (2009), no.~5, 1565--1587.

\bibitem{astengo_fourier_2013}
F.~Astengo, B.~Di~Blasio, and F.~Ricci, \emph{Fourier transform of {S}chwartz functions on the {H}eisenberg
  group}, Studia Math. \textbf{214} (2013), no.~3, 201--222.

\bibitem{baudoin_subelliptic_2015}
F.~Baudoin and M.~Cecil, \emph{The subelliptic heat kernel on the
  three-dimensional solvable {L}ie groups}, Forum Math. \textbf{27} (2015),
  2051--2086.

\bibitem{benson_spherical_1998}
C.~Benson, J.~Jenkins, and G.~Ratcliff, \emph{The spherical transform of a
  {S}chwartz function on the {H}eisenberg group}, J. Funct. Anal. \textbf{154}
  (1998), no.~2, 379--423.

\bibitem{breuillard_geometry_2014}
E.~Breuillard, \emph{Geometry of locally compact groups of polynomial growth
  and shape of large balls}, Groups Geom. Dyn. \textbf{8} (2014), no.~3,
  669--732.

\bibitem{christ_multipliers_1991}
M.~Christ, \emph{{$L^p$} bounds for spectral multipliers on nilpotent groups},
  Trans. Amer. Math. Soc. \textbf{328} (1991), no.~1, 73--81.

\bibitem{coddington_theory_1955}
E.~A. Coddington and N.~Levinson, \emph{Theory of ordinary differential
  equations}, McGraw-Hill Book Company, Inc., New York-Toronto-London, 1955.

\bibitem{de_michele_multipliers_1987}
L.~De~Michele and G.~Mauceri, \emph{{$H^p$} multipliers on stratified groups}, Ann. Mat. Pura Appl.
  (4) \textbf{148} (1987), 353--366.

\bibitem{dixmier_vonneumann_1981}
J.~Dixmier, \emph{von {N}eumann algebras}, North-Holland Mathematical Library,
  vol.~27, North-Holland Publishing Co., Amsterdam-New York, 1981, With a
  preface by E. C. Lance, Translated from the second French edition by F.
  Jellett.

\bibitem{dooley_extension_1984}
A.~H. Dooley and G.~I. Gaudry, \emph{An extension of de{L}eeuw's theorem to the
  $n$-dimensional rotation group}, Ann. Inst. Fourier (Grenoble) \textbf{34}
  (1984), 111--135.

\bibitem{ferrari_ruffino_topology_2007}
F.~Ferrari~Ruffino, \emph{The topology of the spectrum for {G}elfand pairs on
  {L}ie groups}, Boll. Unione Mat. Ital. Sez. B Artic. Ric. Mat. (8)
  \textbf{10} (2007), no.~3, 569--579.

\bibitem{fischer_gelfand_2009}
V.~Fischer and F.~Ricci, \emph{Gelfand transforms of {${\rm SO}(3)$}-invariant
  {S}chwartz functions on the free group {$N_{3,2}$}}, Ann. Inst. Fourier
  (Grenoble) \textbf{59} (2009), no.~6, 2143--2168.

\bibitem{fischer_nilpotent_2012}
V.~Fischer, F.~Ricci, and O.~Yakimova, \emph{Nilpotent {G}elfand pairs and
  spherical transforms of {S}chwartz functions {I}: rank-one actions on the
  centre}, Math. Z. \textbf{271} (2012), no.~1-2, 221--255.

\bibitem{fischer_nilpotent_preprint2012}
V.~Fischer, F.~Ricci, and O.~Yakimova, \emph{{Nilpotent Gelfand pairs and
  spherical transforms of Schwartz functions III. Isomorphisms between Schwartz
  spaces under Vinberg's condition}} (2012), \texttt{arXiv:1210.7962}.

\bibitem{fischer_nilpotent_2013}
V.~Fischer, F.~Ricci, and O.~Yakimova, \emph{Nilpotent {G}elfand pairs and
  spherical transforms of {S}chwartz functions {II}: {T}aylor expansions on
  singular sets}, Lie groups: structure, actions, and representations, Progr.
  Math., vol. 306, Birkh\"auser/Springer, New York, 2013, pp.~81--112.

\bibitem{fischer_nilpotent_preprint2017}
V.~Fischer, F.~Ricci, and O.~Yakimova, \emph{{Nilpotent Gelfand pairs and
  Schwartz extensions of spherical transforms via quotient pairs}}, J. Funct.
  Anal. \textbf{274} (2018), no.~4, 1076--1128.

\bibitem{folland_hardy_1982}
G.~B. Folland and E.~M. Stein, \emph{Hardy spaces on homogeneous groups},
  Mathematical Notes, vol.~28, Princeton University Press, Princeton, N.J.,
  1982.

\bibitem{folland_course_1995}
G.~B. Folland, \emph{A course in abstract harmonic analysis}, Studies in
  Advanced Mathematics, CRC Press, Boca Raton, FL, 1995.

\bibitem{folland_introduction_1995}
G.~B. Folland, \emph{Introduction to partial differential equations}, second ed.,
  Princeton University Press, Princeton, NJ, 1995.

\bibitem{geller_fourier_1977}
D.~Geller, \emph{Fourier analysis on the {H}eisenberg group}, Proc. Nat. Acad.
  Sci. U.S.A. \textbf{74} (1977), no.~4, 1328--1331.

\bibitem{geller_fourier_1980}
D.~Geller, \emph{Fourier analysis on the {H}eisenberg group. {I}. {S}chwartz
  space}, J. Funct. Anal. \textbf{36} (1980), no.~2, 205--254.

\bibitem{guivarch_croissance_1973}
Y.~Guivarc'h, \emph{Croissance polynomiale et p\'eriodes des fonctions
  harmoniques}, Bull. Soc. Math. France \textbf{101} (1973), 333--379.

\bibitem{hewitt_abstract_1979}
E.~Hewitt and K.~A. Ross, \emph{Abstract harmonic analysis. {V}ol. {I}}, second
  ed., Grundlehren der Mathematischen Wissenschaften [Fundamental Principles of
  Mathematical Sciences], vol. 115, Springer-Verlag, Berlin, 1979, Structure of
  topological groups, integration theory, group representations.

\bibitem{hochschild_structure_1965}
G.~Hochschild, \emph{The structure of {L}ie groups}, Holden-Day, Inc., San
  Francisco-London-Amsterdam, 1965.

\bibitem{hulanicki_functional_1984}
A.~Hulanicki, \emph{A functional calculus for {R}ockland operators on nilpotent
  {L}ie groups}, Studia Math. \textbf{78} (1984), no.~3, 253--266.

\bibitem{hulanicki_almost_1983}
A.~Hulanicki and J.~W. Jenkins, \emph{Almost everywhere summability on nilmanifolds},
  Trans. Amer. Math. Soc. \textbf{278} (1983), no.~2, 703--715.

\bibitem{ince_mathieu_1927}
E.~L. Ince, \emph{The {M}athieu equation with numerically large parameters}, J.
  London Math. Soc. (1) \textbf{2} (1927), no.~1, 46.


\bibitem{ludwig_algebre_1995}
J.~Ludwig and C.~Molitor-Braun, \emph{Alg\`ebre de {S}chwartz d'un groupe de
  {L}ie nilpotent}, Travaux math\'ematiques, {F}asc.\ {VII}, S\'em. Math.
  Luxembourg, Centre Univ. Luxembourg, Luxembourg, 1995, pp.~25--67.

\bibitem{ludwig_fourier_2007}
J.~Ludwig, C.~Molitor-Braun, and L.~Scuto, \emph{On {F}ourier's inversion
  theorem in the context of nilpotent {L}ie groups}, Acta Sci. Math. (Szeged)
  \textbf{73} (2007), no.~3-4, 547--591.

\bibitem{ludwig_sub-laplacians_2000}
J.~Ludwig and D.~M{\"u}ller, \emph{Sub-{L}aplacians of holomorphic {$L^p$}-type
  on rank one {$AN$}-groups and related solvable groups}, J. Funct. Anal.
  \textbf{170} (2000), no.~2, 366--427.


\bibitem{martini_multipliers_2010}
A.~Martini, \emph{{Algebras of differential operators on Lie groups and
  spectral multipliers}}, {Tesi di perfezionamento (PhD thesis)}, Scuola
  Normale Superiore, Pisa, 2010, \texttt{arXiv:1007.1119}.

\bibitem{martini_spectral_2011}
A.~Martini, \emph{{Spectral theory for commutative algebras of differential
  operators on Lie groups}}, J. Funct. Anal. \textbf{260} (2011), no.~9,
  2767--2814.

\bibitem{martini_crsphere}
A.~Martini, \emph{{Joint functional calculi and a sharp multiplier theorem for the
  Kohn Laplacian on spheres}}, Math. Z. \textbf{286} (2017), 1539--1574.

\bibitem{meixner_mathieusche_1954}
J.~Meixner and F.~W. Sch\"afke, \emph{Mathieusche {F}unktionen und
  {S}ph\"aroidfunktionen mit {A}nwendungen auf physikalische und technische
  {P}robleme}, Die Grundlehren der mathematischen Wissenschaften in
  Einzeldarstellungen mit besonderer Ber\"ucksichtigung der Anwendungsgebiete,
  Band LXXI, Springer-Verlag, Berlin-G\"ottingen-Heidelberg, 1954.

\bibitem{mller_restriction_1990}
D.~M{\"u}ller, \emph{A restriction theorem for the {H}eisenberg group}, Ann. of
  Math. (2) \textbf{131} (1990), no.~3, 567--587.

\bibitem{nelson_representation_1959}
E.~Nelson and W.~F. Stinespring, \emph{Representation of elliptic operators in
  an enveloping algebra}, Amer. J. Math. \textbf{81} (1959), 547--560.

\bibitem{schweitzer_dense_1993}
L.~B. Schweitzer, \emph{Dense {$m$}-convex {F}r\'echet subalgebras of operator
  algebra crossed products by {L}ie groups}, Internat. J. Math. \textbf{4}
  (1993), no.~4, 601--673.

\bibitem{tolomeo_tesilaurea_2015}
L.~Tolomeo, \emph{{Misure di Plancherel associate a sub-laplaciani su gruppi di
  Lie}}, Tesi di laurea magistrale, Universit\`a di Pisa, 2015.

\bibitem{varadarajan_lie_1974}
V.~S. Varadarajan, \emph{Lie groups, {L}ie algebras, and their
  representations}, Prentice-Hall Inc., Englewood Cliffs, N.J., 1974,
  Prentice-Hall Series in Modern Analysis.

\bibitem{varopoulos_analysis_1992}
N.~T. Varopoulos, L.~Saloff-Coste and T.~Coulhon, \emph{Analysis and geometry
  on groups}, Cambridge University Press, Cambridge, 1992.
	
\bibitem{whitney_differentiable_1943}
H.~Whitney, \emph{Differentiable even functions}, Duke Math. J. \textbf{10}
  (1943), 159--160.

\bibitem{wolf_mathieu_2010}
G.~Wolf, \emph{Mathieu functions and {H}ill's equation}, NIST Handbook of
  Mathematical Functions, National Institute of Standards and Technology and
  Cambridge University Press, New York, 2010, pp.~651--682.

\bibitem{wolf_harmonic_2007}
J.~A. Wolf, \emph{Harmonic analysis on commutative spaces}, Mathematical
  Surveys and Monographs, vol. 142, American Mathematical Society, Providence,
  RI, 2007.

\end{thebibliography}
\end{document}